\theoremstyle{definition}
\newtheorem{thm}{Theorem}[section]
\newtheorem{definition}[thm]{Definition}
\newtheorem{lemma}[thm]{Lemma}
\newtheorem{cor}[thm]{Corollary}
\newtheorem{claim}[thm]{Claim}
\newtheorem{remark}[thm]{Remark}
\newtheorem{example}[thm]{Example}
\newtheorem*{embeddinglemmatemplate}{Embedding Lemma Template}
\newtheorem{fact}[thm]{Fact}
\newcommand\N{\mathbb{N}}
\newcommand\Z{\mathbb{Z}}
\newcommand\Q{\mathbb{Q}}
\newcommand\R{\mathbb{R}}
\DeclareMathOperator\id{id}
\newcommand\T{\mathbb{T}}
\newcommand\Tlog{\mathbb{T}_{\log}}
\DeclareMathOperator\SPAN{span}
\DeclareMathOperator\sign{sign}
\newcommand\Gammalog{\Gamma_{\text{log}}}
\DeclareMathOperator\image{image}
\author{Allen Gehret}
\title{The Asymptotic Couple of the Field of Logarithmic Transseries}
\email{agehret2@illinois.edu}
\address{Department of Mathematics, University of Illinois at Urbana-Champaign, Urbana, Illinois 61801}
\date{\today}
\keywords{Asymptotic Couples; Logarithmic Transseries; Quantifier Elimination}
\begin{document}

\maketitle

\begin{abstract}
The derivation on the differential-valued field $\mathbb{T}_{\log}$ of logarithmic transseries induces on its value group $\Gamma_{\log}$ a certain map $\psi$. The structure $(\Gamma_{\log},\psi)$ is a divisible asymptotic couple. We prove that the theory $T_{\log} = {\rm Th}(\Gamma_{\log},\psi)$ admits elimination of quantifiers in a natural first-order language. All models $(\Gamma,\psi)$ of $T_{\log}$ have an important discrete subset $\Psi:=\psi(\Gamma\setminus\{0\})$. We give explicit descriptions of all definable functions on $\Psi$ and prove that $\Psi$ is stably embedded in $\Gamma$. 
\end{abstract}

\tableofcontents

\section{Introduction}
\label{introduction}

\medskip\noindent
The differential-valued field $\T_{\log}$
of logarithmic transseries is conjectured to have good model theoretic properties. As a partial result in this direction, and as a confidence building measure we prove here that at least its
{\em asymptotic couple\/} has a good model theory: quantifier elimination, and
stable embeddedness of a certain discrete part. We now describe
the relevant objects and results in more detail.

\medskip\noindent
Throughout, $m$ and $n$ range over $\N=\{0,1,2,\dots\}$. 
See \cite{Oleron} for a definition of the differential-valued
field $\T_{\log}$ of logarithmic transseries.
It is a field extension of $\R$ containing elements
$\ell_0, \ell_1, \ell_2,\ldots$, to be thought of as 
$x, \log x, \log \log x,\ldots$, and 
the elements of 
$\T_{\log}$ are formal series with
real coefficients and monomials $ \ell_0^{r_0}\ell_1^{r_1}\cdots \ell_n^{r_n}$ (with arbitrary real exponents $r_0,\dots, r_n)$.
For our purpose it 
is enough to know the following four things about $\T_{\log}$, its elements $\ell_n$, and these monomials: \begin{enumerate} 
\item These monomials are the elements of a subgroup $\frak{L}$ of the multiplicative group of $\T_{\log}$, and their products
are formed in the way suggested by their notation as power products. The elements of $\frak{L}$ are also
known as {\em logarithmic monomials}. For $m\le n$ we
have $\ell_m=\ell_0^{r_0}\cdots\ell_n^{r_n}$ where $r_i=0$ for all $i\ne m$ and $r_m=1$. 
\item The field $\T_{\log}$ is equipped with a (Krull) valuation $v$
that maps the group $\frak{L}$ isomorphically onto the (additively written) value group 
$v(\T_{\log}^\times)=\bigoplus_n \R e_n$, a vector space over $\R$ with basis $(e_n)$, with
$$v(\ell_0^{r_0}\ell_1^{r_1}\cdots \ell_n^{r_n})\ = -\ r_0e_0 - \cdots - r_ne_n,$$
and made into an ordered group by requiring for nonzero
$\sum_i r_i e_i$ that
$$\sum r_i e_i >0\ \Longleftrightarrow\ r_n>0\  \text{\ for the least $n$ such that $r_n\ne 0$}. $$
\item The field $\T_{\log}$ is equipped with a derivation such that $\ell_0'=1, \ell_1'=\ell_0^{-1},$ and in general $\ell_n^\dagger=\ell_0^{-1}\cdots\ell_n^{-1}$. Here $f^\dagger:= f'/f$ denotes the logarithmic derivative
of a nonzero element $f$ of a differential field, obeying the
useful identity $(fg)^\dagger=f^\dagger + g^\dagger$.  
In $\T_{\log}$,
$$  (\ell_0^{r_0}\ell_1^{r_1}\cdots \ell_n^{r_n})^\dagger\ =\ r_0\ell_0^{-1} +r_1\ell_0^{-1}\ell_1^{-1}+\cdots + r_n\ell_0^{-1}\cdots \ell_n^{-1}.$$
\item This derivation has the property that for nonzero 
$f\in \T_{\log}$ with $v(f)\ne 0$, the value $v(f')$, and thus
$v(f^\dagger)$, depends only on $v(f)$.
\end{enumerate}
Let $\Gamma_{\log}$ be the above ordered abelian group 
$\bigoplus_n \R e_n$. For an arbitrary ordered abelian group $\Gamma$ we set
$\Gamma^{\ne}:= \Gamma\setminus \{0\}$. 
By (4) the derivation of $\T_{\log}$ induces maps 
$$\gamma \mapsto \gamma' \ \text{ and }\ \gamma \mapsto \gamma^{\dagger}\ \colon\ \Gamma_{\log}^{\ne}\to \Gamma_{\log}$$
as follows: if $\gamma=v(f)\ne 0$ with
$f\in \T_{\log}^\times$, then $\gamma'=v(f')$ and
$\gamma^\dagger=v(f^\dagger)$. We have 
$\gamma'=\gamma + \gamma^\dagger$
for $\gamma\in \Gamma_{\log}^{\ne}$, and we follow Rosenlicht~\cite{differentialvaluationII}
in taking the function 
$$\psi: \Gamma_{\log}^{\ne} \to \Gamma_{\log}, \qquad \psi(\gamma):= \gamma^\dagger$$
as a new primitive, calling the pair $(\Gamma_{\log}, \psi)$ the \textbf{asymptotic
couple of $\T_{\log}$}. 

\medskip\noindent
More generally, an \textbf{asymptotic couple} is a pair
$(\Gamma, \psi)$ where $\Gamma$ is an ordered abelian group and
$\psi: \Gamma^{\ne} \to \Gamma$ satisfies for all $\alpha,\beta\in\Gamma^{\neq}$,
\begin{itemize}
\item[(AC1)] $\alpha+\beta\neq 0 \Longrightarrow \psi(\alpha+\beta)\geq \min(\psi(\alpha),\psi(\beta))$;
\item[(AC2)] $\psi(r\alpha) = \psi(\alpha)$ for all $r\in\Z^{\neq}$, in particular, $\psi(-\alpha) = \psi(\alpha)$;
\item[(AC3)] $\alpha>0 \Longrightarrow \alpha+\psi(\alpha)>\psi(\beta)$.
\end{itemize}
If in addition for all $\alpha,\beta\in\Gamma$,
\begin{itemize}
\item[(HC)] $0<\alpha\leq\beta\Rightarrow \psi(\alpha)\geq \psi(\beta)$,
\end{itemize}
then $(\Gamma,\psi)$ is said to be of \textbf{$H$-type}, or to be an \textbf{$H$-asymptotic couple}. 

\medskip\noindent
The notion of asymptotic couple is due to Rosenlicht~\cite{differentialvaluationII} who focused on the
case where $\Gamma$ has finite rank as an abelian group or is finite-dimensional as a vector space over $\Q$ or $\R$. The asymptotic couple
$(\Gamma_{\log}, \psi)$ is of $H$-type, infinite-dimensional as 
vector space over $\R$, and the ordered subset 
$\Psi:=\psi(\Gamma_{\log}^{\ne})$ of $\Gamma_{\log}$ is isomorphic to $(\N; <)$. 
We determine here the elementary (i.e., first-order) theory of
$(\Gamma_{\log}, \psi)$, provide a quantifier elimination result in a natural language, and show that the induced structure on the set $\Psi$
is just its structure as an ordered subset
of $\Gamma_{\log}$ (so $\Psi$ is stably embedded
in $(\Gamma_{\log}, \psi)$).

\medskip\noindent
This paper is in the spirit of~\cite{closedasymptoticcouples}, which proves a quantifier elimination result for so-called \emph{closed asymptotic couples}. That paper did for the asymptotic couple of the field $\T$ of logarithmic-exponential transseries what is done here for the asymptotic couple of $\T_{\log}$. For an explicit construction of $\T$, see~\cite{logarithmicexponentialseries}. The main difficulty in getting QE, compared to
\cite{closedasymptoticcouples}, was to find the right extra primitives, and to establish a 
new Embedding Lemma ~\ref{Zinmiddle}. Our choice of primitives here yields a {\em universal\/} theory with QE. This makes some things simpler than in \cite{closedasymptoticcouples}, and has various other
benefits, as we shall see.

\subsection{Conventions}
\label{conventions}
By  ``ordered set'' we mean ``totally ordered set''.

\medskip\noindent
Let $S$ be an ordered set. Below, the ordering on $S$ will be denoted by $\leq$, and a subset of $S$ is viewed as ordered by the induced ordering. We put $S_{\infty}:= S\cup\{\infty\}$, $\infty\not\in S$, with the ordering on $S$ extended to a (total) ordering on $S_{\infty}$ by $S<\infty$. Occasionally, we even take two distinct elements $-\infty,\infty\not\in S$, and extend the ordering on $S$ to an ordering on $S\cup\{-\infty,\infty\}$ by $-\infty<S<\infty$. Suppose that $B$ is a subset of $S$. We put $S^{>B}:=\{s\in S:s>b\text{ for every $b\in B$}\}$ and we denote $S^{>\{a\}}$ as just $S^{>a}$; similarly for $\geq, <,$ and $\leq$ instead of $>$. For $a,b\in S\cup\{-\infty,\infty\}$ and $B\subseteq S$ we put
\[
[a,b]_{B}:=\{x\in B: a\leq x\leq b\}.
\]
If $B=S$, then we usually write $[a,b]$ instead of $[a,b]_S$. A subset $A$ of $S$ is said to be a \textbf{cut} in $S$, or \textbf{downward closed} in $S$, if for all $a\in A$ and $s\in S$ we have $s<a\Rightarrow s\in A$. We say that an element $x$ of an ordered set extending $S$ \textbf{realizes} the cut $A$ if $A = S^{<x}$. We say that $S$ is a \textbf{successor set} if every element $x\in S$ has an \textbf{immediate successor} $y\in S$, that is, $x<y$ and for all $z\in S$, if $x<z$, then $y\leq z$. For example, $\N$ and $\Z$ with their usual ordering are successor sets.

\medskip\noindent
Suppose that $G$ is an ordered abelian group. Then we set $G^{\neq}:=G\setminus\{0\}$. Also, $G^{<}:= G^{<0}$; similarly for $\geq,\leq,$ and $>$ instead of $<$. We define $|g| := \max\{g,-g\}$ for $g\in G$. For $a\in G$, the \textbf{archimedean class} of $a$ is defined by
\[
[a] := \{g\in G: |a|\leq n|g|\text{ and }|g|\leq n|a|\text{ for some }n\geq 1\}.
\]
The archimedean classes partition $G$. Each archimedean class $[a]$ with $a\neq 0$ is the disjoint union of the two convex sets $[a]\cap G^{<}$ and $[a]\cap G^{>}$. We order the set $[G]:=\{[a]:a\in G\}$ of archimedean classes by
\[
[a]<[b] :\Longleftrightarrow n|a|<|b|\text{ for all }n\geq 1.
\]
We have $[0]<[a]$ for all $a\in G^{\neq}$, and
\[
[a]\leq[b] :\Longleftrightarrow |a|\leq n|b| \text{ for some } n\geq 1.
\]
We say that $G$ is \textbf{archimedean} if $[G^{\neq}]:=[G]\setminus\{[0]\}$ is a singleton.


\section{Abstract Asymptotic Couples}
\label{asymptoticcouples}

\noindent
In this section we recall the basic theory of asymptotic couples, as defined in Section~\ref{introduction}. We conclude the section with an important example $(\Gamma_{\log}^{\Q},\psi)$, which will turn out to be a prime model of our theory $T_{\log}$. This example is essentially the same as $(\Gamma_{\log},\psi)$, except with $\Q$ everywhere instead of $\R$.

\medskip\noindent
\emph{Let $(\Gamma,\psi)$ be an asymptotic couple} (not necessarily of $H$-type). By convention we extend $\psi$ to all of $\Gamma$ by setting $\psi(0):=\infty$. Then $\psi(\alpha+\beta)\geq \min(\psi(\alpha),\psi(\beta))$ holds for all $\alpha,\beta\in\Gamma$, and $\psi:\Gamma\to\Gamma_{\infty}$ is a (non-surjective) valuation on the abelian group $\Gamma$. In particular, the following is immediate:

\begin{fact}
\label{valuationfact}
If $\alpha,\beta\in\Gamma$ and $\psi(\alpha)<\psi(\beta)$, then $\psi(\alpha+\beta)=\psi(\alpha)$.
\end{fact}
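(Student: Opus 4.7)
The plan is to derive this from the ultrametric inequality $\psi(x+y)\geq \min(\psi(x),\psi(y))$, which (after the convention $\psi(0):=\infty$) now holds for all $x,y\in\Gamma$, together with the identity $\psi(-\beta)=\psi(\beta)$ coming from (AC2). The usual valuation-theoretic trick applies: write $\alpha$ as $(\alpha+\beta)+(-\beta)$ and feed this into the ultrametric inequality.

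Concretely, I would first note $\psi(\alpha+\beta)\geq \min(\psi(\alpha),\psi(\beta)) = \psi(\alpha)$ from the hypothesis $\psi(\alpha)<\psi(\beta)$. Then I would apply the ultrametric inequality again to the decomposition $\alpha = (\alpha+\beta) + (-\beta)$, obtaining
\[
\psi(\alpha)\ \geq\ \min\bigl(\psi(\alpha+\beta),\psi(-\beta)\bigr)\ =\ \min\bigl(\psi(\alpha+\beta),\psi(\beta)\bigr).
\]
If this minimum were $\psi(\beta)$, we would get $\psi(\alpha)\geq\psi(\beta)$, contradicting $\psi(\alpha)<\psi(\beta)$. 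Hence the minimum is $\psi(\alpha+\beta)$, giving $\psi(\alpha)\geq\psi(\alpha+\beta)$. Combined with the previous inequality this forces $\psi(\alpha+\beta)=\psi(\alpha)$.

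There is essentially no obstacle here: the statement is the standard fact that a non-archimedean valuation is determined by its smaller term, and the only subtlety is remembering that after extending $\psi$ by $\psi(0):=\infty$, the ultrametric inequality is available without side conditions, and that (AC2) supplies $\psi(-\beta)=\psi(\beta)$ so that the symmetric decomposition argument goes through.
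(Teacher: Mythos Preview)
Your argument is correct and is exactly the standard valuation-theoretic proof; the paper itself does not spell this out but simply declares the fact ``immediate'' from the observation that (after setting $\psi(0):=\infty$) the map $\psi$ is a valuation on $\Gamma$. You have supplied precisely the routine unpacking of that remark.
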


\medskip\noindent
For $\alpha\in\Gamma^{\neq}$ we shall also use the following notation:
\[
\alpha^{\dagger}:= \psi(\alpha), \quad \alpha':=\alpha+\psi(\alpha).
\]
The following subsets of $\Gamma$ play special roles:
\[
(\Gamma^{\neq})' := \{\gamma':\gamma\in\Gamma^{\neq}\}, \quad (\Gamma^{>})' := \{\gamma':\gamma\in\Gamma^{>}\},
\]
\[
\Psi := \psi(\Gamma^{\neq}) = \{\gamma^{\dagger}:\gamma\in\Gamma^{\neq}\} = \{\gamma^{\dagger}:\gamma\in\Gamma^{>}\}.
\]

\noindent
For an arbitrary asymptotic couple $(\Gamma',\psi')$ we may occasionally refer to the set $\psi'((\Gamma')^{\neq})$ as ``the $\Psi$-set of $(\Gamma',\psi')$''.

\medskip\noindent
We say that an asymptotic couple $(\Gamma,\psi)$ has \textbf{asymptotic integration} if
\[
\Gamma = (\Gamma^{\neq})'.
\]
Note that by AC3 we have $\Psi<(\Gamma^{>})'$.

\medskip\noindent
The following is~\cite[Proposition 3.1]{liouville} and generalizes~\cite[Proposition 3.1]{closedasymptoticcouples}. We repeat the proof here.

\begin{lemma}
\label{gaplemma}There is at most one $\beta$ such that
\[
\Psi<\beta<(\Gamma^{>})'.
\]
If $\Psi$ has a largest element, there is no such $\beta$.
\end{lemma}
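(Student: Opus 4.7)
The plan is to prove both statements by essentially the same device: given a putative $\beta$ in the gap, construct a positive element $\delta$ whose ``integral'' $\delta' = \delta + \psi(\delta)$ falls below $\beta$, violating $\beta < (\Gamma^{>})'$. The key observation is that $\psi(\delta)$ lives in $\Psi$, hence is strictly below the gap, so adding $\delta$ to $\psi(\delta)$ cannot lift the result past $\beta$ whenever $\delta$ itself is small enough—and we arrange exactly that by defining $\delta$ as a difference across the relevant portion of the gap.

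For the first claim, I would suppose for contradiction that $\beta_1 < \beta_2$ both satisfy $\Psi < \beta_i < (\Gamma^{>})'$. Set $\delta := \beta_2 - \beta_1$, so $\delta > 0$ and hence $\delta' = \delta + \psi(\delta) \in (\Gamma^{>})'$. Since $\psi(\delta) \in \Psi$ and $\Psi < \beta_1$, I get
\[
\delta' \;=\; (\beta_2 - \beta_1) + \psi(\delta) \;<\; (\beta_2-\beta_1) + \beta_1 \;=\; \beta_2,
\]
contradicting $\beta_2 < (\Gamma^{>})'$.

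For the second claim, suppose $\Psi$ has a largest element $\alpha_0$, and that some $\beta$ satisfies $\Psi < \beta < (\Gamma^{>})'$. Set $\delta := \beta - \alpha_0 > 0$. Because $\psi(\delta) \in \Psi$, we have $\psi(\delta) \leq \alpha_0$, so
\[
\delta' \;=\; (\beta - \alpha_0) + \psi(\delta) \;\leq\; (\beta - \alpha_0) + \alpha_0 \;=\; \beta,
\]
again contradicting $\beta < (\Gamma^{>})'$.

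There is no real obstacle here: once one notices the idea of subtracting off either $\beta_1$ or the maximum of $\Psi$ to obtain a positive $\delta$ and then applying $\psi$, both estimates are one-line manipulations using only the definitions of $\Psi$, $(\Gamma^{>})'$, and the extension of $\psi$ to $0$ via $\psi(0)=\infty$ (which is not needed here since $\delta \neq 0$ in each case). The mild subtlety worth flagging is that the proof uses no axiom of asymptotic couples beyond $\Psi < (\Gamma^{>})'$ itself (which AC3 supplies), so the lemma is genuinely a statement about the abstract positioning of these two sets.
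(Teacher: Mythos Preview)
Your proof is correct and is essentially the same argument as the paper's: take the difference of the two relevant elements, observe that its $\psi$-value lies in $\Psi$ (hence at or below the lower endpoint), and conclude that its derivative falls at or below $\beta$, contradicting $\beta<(\Gamma^{>})'$. The only cosmetic difference is that the paper handles both claims in a single stroke by writing the hypothesis as $\Psi\le\alpha<\beta<(\Gamma^{>})'$ (so $\alpha$ is either a second gap element or $\max\Psi$), whereas you split into two cases.
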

\begin{proof}
If $\Psi\leq\alpha<\beta<(\Gamma^{>})'$, then $\gamma:=\beta-\alpha>0$ gives
\[
\gamma^{\dagger}\leq \alpha = \beta-\gamma<\gamma'-\gamma = \gamma^{\dagger},
\]
a contradiction.
\end{proof}

\begin{definition}
If $(\Gamma,\psi)$ contains an element $\beta$ as in Lemma~\ref{gaplemma}, then we say that \textbf{$(\Gamma,\psi)$ has a gap} and that $\beta$ is the \textbf{gap}.
\end{definition}

\medskip\noindent
The existence of gaps is part of an important trichotomy for $H$-asymptotic couples:

\begin{lemma}
\label{trichotomy}Suppose $(\Gamma,\psi)$ is of $H$-type. Then $(\Gamma,\psi)$ has exactly one of the following three properties:
\begin{enumerate}[(i)]
\item $(\Gamma,\psi)$ has a gap;
\item $\Psi$ has a largest element;
\item $\Gamma = (\Gamma^{\neq})'$, that is, $(\Gamma,\psi)$ has asymptotic integration.
\end{enumerate}
Moreover, $\Gamma$ has at most one element outside $(\Gamma^{\neq})'$.
\end{lemma}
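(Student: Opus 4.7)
The plan is to split into two parts: (A) verifying (i), (ii), (iii) are pairwise incompatible, and (B) establishing the ``moreover'' clause $|\Gamma \setminus (\Gamma^{\ne})'| \le 1$. Together these yield the trichotomy: if the complement is empty we are in case (iii); if it is a singleton $\{\delta\}$, the analysis in (B) forces $\delta$ to be either a gap or $\max \Psi$. The engines for (A) are two elementary observations: AC3 gives $\gamma > 0 \Rightarrow \gamma' > \Psi$, and for $\gamma < 0$, $\gamma' = \gamma + \psi(\gamma) < \psi(\gamma) \in \Psi$. Lemma \ref{gaplemma} handles (i) versus (ii). For (i) versus (iii), the gap $\beta$ lies strictly above $\Psi$, so $\beta > \psi(\gamma) > \gamma'$ for every $\gamma < 0$; combined with $\beta < (\Gamma^>)'$, this puts $\beta \notin (\Gamma^{\ne})'$. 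For (ii) versus (iii), $\mu := \max \Psi$ satisfies $\mu < \gamma'$ for $\gamma > 0$ and $\mu \ge \psi(\gamma) > \gamma'$ for $\gamma < 0$, so $\mu \notin (\Gamma^{\ne})'$.

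For (B), I first use HC to show $\gamma \mapsto \gamma'$ is strictly increasing on each of $\Gamma^>$ and $\Gamma^<$. Given $0 < \gamma_1 < \gamma_2$, AC3 with $\alpha = \gamma_2 - \gamma_1$ and $\beta = \gamma_1$ gives $(\gamma_2 - \gamma_1) + \psi(\gamma_2 - \gamma_1) > \psi(\gamma_1)$. When $\psi(\gamma_1) = \psi(\gamma_2)$ monotonicity is immediate; otherwise $\psi(\gamma_1) > \psi(\gamma_2)$ forces $[\gamma_1] < [\gamma_2]$ in $H$-type (same archimedean class would give the same $\psi$ value), which via $2\gamma_1 < \gamma_2$ yields $[\gamma_2 - \gamma_1] = [\gamma_2]$ and hence $\psi(\gamma_2 - \gamma_1) = \psi(\gamma_2)$; so AC3 becomes $(\gamma_2 - \gamma_1) + \psi(\gamma_2) > \psi(\gamma_1)$, i.e., $\gamma_1' < \gamma_2'$. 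Next, given $\delta \in \Gamma \setminus (\Gamma^{\ne})'$, I case-split: if $\Psi < \delta < (\Gamma^>)'$, then $\delta$ is the gap by Lemma \ref{gaplemma}; if $\delta > \Psi$ and $\alpha_0' \le \delta$ for some $\alpha_0 > 0$, I realize $\delta = \alpha_\delta'$ by a fixed-point construction (find $\sigma \in \Psi$ with $\psi(\delta - \sigma) = \sigma$ and set $\alpha_\delta := \delta - \sigma$), contradicting $\delta \notin (\Gamma^{\ne})'$; finally, if $\delta$ is not strictly above $\Psi$, a symmetric argument on $\Gamma^<$ realizes $\delta$ as $\gamma'$ for some $\gamma < 0$ (i.e., pick $\sigma \in \Psi$ with $\sigma > \delta$ and $\psi(\sigma - \delta) = \sigma$, taking $\gamma = \delta - \sigma < 0$), and the sole obstruction is $\delta = \max \Psi$ where no $\sigma \in \Psi$ with $\sigma > \delta$ exists.

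The main obstacle is the fixed-point construction of $\alpha_\delta$: showing $\psi(\delta - \sigma) = \sigma$ has a solution $\sigma \in \Psi$ whenever $\delta > \Psi$ is not a gap. The key leverage of $H$-type is that $\psi$ is constant on each archimedean class of $\Gamma^>$, so that $\gamma \mapsto \gamma'$ acts as a translation within each class; using the strict monotonicity above, one tracks the translates of successive classes to show they tile $\{x \in \Gamma : x > \Psi\}$, with the only possible omission being the unique gap permitted by Lemma \ref{gaplemma}.
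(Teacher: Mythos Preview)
The paper itself gives no self-contained argument here; it simply cites \cite[Lemma 3.1, Proposition 3.1]{closedasymptoticcouples}. Your part (A) is correct, as is the strict monotonicity of $\gamma\mapsto\gamma'$ on each of $\Gamma^{>}$ and $\Gamma^{<}$.

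The genuine gap is exactly where you flag ``the main obstacle'': you assert that whenever $\delta>\Psi$ is not a gap there exists $\sigma\in\Psi$ with $\psi(\delta-\sigma)=\sigma$, but you never prove it, and the ``tiling'' sketch in your last paragraph is not an argument. Concretely, take $0<\alpha<\beta$ with $[\alpha]<[\beta]$ and $\alpha'<\delta<\beta'$. The natural candidates $\gamma_1:=\delta-\psi(\alpha)$ and $\gamma_2:=\delta-\psi(\beta)$ satisfy $\alpha<\gamma_1\le\gamma_2<\beta$, but nothing forces $[\gamma_1]=[\alpha]$ or $[\gamma_2]=[\beta]$; when both fail one only obtains $\gamma_1'<\delta<\gamma_2'$ with $[\alpha]<[\gamma_1]\le[\gamma_2]<[\beta]$, and iterating yields a strictly shrinking chain of archimedean classes with no evident termination in a general $H$-asymptotic couple. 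Closing this is precisely the intermediate value property recorded in the paper as Lemma~\ref{ivp}, which is itself deferred to \cite{closedasymptoticcouples}; your outline does not supply that argument, and the symmetric case on $\Gamma^{<}$ has the same defect.
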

\begin{proof}
This follows from~\cite[Lemma 3.1, Proposition 3.1]{closedasymptoticcouples}. See also~\cite[Corollary 9.2.16]{adamtt}.
\end{proof}

\medskip\noindent
 Note that if $(\Gamma,\psi)$ is an $H$-asymptotic couple, then $\psi$ is constant on archimedean classes of $\Gamma$: for $\alpha,\beta\in\Gamma^{\neq}$ with $[\alpha] = [\beta]$ we have $\psi(\alpha) = \psi(\beta)$. The function $\id+\psi$ enjoys the following remarkable intermediate value property:

\begin{lemma}
\label{ivp}
Suppose $(\Gamma,\psi)$ is of $H$-type. Then the functions
\[
\gamma\mapsto \gamma':\Gamma^{>}\to\Gamma,\quad \gamma\mapsto\gamma':\Gamma^{<}\to\Gamma
\]
have the intermediate value property.
\end{lemma}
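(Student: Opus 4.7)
My plan is to first establish strict monotonicity of $\gamma \mapsto \gamma'$ on each of $\Gamma^{>}$ and $\Gamma^{<}$, which reduces the intermediate value property to a surjectivity claim, and then to derive that claim from the trichotomy of Lemma~\ref{trichotomy} combined with the location of any gap given by Lemma~\ref{gaplemma}.

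\emph{Monotonicity.} Given $0 < \alpha < \beta$, I would write $\beta' - \alpha' = (\beta - \alpha) + (\psi(\beta) - \psi(\alpha))$ and split into cases on archimedean class. If $[\alpha] = [\beta]$, then $\psi$ is constant on this class (by the remark just before the lemma) and $\beta' - \alpha' = \beta - \alpha > 0$. If $[\alpha] < [\beta]$, then $\delta := \beta - \alpha > 0$ lies in $[\beta]$, so $\psi(\delta) = \psi(\beta)$; applying AC3 to $\delta$ gives $\delta + \psi(\delta) > \psi(\alpha)$, which rearranges to $\beta' > \alpha'$. For $\alpha < \beta < 0$, since $|\alpha| > |\beta|$, only $[\alpha] = [\beta]$ or $[\alpha] > [\beta]$ can occur; class-constancy (together with AC2) or HC yields $\psi(\beta) \geq \psi(\alpha)$, and $\beta - \alpha > 0$ delivers $\beta' > \alpha'$.

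\emph{IVP on $\Gamma^{>}$.} For $a < b$ in $\Gamma^{>}$ and $c \in \Gamma$ with $a' < c < b'$, AC3 gives $c > a' > \Psi$. For any $\gamma \in \Gamma^{<}$ one has $\gamma' < \psi(\gamma) \in \Psi$, so $\gamma' \not> \Psi$; hence any solution of $\gamma' = c$ must be positive, and monotonicity then forces it to lie uniquely in $(a, b)$. Existence reduces to $c \in (\Gamma^{\neq})'$: by Lemma~\ref{trichotomy} at most one element of $\Gamma$ is outside $(\Gamma^{\neq})'$, and if one is, it is either a gap (which by Lemma~\ref{gaplemma} lies below $(\Gamma^{>})' \ni a'$) or $\max \Psi$ (which is $\leq \Psi < a'$); neither can equal $c$.

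\emph{IVP on $\Gamma^{<}$, and main obstacle.} Symmetrically, for $a < b < 0$ and $c \in (a', b')$, one has $c < b' < \psi(b) \in \Psi$, which rules out a positive solution to $\gamma' = c$; monotonicity places the unique negative solution in $(a, b)$, and existence follows because the potential missing element --- either the gap (strictly above $\Psi \ni \psi(b) > c$) or $\max \Psi$ (which lies above $b' > c$ since $b' < \psi(b) \leq \max \Psi$) --- cannot equal $c$. The main obstacle is conceptual rather than technical: recognizing that the trichotomy together with the location of the gap provides exactly the structural control needed to preclude ``missing'' values in the intervals $(a', b')$, bypassing a direct archimedean-class-by-class construction of $\gamma$ that would otherwise be considerably more painful.
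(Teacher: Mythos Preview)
Your monotonicity argument is clean and correct. The problem is the existence step: you invoke the ``moreover'' clause of Lemma~\ref{trichotomy}---that at most one element of $\Gamma$ lies outside $(\Gamma^{\neq})'$---to conclude that your target value $c$ lies in $(\Gamma^{\neq})'$. But that clause \emph{is} the global surjectivity content of the IVP you are trying to prove: asserting that $(\Gamma^{<})'\cup(\Gamma^{>})'$ misses at most one point of $\Gamma$ amounts (once you have Lemma~\ref{gaplemma}) to asserting that each of $(\Gamma^{<})'$ and $(\Gamma^{>})'$ is convex in $\Gamma$, which is exactly the intermediate value property. In this paper both Lemma~\ref{trichotomy} and Lemma~\ref{ivp} are established only by citation to~\cite{closedasymptoticcouples}; in that source the IVP material (``Lemma~2.2 and Property~(3), p.~320'') appears \emph{before} the trichotomy material (Lemma~3.1, Proposition~3.1), and the latter is derived from the former. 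So although you are formally respecting this paper's ordering of statements, once the citations are unwound you are assuming what you set out to prove.

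The paper's own ``proof'' is just the citation. A genuinely self-contained argument must actually exhibit $\gamma\in(a,b)$ with $\gamma'=c$, rather than appeal to a pre-packaged near-surjectivity of $\gamma\mapsto\gamma'$. Your monotonicity step is exactly the right opening move for such an argument; what is missing is the direct construction of the preimage.
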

\begin{proof}
\cite[Lemma 2.2 and Property (3), p. 320]{closedasymptoticcouples}. See also~\cite[Lemma 9.2.14]{adamtt}.
\end{proof}

\medskip\noindent
It is very useful to think of $H$-asymptotic couples in terms of the following geography:
\[
\Psi<\text{possible gap}<(\Gamma^{>})'
\]

\medskip\noindent
Let $(\Gamma,\psi)$ and $(\Gamma_1,\psi_1)$ be asymptotic couples. An \textbf{embedding}
\[
h:(\Gamma,\psi)\to(\Gamma_1,\psi_1)
\]
is an embedding $h:\Gamma\to\Gamma_1$ of ordered abelian groups such that
\[
h(\psi(\gamma)) = \psi_1(h(\gamma))\text{ for $\gamma\in\Gamma^{\neq}$.}
\]
If $\Gamma\subseteq\Gamma_1$ and the inclusion $\Gamma\to\Gamma_1$ is an embedding $(\Gamma,\psi)\to(\Gamma_1,\psi_1)$, then we call $(\Gamma_1,\psi_1)$ an \textbf{extension} of $(\Gamma,\psi)$.

\begin{definition}
Call an asymptotic couple $(\Gamma,\psi)$ \textbf{divisible} if the abelian group $\Gamma$ is divisible. If $(\Gamma,\psi)$ is a divisible asymptotic couple, then we construe $\Gamma$ as a vector space over $\Q$ in the obvious way.
\end{definition}

\medskip\noindent
As a torsion-free abelian group, we will consider $\Gamma$ as a subgroup of the divisible abelian group $\Q\Gamma:=\Q\otimes_{\Z}\Gamma$ via the embedding $\gamma\mapsto 1\otimes \gamma$. We also equip $\Q\Gamma$ with the unique linear order that makes it into an ordered abelian group containing $\Gamma$ as an ordered subgroup. By~\cite[Proposition 2.3(2)]{liouville}, $\psi$ extends uniquely to a map $(\Q\Gamma)^{\neq}\to\Q\Gamma$, also denoted by $\psi$, such that $(\Q\Gamma,\psi)$ is an asymptotic couple. We say that $(\Q\Gamma,\psi)$ is the \textbf{divisible hull} of $(\Gamma,\psi)$. Note that $\psi((\Q\Gamma)^{\neq}) = \Psi$ and $[\Q\Gamma] = [\Gamma]$. If $\dim_{\Q}\Q\Gamma$ is finite, then $\Psi = \psi(\Gamma^{\neq})$ is a finite set. We summarize this as follows:

\begin{lemma}
\label{divisibleclosure}Let $(\Gamma,\psi)$ be an asymptotic couple. Then $(\Q\Gamma,\psi)$ is an extension of $(\Gamma,\psi)$ such that
\begin{enumerate}
\item $(\Q\Gamma,\psi)$ is divisible,
\item $\psi((\Q\Gamma)^{\neq}) = \Psi$,
\item if $i:(\Gamma,\psi)\to(\Gamma_1,\psi_1)$ is an embedding and $(\Gamma_1,\psi_1)$ is divisible, then $i$ extends to a unique embedding $j:(\Q\Gamma,\psi)\to(\Gamma_1,\psi_1)$, and
\item if $(\Gamma,\psi)$ is of $H$-type, then so is $(\Q\Gamma,\psi)$.
\end{enumerate}
\end{lemma}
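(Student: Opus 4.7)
The proposal is to take the existence of an asymptotic couple structure on $\Q\Gamma$ (extending the one on $\Gamma$) as given from the cited \cite[Proposition 2.3(2)]{liouville}, and then verify the four listed properties in turn, leveraging (AC2) as the main tool to move $\psi$ past rational denominators. Item (1) is immediate, since $\Q\Gamma = \Q\otimes_\Z\Gamma$ is by construction a $\Q$-vector space and hence a divisible abelian group. For (2), I would note that every $\gamma\in(\Q\Gamma)^{\neq}$ can be written as $\alpha/n$ for some $\alpha\in\Gamma^{\neq}$ and $n\in\Z^{>}$; applying (AC2) in the asymptotic couple $(\Q\Gamma,\psi)$ gives $\psi(\gamma)=\psi(n\gamma)=\psi(\alpha)\in\Psi$, and the reverse inclusion $\Psi\subseteq\psi((\Q\Gamma)^{\neq})$ is trivial.

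For (3), the underlying map $j$ of ordered abelian groups is furnished by the standard universal property of the divisible hull of an ordered abelian group: set $j(\alpha/n):=i(\alpha)/n$, which is meaningful because $\Gamma_1$ is divisible; one checks this does not depend on the choice of representative $\alpha/n$, that it extends $i$, and that it is order-preserving (using that the ordering on $\Q\Gamma$ is the unique one extending that of $\Gamma$). Uniqueness of $j$ as an ordered group embedding is automatic from the fact that $\Q\Gamma=\bigcup_n\frac{1}{n}\Gamma$. The remaining content is that $j$ commutes with $\psi$; for $\gamma=\alpha/n$ with $\alpha\in\Gamma^{\neq}$, I would compute
\[
j(\psi(\gamma))=j(\psi(\alpha))=i(\psi(\alpha))=\psi_1(i(\alpha))=\psi_1(n\cdot j(\gamma))=\psi_1(j(\gamma)),
\]
using in turn (AC2) for $\Q\Gamma$, the fact that $j$ extends $i$, that $i$ is an embedding of asymptotic couples, the definition of $j$, and finally (AC2) for $\Gamma_1$.

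For (4), I would verify (HC) in $(\Q\Gamma,\psi)$ directly. Given $0<\alpha\leq\beta$ in $\Q\Gamma$, choose a common denominator $N\in\Z^{>}$ with $N\alpha,N\beta\in\Gamma$; since $\Gamma$ is an ordered subgroup of $\Q\Gamma$, we have $0<N\alpha\leq N\beta$ in $\Gamma$, so (HC) for $(\Gamma,\psi)$ gives $\psi(N\alpha)\geq\psi(N\beta)$, and two applications of (AC2) in $\Q\Gamma$ yield $\psi(\alpha)\geq\psi(\beta)$. There is no real obstacle here: the only conceptual point is that all four items are really consequences of (AC2) applied in the divisible hull, which is available precisely because \cite{liouville} has already done the work of showing $(\Q\Gamma,\psi)$ is an asymptotic couple.
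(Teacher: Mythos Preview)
Your proposal is correct and matches the paper's approach: the paper presents this lemma as a summary of the paragraph preceding it (citing \cite[Proposition~2.3(2)]{liouville} for the existence of the asymptotic couple structure on $\Q\Gamma$ and noting $\psi((\Q\Gamma)^{\neq})=\Psi$ and $[\Q\Gamma]=[\Gamma]$) and gives no separate proof. Your write-up simply spells out the details the paper leaves implicit, with (AC2) doing the work exactly as intended.
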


\begin{remark}
In terms of the trichotomy of asymptotic couples, (2) from Lemma~\ref{divisibleclosure} says that if $\max\Psi$ exists in $(\Gamma,\psi)$, then this property is preserved when passing to the divisible hull. However, it is entirely possible that $(\Gamma,\psi)$ has asymptotic integration whereas $(\Q\Gamma,\psi)$ has a gap. For an example of this, see the remark after Corollary 2 in~\cite{someremarks}. We avoid this pathology in Section~\ref{TheoryT} by adding the unary function symbols $\delta_1,\delta_2, \delta_3,\ldots$ to our language to ensure divisibility. 
\end{remark}

\begin{example}
\label{example1}
In analogy with $(\Gammalog,\psi)$ defined in Section~\ref{introduction}, we now define $(\Gamma_{\log}^{\Q},\psi)$. Let the underlying abelian group be $\bigoplus_n\Q e_n$, a vector space over $\Q$ with basis $(e_n)$. We make $\Gamma_{\log}^{\Q}$ into an ordered group by requiring for nonzero $\sum_{i}r_ie_i$ that
\[
\sum r_ie_i>0\Longleftrightarrow\text{$r_n>0$ for the least $n$ such that $r_n\neq 0$.}
\]
It is often convenient to think of an element $\sum r_ie_i$ as the vector $(r_0,r_1,r_2,\ldots)$. Define $\psi:\Gamma_{\log}^{\Q,\neq}\to\Gamma_{\log}^{\Q}$ for nonzero $\alpha = (r_0,r_1,r_2,\ldots)$ as follows:
\begin{enumerate}
\item[(Step 1)] Take the unique $n$ such that $r_n\neq 0$ but $r_m = 0$ for $m<n$. Thus
\[
\alpha = (\underbrace{0,\ldots,0}_{n},\underbrace{r_n}_{\neq 0},r_{n+1},\ldots)
\]
\item[(Step 2)] Set $\psi(\alpha):= (\underbrace{1,\ldots,1}_{n+1},0,0,\ldots) = \sum_{k=0}^ne_k.$
\end{enumerate}
The reader should verify the following properties:
\begin{enumerate}
\item $(\Gamma_{\log}^{\Q},\psi)$ is a divisible $H$-asymptotic couple.
\item $(\Gamma_{\log}^{\Q},\psi)$ has asymptotic integration: for any $\alpha = (r_0,r_1,r_2,\ldots)$, take the unique $n$ such that $r_n\neq 1$ and $r_m=1$ for $m<n$. Thus
\[
\alpha = (\underbrace{1,\ldots,1}_n,\underbrace{r_n}_{\neq 1},r_{n+1},\ldots)
\]
and then $\beta:=(\underbrace{0,\ldots,0}_{n},r_n-1,r_{n+1},\ldots)$ is the unique element of $\Gamma_{\log}^{\Q}$ with $\beta' = \alpha$.
\item The set $\Psi = \psi(\Gamma_{\log}^{\Q,\neq})$ is a basis for $\Gamma_{\log}^{\Q}$ as a vector space over $\Q$.
\end{enumerate}
\end{example}

\section{Asymptotic Integration}
\label{asymptoticintegration}

\medskip\noindent
\emph{In this section, $(\Gamma,\psi)$ will be an $H$-asymptotic couple with asymptotic integration and $\alpha,\beta$ will range over $\Gamma$}. By Lemma~\ref{divisibleclosure} we may assume that $(\Gamma,\psi)$ is given as a substructure of some divisible $H$-asymptotic couple. Doing this allows us to multiply by $\frac{1}{n}$ in the proofs, for $n\geq 1$.

\begin{definition}
\label{successordefinition}
Given $\alpha$ we let $\int\alpha$ denote the unique $\beta\neq 0$ such that $\beta' = \alpha$ and we call $\beta=\int\alpha$ the \textbf{integral} of $\alpha$. This gives us a function $\int:\Gamma\to\Gamma^{\neq}$ which is the inverse of $\gamma\mapsto\gamma':\Gamma^{\neq}\to\Gamma$. We sometimes refer to the act of applying the function $\int$ as \textbf{integrating}. Note that $\int\alpha<0$ if $\alpha\in\Psi$.

\medskip\noindent
We define the \textbf{successor function} $s:\Gamma\to\Psi$ by $\alpha\mapsto \psi(\int\alpha)$. The successor function gets its name from the observation that in many cases of interest, such as the asymptotic couple of $\Tlog$, the ordered subset $\Psi$ of $\Gamma$ is a successor set, and for $\alpha\in\Psi$, the immediate successor of $\alpha$ in $\Psi$ is $s(\alpha)$. However in general, $\Psi$ as an ordered subset of $\Gamma$ is not a successor set; for example, if $(\Gamma,\psi)$ is a so-called \emph{closed asymptotic couple} considered in~\cite{closedasymptoticcouples}, then $\Psi$ is a dense ordered set and hence not a successor set.

\medskip\noindent
We also define the \textbf{contraction map} $\chi:\Gamma^{<}\to\Gamma^{<}$ by $\alpha\mapsto \int\psi(\alpha)$. The contraction map gets its name from the connection between asymptotic couples and contraction groups (for instance, see~\cite{kuhlmann1, kuhlmann2, someremarks}). We will only refer to $\chi$ in Section~\ref{embeddinglemmas}. Since $\chi$ can be defined in terms of $\psi$ and $\int$, and $\int$ can be defined in terms of $s$ as we will see in Lemma~\ref{integralidentity}, we choose to focus most of our attention on the function $s$.
 \end{definition}

\begin{lemma}[Integral Identity]
\label{integralidentity}
$\int\alpha = \alpha-s\alpha$.
\end{lemma}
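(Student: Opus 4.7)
The plan is to prove the identity by straightforward unpacking of the definitions; there is essentially no obstacle, and the argument is a one-line computation once the definitions of $\int$ and $s$ are written out side by side.

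First I would recall that by the definition of $\int\alpha$, the element $\beta := \int\alpha$ is characterized as the unique element of $\Gamma^{\neq}$ satisfying $\beta' = \alpha$. Expanding the prime notation introduced at the start of Section~\ref{asymptoticcouples}, this means
\[
\beta + \psi(\beta) = \alpha.
\]
Next I would invoke the definition of the successor function: $s\alpha = \psi(\int\alpha) = \psi(\beta)$. Substituting this into the displayed equation gives $\beta + s\alpha = \alpha$, and rearranging yields $\beta = \alpha - s\alpha$, i.e., $\int\alpha = \alpha - s\alpha$, as claimed.

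The only thing to double-check is that $\int\alpha$ is well-defined for every $\alpha\in\Gamma$, but this is precisely the assumption of asymptotic integration in force throughout Section~\ref{asymptoticintegration}: the map $\gamma\mapsto\gamma':\Gamma^{\neq}\to\Gamma$ is a bijection, and $\int$ is defined as its inverse. So no case distinction or subtle estimate is required. The real content of the lemma is notational: it records that ``integrating'' and ``subtracting the successor'' are literally the same operation, which will be useful later for expressing $\int$ in terms of $s$ as flagged in Definition~\ref{successordefinition}.
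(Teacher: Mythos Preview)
Your proof is correct and follows essentially the same approach as the paper: both arguments expand $(\int\alpha)' = \alpha$ as $\int\alpha + \psi(\int\alpha) = \alpha$, identify $\psi(\int\alpha)$ with $s\alpha$, and rearrange.
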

\begin{proof}
Note that $(\int\alpha)' = \alpha$. Expanding this out gives $\psi(\int\alpha)+\int\alpha = s\alpha+\int\alpha = \alpha$.
\end{proof}

\medskip\noindent
The next lemma tells us, among other things, that for each $\alpha$, we get an increasing sequence:
\[
s\alpha<s^2\alpha<s^3\alpha<s^4\alpha<\cdots
\]
in $\Psi$.

\begin{lemma}
\label{successorincreasing}If $\alpha\in(\Gamma^{<})'$, then $\alpha<s(\alpha)$, and if $\alpha\in(\Gamma^{>})'$, then $\alpha>s(\alpha)$. In particular, if $\alpha\in\Psi$, then $\alpha<s(\alpha)$.
\end{lemma}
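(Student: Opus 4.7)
The plan is to deduce everything directly from the Integral Identity (Lemma~\ref{integralidentity}), which says $\int\alpha = \alpha - s\alpha$. Rearranging, we have
\[
s\alpha - \alpha \ =\ -\int\alpha,
\]
so the sign of $s\alpha - \alpha$ is determined by the sign of $\int\alpha$, and this is exactly what the lemma is asserting.

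In more detail, first I would handle the case $\alpha\in(\Gamma^{<})'$. By definition of $(\Gamma^{<})'$ and uniqueness of the integral, $\int\alpha$ lies in $\Gamma^{<}$, i.e.\ $\int\alpha < 0$. Then the Integral Identity gives $\alpha - s\alpha = \int\alpha < 0$, hence $\alpha < s\alpha$. Symmetrically, if $\alpha\in(\Gamma^{>})'$, then $\int\alpha > 0$ by uniqueness, so $\alpha - s\alpha > 0$ and $\alpha > s\alpha$.

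For the final clause, if $\alpha\in\Psi$, then by the observation recorded in Definition~\ref{successordefinition} (namely, $\int\alpha < 0$ whenever $\alpha\in\Psi$, which itself follows from AC3 since $\Psi<(\Gamma^{>})'$), we have $\alpha\in(\Gamma^{<})'$, and the first part gives $\alpha < s\alpha$.

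There is no real obstacle here: once one has the Integral Identity available, the statement is just a sign check, with the only subtle point being that $\int\alpha<0$ for $\alpha\in\Psi$, which is immediate from AC3.
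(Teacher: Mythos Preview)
Your proof is correct and essentially the same as the paper's: both come down to the identity $\alpha - s\alpha = \int\alpha$ and a sign check on $\int\alpha$. Your version is slightly cleaner in that you invoke the Integral Identity uniformly for both cases, whereas the paper cites AC3 directly for the $(\Gamma^{>})'$ case and unwinds the definitions for the $(\Gamma^{<})'$ case.
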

\begin{proof}
If $\alpha\in(\Gamma^{>})'$, then $\alpha>\psi(\int\alpha)$ by AC3. Thus assume that $\alpha\in(\Gamma^{<})'$ and let $\alpha = \beta'$ with $\beta<0$. Then
\begin{eqnarray*}
\alpha<s(\alpha) &\Leftrightarrow& \alpha<\psi(\textstyle\int\alpha) \\
&\Leftrightarrow& \alpha<\psi(\beta) \\
&\Leftrightarrow& \alpha-\psi(\beta)<0 
\end{eqnarray*}
and the latter is true since $\alpha-\psi(\beta) = \beta'-\psi(\beta) = \beta$.
\end{proof}

\medskip\noindent
By HC, if $[\alpha]>[\beta]$ then $\psi(\beta-\alpha) = \psi(\alpha)$. In the case where $[\alpha] = [\beta]$ and $\alpha$ and $\beta$ are both sufficiently far up the set $(\Gamma^{<})'$, the following lemma can be very useful:

\begin{lemma}[Successor Identity]
\label{succid}
If $s\alpha<s\beta$, then $\psi(\beta-\alpha) = s\alpha$.
\end{lemma}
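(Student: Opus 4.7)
My plan is to expand $\beta-\alpha$ using the integral identity and then locate its archimedean class. By Lemma~\ref{integralidentity}, $\alpha = \int\alpha + s\alpha$ and $\beta = \int\beta + s\beta$, so
\[
\beta - \alpha = u + v \qquad \text{where } u := \textstyle\int\beta - \int\alpha \text{ and } v := s\beta - s\alpha.
\]
The goal is to show $[u] = [\int\alpha]$ and $[v] < [\int\alpha]$; then $[\beta-\alpha] = [u+v] = [\int\alpha]$, and since $\psi$ is constant on archimedean classes in $H$-type, $\psi(\beta-\alpha) = \psi(\int\alpha) = s\alpha$, as required.

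The first estimate is routine. The hypothesis $s\alpha < s\beta$ reads $\psi(\int\alpha) < \psi(\int\beta)$, and HC together with AC2 gives $[\int\beta] < [\int\alpha]$ (otherwise HC would force $\psi(\int\alpha) \geq \psi(\int\beta)$). Thus $\int\beta$ is negligible compared to $\int\alpha$, and $[u] = [\int\alpha]$; equivalently, $\psi(u) = s\alpha$ by Fact~\ref{valuationfact}.

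The core of the argument is showing $[v] < [\int\alpha]$, which I would do by passing to the divisible hull $(\Q\Gamma,\psi)$ from Lemma~\ref{divisibleclosure}, using that its $\Psi$-set still equals $\Psi$. For any $n \geq 1$, $\tfrac{1}{n}|\int\alpha| > 0$ in $\Q\Gamma$, so AC3 gives
\[
\bigl(\tfrac{1}{n}|\textstyle\int\alpha|\bigr)' > s\beta.
\]
A direct computation using AC2 evaluates the left-hand side as $\tfrac{1}{n}|\int\alpha| + \psi(|\int\alpha|) = \tfrac{1}{n}|\int\alpha| + s\alpha$, and rearranging yields $n(s\beta - s\alpha) < |\int\alpha|$. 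This inequality is between elements of $\Gamma$ and so holds in $\Gamma$ as well. Since $n$ was arbitrary, $[v] < [\int\alpha]$.

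The main obstacle is precisely this last step: extracting the \emph{strict} archimedean inequality $[v] < [\int\alpha]$, which is why I invoke the divisible hull (so as to divide $|\int\alpha|$ by an arbitrary positive integer). Everything else is routine bookkeeping with the integral identity, AC2/AC3, HC, and the fact that $\psi$ is constant on archimedean classes.
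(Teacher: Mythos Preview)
Your proof is correct and follows the same overall strategy as the paper's: decompose $\beta-\alpha$ into two pieces, show one has archimedean class exactly $[\int\alpha]$ while the other is strictly smaller, pass to the divisible hull to exploit AC3 with $\tfrac{1}{n}|\int\alpha|$, and conclude via constancy of $\psi$ on archimedean classes.

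The only real difference is the decomposition. The paper writes $\beta-\alpha=(\beta-s\alpha)+(s\alpha-\alpha)$ and proves $[\beta-s\alpha]<[s\alpha-\alpha]=[\int\alpha]$; this forces a case split on the sign of $\int\alpha$ and the position of $\beta$ relative to $s\alpha$ (the paper handles one case in detail and waves at the rest with ``all other cases are similar''). Your decomposition $\beta-\alpha=(\int\beta-\int\alpha)+(s\beta-s\alpha)$ is slightly cleaner: because you work with $|\int\alpha|$ and because $v=s\beta-s\alpha>0$ by hypothesis, the AC3 argument goes through uniformly with no cases. Both routes are short, but yours buys a tidier presentation.
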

\begin{proof}
Assume $s\alpha<s\beta$. We will prove that $[\beta-s\alpha]<[s\alpha-\alpha]$, and so $\psi(\beta-\alpha)=\psi(s\alpha-\alpha) = \psi(-\int \alpha) = s\alpha$. From $s\alpha<s\beta$ we get $\psi(\int \alpha)<\psi(\int \beta)$, which gives $[\int \beta]<[\int \alpha]$. First consider the case where $\alpha\in (\Gamma^{<})'$ and $s\alpha<\beta$. Then $\int \alpha<0$ and $s\alpha-\alpha>0$. Note that
\begin{eqnarray*}
[\beta-s\alpha]<[s\alpha-\alpha] &\Leftrightarrow& \beta-s\alpha < \tfrac{1}{n}(s\alpha-\alpha) \text{\;\;for all $n\geq 1$}\\
&\Leftrightarrow& \beta<s\alpha + \tfrac{1}{n}(s\alpha-\alpha) \text{\;\;for all $n\geq 1$} \\
&\Leftrightarrow& \beta<\psi (\textstyle\int \alpha) + \tfrac{1}{n} (-\textstyle\int \alpha) \text{\;\;for all $n\geq 1$} \\
&\Leftrightarrow& \beta< \psi(-\tfrac{1}{n}\textstyle\int \alpha)+ (-\tfrac{1}{n}\textstyle\int \alpha) \text{\;\;for all $n\geq 1$} \\
&\Leftrightarrow& \beta< (-\tfrac{1}{n}\textstyle\int \alpha)' \text{\;\;for all $n\geq 1$} \\
&\Leftrightarrow& \textstyle\int \beta< \frac{1}{n} (-\textstyle\int \alpha) \text{\;\;for all $n\geq 1$,}
\end{eqnarray*}
and the latter holds because $[\int \beta]<[\int \alpha]$. All other cases are similar.
\end{proof}

\medskip\noindent
It follows that $s$ can be defined in terms of $\psi$ if we allow a suitable ``external parameter'':

\begin{cor}
\label{spsigap}
Let $(\Gamma^*,\psi^*)$ be an $H$-asymptotic couple with asymptotic integration that extends $(\Gamma,\psi)$. Suppose $\gamma^*\in\Psi^*$ is such that $\Psi<\gamma^*$. Then $s(\alpha) = \psi^*(\alpha-\gamma^*)$ for all $\alpha\in\Gamma$.
\end{cor}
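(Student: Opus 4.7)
The plan is to reduce the corollary to the Successor Identity (Lemma~\ref{succid}) applied inside the larger couple $(\Gamma',\psi')$. Writing $s'$ for the successor function of $(\Gamma',\psi')$, the instance of Lemma~\ref{succid} with its ``$\beta$'' taken to be $\gamma$ and its ``$\alpha$'' taken to be our $\alpha$ will produce $\psi'(\gamma - \alpha) = s'(\alpha)$ as soon as we have $s'(\alpha) < s'(\gamma)$, and then (AC2) converts $\psi'(\gamma - \alpha)$ into $\psi'(\alpha - \gamma)$. So the proof reduces to two checks: (i) that $s'(\alpha) = s(\alpha)$ for $\alpha \in \Gamma$, and (ii) that $s'(\alpha) < s'(\gamma)$.

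For (i), I would use the hypothesis that $(\Gamma,\psi)$ itself has asymptotic integration, which is crucial: the element $\int\alpha \in \Gamma$ is the unique solution in $\Gamma$ to $\beta' = \alpha$, and since $(\Gamma',\psi')$ also has asymptotic integration the solution is unique there too, hence the integrals agree. Combined with the fact that $\psi'$ extends $\psi$, this yields $s'(\alpha) = \psi'(\int\alpha) = \psi(\int\alpha) = s(\alpha)$. For (ii), $s(\alpha) \in \Psi$ together with the hypothesis $\Psi < \gamma$ gives $s'(\alpha) = s(\alpha) < \gamma$, while $\gamma \in \Psi'$ together with Lemma~\ref{successorincreasing} (applied in $(\Gamma',\psi')$) gives $\gamma < s'(\gamma)$; chaining yields $s'(\alpha) < s'(\gamma)$.

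I do not anticipate any real obstacle here --- the corollary is essentially Lemma~\ref{succid} made uniform in $\alpha$ by fixing a single parameter $\gamma$ sitting above $\Psi$ in some extension. The only subtle point is that the integration and successor operations of $\Gamma$ and $\Gamma'$ must be compatible on $\Gamma$, which is exactly what the assumption of asymptotic integration in both couples buys us. (As a sanity check, one can also verify that the assumption $\Psi < \gamma \in \Psi'$ forces $\gamma \notin \Gamma$, so $\alpha - \gamma \neq 0$ and the expression $\psi'(\alpha - \gamma)$ makes sense: if $\gamma \in \Gamma$ then by the trichotomy $\gamma \in (\Gamma^{>})'$, which via AC3 makes $s'(\gamma) < \gamma$, contradicting Lemma~\ref{successorincreasing}.)
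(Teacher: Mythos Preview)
Your proposal is correct and follows exactly the approach the paper intends: the corollary is stated immediately after Lemma~\ref{succid} with only the remark that $s$ can be defined from $\psi$ via an external parameter, so the paper's implicit proof is precisely the application of the Successor Identity in $(\Gamma',\psi')$ that you carry out. You have simply made explicit the two verifications (compatibility $s'|_\Gamma = s$ via uniqueness of integrals, and $s'(\alpha) < s'(\gamma)$ via $\Psi < \gamma < s'(\gamma)$) that the paper leaves to the reader.
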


\medskip\noindent
Since $\Psi$ has no largest element, compactness yields an extension $(\Gamma^*,\psi^*)$ of $(\Gamma,\psi)$ with an element $\gamma^*$ as in Corollary~\ref{spsigap}. In Section~\ref{embeddinglemmas} we also give explicit constructions for extensions with this property in Lemma~\ref{Nontop} and Lemma~\ref{Zontop}. 

\medskip\noindent
Since $(\Gamma,\psi)$ has asymptotic integration, Corollary~\ref{trichotomy} tells us that $(\Gamma,\psi)$ most definitely does not have a gap. However, it is fun (also useful) to summarize Corollary~\ref{spsigap} with the following slogan:
\[
``s(x) = \psi(x - \text{gap that does not exist})"
\]
This fact is essential for Corollary~\ref{ssfunctions} and a variant of this device allows the proof of Lemma~\ref{Zinmiddle} to be carried out.
The following is immediate from Corollary~\ref{spsigap} and HC for $\psi$:

\begin{cor} The function $s$ has the following properties:
\begin{enumerate}
\item $s$ is increasing on $(\Gamma^{<})'$ and decreasing on $(\Gamma^{>})'$,
\item if $\alpha\in s(\Gamma)$, then $s^{-1}(\alpha)\cap(\Gamma^{>})'$ and $s^{-1}(\alpha)\cap(\Gamma^{<})'$ are convex in $\Gamma$,
\item if $s$ is injective on $\Psi$, then $s$ is strictly increasing on $\Psi$.
\end{enumerate}
\end{cor}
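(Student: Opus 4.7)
The plan is to invoke Corollary~\ref{spsigap} to embed $(\Gamma,\psi)$ into an $H$-asymptotic couple $(\Gamma',\psi')$ with asymptotic integration containing an element $\gamma\in\Psi'$ with $\Psi<\gamma$, so that $s(\alpha)=\psi'(\alpha-\gamma)$ for every $\alpha\in\Gamma$. The single preliminary I need is the identification $(\Gamma^{<})'=\{\alpha\in\Gamma:\alpha<\gamma\}$ and $(\Gamma^{>})'=\{\alpha\in\Gamma:\alpha>\gamma\}$: any $\alpha=\delta+\psi(\delta)\in(\Gamma^{<})'$ with $\delta<0$ satisfies $\alpha<\psi(\delta)\in\Psi<\gamma$; any $\alpha=\delta'\in(\Gamma^{>})'$ satisfies $\alpha>\gamma$ by AC3 applied in $(\Gamma',\psi')$ to $\delta>0$; and $\Gamma=(\Gamma^{<})'\cup(\Gamma^{>})'$ by asymptotic integration. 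So $(\Gamma^{<})'$ and $(\Gamma^{>})'$ are convex segments of $\Gamma$ separated by $\gamma$.

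For (1), if $\alpha_1<\alpha_2$ in $(\Gamma^{<})'$, then $0<\gamma-\alpha_2<\gamma-\alpha_1$, and HC applied to $\psi'$ yields $\psi'(\gamma-\alpha_2)\geq\psi'(\gamma-\alpha_1)$, i.e., $s(\alpha_2)\geq s(\alpha_1)$. The symmetric argument on $(\Gamma^{>})'$, where the shifted arguments $\alpha_i-\gamma$ are now both positive, delivers the reverse inequality.

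For (2), I combine convexity of $(\Gamma^{<})'$ and $(\Gamma^{>})'$ with part (1): if $\beta_1<\beta_2<\beta_3$ with $\beta_1,\beta_3\in s^{-1}(\alpha)\cap(\Gamma^{<})'$, convexity forces $\beta_2\in(\Gamma^{<})'$ and the monotonicity of (1) squeezes $\alpha=s(\beta_1)\leq s(\beta_2)\leq s(\beta_3)=\alpha$; the analogous argument handles $(\Gamma^{>})'$. For (3), the observation that $\int\phi<0$ for $\phi\in\Psi$, already noted in Definition~\ref{successordefinition}, gives $\Psi\subseteq(\Gamma^{<})'$, so by (1), $s$ is weakly increasing on $\Psi$, and injectivity upgrades this to strict monotonicity.

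I anticipate no real obstacle: the paper itself signals the result is immediate from Corollary~\ref{spsigap} and HC. The piece of bookkeeping worth double-checking is the description of $(\Gamma^{<})'$ and $(\Gamma^{>})'$ as the two $\Gamma$-segments cut out by $\gamma$, since that is what turns monotonicity of $s$ into a direct application of HC to the translated arguments $\alpha-\gamma$.
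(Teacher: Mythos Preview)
Your proof is correct and follows exactly the route the paper indicates: the paper's own proof is simply the sentence ``immediate from Corollary~\ref{spsigap} and HC for $\psi$'', and you have unpacked that sentence faithfully. The only thing you add beyond the paper is the explicit verification that $(\Gamma^{<})'$ and $(\Gamma^{>})'$ are precisely the two convex pieces of $\Gamma$ separated by $\gamma$, which is the honest bookkeeping needed to make the appeal to HC on $\psi'(\cdot-\gamma)$ go through; this is sound (and your remark that $\gamma\notin\Gamma$ follows since $\gamma\in\Psi'$ forces $\gamma\notin((\Gamma')^{>})'\supseteq(\Gamma^{>})'$, while $\Psi<\gamma$ rules out $(\Gamma^{<})'$).
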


\medskip\noindent
The following lemma is also useful in understanding $s$ in terms of $\psi$.

\begin{lemma}[Fixed Point Identity]
\label{fixedpointidentity}
$\beta=\psi(\alpha-\beta)$ iff $\beta=s(\alpha)$.
\end{lemma}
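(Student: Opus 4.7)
The plan is to derive both directions directly from the Integral Identity (Lemma~\ref{integralidentity}) together with the definition $s(\alpha) = \psi(\int \alpha)$; essentially no deeper content about asymptotic couples is needed.

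For the implication ($\Leftarrow$), assume $\beta = s(\alpha)$. By Lemma~\ref{integralidentity}, $\alpha - \beta = \alpha - s(\alpha) = \int \alpha$. Applying $\psi$ to both sides and using the definition of $s$ gives $\psi(\alpha - \beta) = \psi(\int \alpha) = s(\alpha) = \beta$.

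For ($\Rightarrow$), assume $\beta = \psi(\alpha - \beta)$. First I would note that $\alpha - \beta \neq 0$: by our convention $\psi(0) = \infty \notin \Gamma$, but $\beta \in \Gamma$, so the equality forces the argument to be nonzero. Set $\gamma := \alpha - \beta \in \Gamma^{\neq}$. Then the hypothesis reads $\beta = \psi(\gamma)$, so
\[
\alpha \;=\; \gamma + \beta \;=\; \gamma + \psi(\gamma) \;=\; \gamma'.
\]
Hence $\gamma$ is the (unique) integral of $\alpha$, i.e., $\gamma = \int \alpha$, and so $\beta = \psi(\gamma) = \psi(\int \alpha) = s(\alpha)$.

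There is no real obstacle here: the argument is a two-line unwinding of definitions. The only subtle point worth flagging explicitly is the nonvanishing of $\alpha - \beta$ in the ($\Rightarrow$) direction, which is what licenses treating $\psi(\alpha-\beta)$ as an ordinary value in $\Gamma$ and applying the inverse relationship between $\int$ and the map $\gamma \mapsto \gamma'$ provided by asymptotic integration.
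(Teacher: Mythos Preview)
Your proof is correct and follows essentially the same approach as the paper: both directions are obtained by applying $\psi$ to the Integral Identity $\int\alpha = \alpha - s\alpha$ and, for the converse, rewriting $\alpha = (\alpha-\beta) + \psi(\alpha-\beta) = (\alpha-\beta)'$ to recognize $\alpha-\beta$ as $\int\alpha$. Your explicit remark that $\alpha-\beta\neq 0$ (since $\psi(0)=\infty\notin\Gamma$) is a nice clarification that the paper leaves implicit.
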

\begin{proof}
Applying $\psi$ to $\int\alpha = \alpha-s\alpha$ gives $s\alpha = \psi(\alpha-s\alpha)$. Next, suppose that $\beta = \psi(\alpha-\beta)$. Then $\alpha = (\alpha-\beta)+\beta = (\alpha-\beta)+\psi(\alpha-\beta)$ and so $\int\alpha = \alpha-\beta$. Applying $\psi$ yields $s\alpha = \psi(\alpha-\beta) = \beta$.\end{proof}

\medskip\noindent
The following lemma is a more constructive version of~\cite[Lemma 4.6]{closedasymptoticcouples}, but will not be used in the rest of this paper.

\begin{lemma}[Limit Lemma]
\label{limitlemma}
Let $\alpha\in\Gamma$. Then $\gamma_0:=s^2\alpha\in\Psi$ and $\delta_0:=s^2\alpha-\int s\alpha\in(\Gamma^{>})'$ and the map
\[
\gamma\mapsto \psi(\gamma-\alpha):\Gamma\to\Gamma_{\infty}
\]
takes the constant value $s\alpha$ on the set $[\gamma_0,\delta_0]:=\{\gamma:\gamma_0\leq\gamma\leq\delta_0\}$.
\end{lemma}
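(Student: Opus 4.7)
The plan is to verify the three claims in order, pivoting on the identification $\delta_0 = (-\int s\alpha)'$ and closing with the Successor Identity (Lemma~\ref{succid}).

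The first claim is free: $\gamma_0 = s(s\alpha) = \psi(\int s\alpha) \in \Psi$ by the definitions of $s$ and $\Psi$. For the second, I would first note that $\int s\alpha < 0$: since $s\alpha \in \Psi$, Lemma~\ref{successorincreasing} gives $s\alpha < s^2\alpha$, and the Integral Identity yields $\int s\alpha = s\alpha - s^2\alpha < 0$, so $-\int s\alpha > 0$. Using AC2, one computes
\[
(-\textstyle\int s\alpha)' \ =\ -\textstyle\int s\alpha + \psi(-\textstyle\int s\alpha) \ =\ -\textstyle\int s\alpha + \psi(\textstyle\int s\alpha) \ =\ -\textstyle\int s\alpha + s^2\alpha \ =\ \delta_0,
\]
which exhibits $\delta_0 \in (\Gamma^{>})'$.

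For the third claim, I plan to show $s\gamma > s\alpha$ for every $\gamma \in [\gamma_0,\delta_0]$; the Successor Identity then gives $\psi(\gamma-\alpha) = s\alpha$ (and, incidentally, forces $\gamma \neq \alpha$, so the value is finite and $\alpha \notin [\gamma_0,\delta_0]$ is not a worry). Asymptotic integration yields the disjoint partition $\Gamma = (\Gamma^{<})' \sqcup (\Gamma^{>})'$, and I split into two cases using the monotonicity of $s$ from the unlabeled corollary immediately following Corollary~\ref{spsigap}. If $\gamma \in (\Gamma^{<})' \cap [\gamma_0,\delta_0]$, then $s$ is increasing on $(\Gamma^{<})'$, so $s\gamma \geq s\gamma_0 = s^3\alpha$, and two applications of Lemma~\ref{successorincreasing} to the $\Psi$-elements $s\alpha$ and $s^2\alpha$ give $s\alpha < s^2\alpha < s^3\alpha$. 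If instead $\gamma \in (\Gamma^{>})' \cap [\gamma_0,\delta_0]$, then $s$ is decreasing on $(\Gamma^{>})'$, so $s\gamma \geq s\delta_0$; the step-two identification yields $\int \delta_0 = -\int s\alpha$, whence $s\delta_0 = \psi(\int s\alpha) = s^2\alpha > s\alpha$. Either way, $s\gamma > s\alpha$.

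The only genuine obstacle is spotting the formula $\delta_0 = (-\int s\alpha)'$; once this is in hand, $s\delta_0 = s^2\alpha$ drops out and the monotonicity sandwich closes the argument. Everything else is a routine assembly of the Integral Identity, the Successor Identity, and the basic facts about $s$ and $\Psi$ collected earlier in the section.
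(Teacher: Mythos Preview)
Your proof is correct and proceeds along a genuinely different route from the paper's. Both arguments begin identically, recognizing that $\delta_0=(-\int s\alpha)'$ with $-\int s\alpha>0$. For the constancy claim, however, the paper first evaluates $\psi(\gamma_0-\alpha)$ and $\psi(\delta_0-\alpha)$ separately (both equal $s\alpha$), and then interpolates via HC after a case split on whether $\alpha\in(\Gamma^{<})'$ or $\alpha\in(\Gamma^{>})'$, in each case verifying that $\alpha$ lies outside $[\gamma_0,\delta_0]$ so that $\gamma-\alpha$ does not change sign on the interval. You instead prove $s\gamma>s\alpha$ for every $\gamma\in[\gamma_0,\delta_0]$ by splitting on which half of $\Gamma=(\Gamma^{<})'\sqcup(\Gamma^{>})'$ contains $\gamma$ and invoking the monotonicity of $s$ on each half, then apply the Successor Identity once. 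Your route is a little cleaner: it sidesteps the paper's somewhat terse verification that $0<-\int s\alpha<\alpha-s^2\alpha$ when $\alpha\in(\Gamma^{>})'$, and it makes the exclusion $\alpha\notin[\gamma_0,\delta_0]$ fall out automatically. The trade-off is that you rely on the monotonicity corollary (hence implicitly on Corollary~\ref{spsigap}), whereas the paper's interpolation uses only HC directly; but since that corollary is already available, this costs nothing.
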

\begin{proof}
Define $\beta_0:=-\int\psi\int\alpha = -\int s(\alpha)>0$. Then $\gamma_0 = \psi(\beta_0)=s^2(\alpha)\in\Psi$ and $\delta_0 = s^2\alpha-\int s\alpha = s^2\alpha+\beta_0=\psi(\beta_0)+\beta_0 = \beta_0'\in(\Gamma^{>})'$. First we calculate the values of $\psi(\gamma_0-\alpha)$ and $\psi(\delta_0-\alpha)$:
\begin{eqnarray*}
\psi(\gamma_0-\alpha) &=& \psi(s^2\alpha-\alpha) \\
&=& s\alpha \quad\text{(by Lemma~\ref{succid})} \\
\psi(\delta_0 - \alpha) &=& \psi(s^2\alpha-\textstyle\int s\alpha-\alpha) \\
&=& \psi((s^2\alpha - \alpha) - \textstyle\int s\alpha)\\
&=& s\alpha \quad\text{(because $\psi(s^2\alpha-\alpha) = s\alpha$)}
\end{eqnarray*}
Finally, we must show that $\psi(\gamma-\alpha)$ is constant as a function of $\gamma\in [\gamma_0,\delta_0]$. By HC, it is sufficient to show that either $\alpha<\gamma_0<\delta_0$ or $\gamma_0<\delta_0<\alpha$. First suppose $\alpha\in(\Gamma^{<})'$. By Lemma~\ref{successorincreasing} it follows that $\alpha<s\alpha<s^2\alpha = \gamma_0<\delta_0$. Otherwise suppose $\alpha\in(\Gamma^{>})'$. Then $\gamma_0<\delta_0$ and
\begin{eqnarray*}
\delta_0<\alpha &\Leftrightarrow& s^2\alpha-\textstyle\int s\alpha < \alpha \\
&\Leftrightarrow& -\textstyle\int s\alpha<\alpha-s^2\alpha.
\end{eqnarray*}
The inequality on the last line holds by HC and the observation that
\[
0<-\textstyle\int s\alpha<\alpha-s^2\alpha. \qedhere
\]
\end{proof}

\begin{lemma}
\label{s0lemma}
$s0\neq 0$ and $s0$ is the unique element $x\in\Gamma^{\neq}$ for which $\psi(x) = x$. 
\end{lemma}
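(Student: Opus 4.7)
The plan is to derive the lemma as a direct consequence of the Fixed Point Identity (Lemma~\ref{fixedpointidentity}) applied with $\alpha = 0$. That identity says $\beta = \psi(\alpha - \beta)$ iff $\beta = s(\alpha)$, so setting $\alpha = 0$ and using AC2 (which gives $\psi(-\beta) = \psi(\beta)$ for $\beta \in \Gamma^{\neq}$) produces the following biconditional: for every $\beta \in \Gamma^{\neq}$, $\beta = \psi(\beta)$ iff $\beta = s(0)$. Once this is in hand, both existence and uniqueness of the fixed point drop out; the only remaining task is to verify separately that $s0 \neq 0$.

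For the nonvanishing, the cleanest route is the Integral Identity (Lemma~\ref{integralidentity}), which gives $s0 = -\int 0$. Since Definition~\ref{successordefinition} declares $\int\colon \Gamma \to \Gamma^{\neq}$, we have $\int 0 \neq 0$, hence $s0 \neq 0$. Alternatively, if one had $s0 = 0$, then the Fixed Point Identity at $\alpha = \beta = 0$ would force $0 = \psi(0) = \infty$, contradicting our extension convention.

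Finally, for the existence half of the fixed-point claim, I apply the Fixed Point Identity with $\alpha = 0$ and $\beta = s0$ to obtain $s0 = \psi(-s0) = \psi(s0)$, where the second equality uses AC2 (applicable since $s0 \neq 0$). For uniqueness, any $x \in \Gamma^{\neq}$ with $\psi(x) = x$ satisfies $x = \psi(x) = \psi(-x) = \psi(0 - x)$, so by the Fixed Point Identity $x = s(0)$. I do not anticipate any real obstacle: once one thinks to specialize the Fixed Point Identity to $\alpha = 0$, the whole lemma is essentially a one-line corollary of that identity together with AC2, plus the basic fact that $\int 0$ is nonzero by construction.
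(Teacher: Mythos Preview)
Your proposal is correct and follows essentially the same approach as the paper: both arguments specialize the Fixed Point Identity to $\alpha=0$ (using AC2 to turn $\psi(-x)$ into $\psi(x)$) to characterize $s0$ as the unique fixed point of $\psi$. The only cosmetic difference is that your primary argument for $s0\neq 0$ goes via the Integral Identity $s0=-\int 0$, whereas the paper argues directly that $s0=0$ would force $0=\psi(0)=\infty$; you mention this alternative as well, so there is no substantive divergence.
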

\begin{proof}
By Lemma~\ref{successorincreasing} we have $s0\neq 0$, and by the Integral Identity $\int 0 = -s0$ and so $s0 = \psi(\int 0) = \psi(-s0) = \psi(s0)$. Uniqueness follows from the Fixed Point Identity: if $\psi(x) = x$, then $x = \psi(0-x)$ and so $x = s0$.
\end{proof}

\medskip\noindent
Lemma~\ref{s0lemma} tells us that $H$-asymptotic couples with asymptotic integration come in two flavors: those with $s0>0$ and those with $s0<0$. The asymptotic couples $(\Gamma_{\log},\psi)$ and $(\Gamma_{\log}^{\Q},\psi)$ are both of type ``$s0>0$''. In the literature, when $s0>0$, then the element $s0$ is often denoted by ``1'' and then $(\Gamma,\psi)$ is said to \textbf{have a $1$}. We will not use this notation since we have the function $s$ at our disposal and we already will be making use of the rational number $1\in\Q$.

\begin{example}
\label{example2}
We return once again to the asymptotic couple $(\Gamma_{\log}^{\Q},\psi)$ defined in Example~\ref{example1}. Property (2) in Example~\ref{example1} already gives us the definition for the function $\int:\Gamma^{\Q}_{\log}\to\Gamma_{\log}^{\Q,\neq}$. Using $s = \psi\circ\int$, we can compute $s\alpha$ for $\alpha = (r_0,r_1,r_2,\ldots)\in\Gamma^{\Q}_{\log}$. Take the unique $n$ such that $r_n\neq 1$ and $r_m = 1$ for $m<n$. Thus
\[
\alpha = (\underbrace{1,\ldots,1}_n,\underbrace{r_n}_{\neq 1},r_{n+1},\ldots)
\]
and then
\[
s\alpha = (\underbrace{1,\ldots,1}_{n+1},0,0,\ldots).
\]
In particular, note that for elements in $\Psi$, $s$ acts as follows:
\begin{eqnarray*}
s(1,0,0,0,0,\ldots) &=& (1,1,0,0,0,\ldots) \\
s(1,1,0,0,0,\ldots) &=& (1,1,1,0,0,\ldots) \\
s(1,1,1,0,0,\ldots) &=& (1,1,1,1,0,\ldots) \\
&\vdots& \\
s(\underbrace{1,\ldots,1}_n,0,0,\ldots) &=& (\underbrace{1,\ldots,1}_{n+1},0,0,\ldots) \\
\end{eqnarray*}
Note that $s0 = (1,0,0,\ldots) = \min\Psi$ and $s0>0$. It is clear that the function $\gamma\mapsto s\gamma:\Psi\to\Psi^{>s0}$ is a bijection and $(\Psi;<)$ is a successor set such that for $\alpha<\beta\in\Psi$ we have $s\alpha\leq\beta$.
\end{example}

\section{The Embedding Lemma Zoo}
\label{embeddinglemmas}

\noindent
\emph{In this section, $(\Gamma,\psi)$ and $(\Gamma_1,\psi_1)$ are divisible $H$-asymptotic couples}. 
We include here many embedding results of the following form:

\begin{embeddinglemmatemplate}
Suppose $(\Gamma,\psi)$ has property $P$. Then there is a divisible $H$-asymptotic couple $(\Gamma',\psi')$ extending $(\Gamma,\psi)$ such that:
\begin{enumerate}
\item $(\Gamma',\psi')$ has property $Q$;
\item if $i:(\Gamma,\psi)\to(\Gamma_1,\psi_1)$ is an embedding such that $(\Gamma_1,\psi_1)$ has property $Q$, then $i$ extends uniquely to an embedding $j:(\Gamma',\psi')\to(\Gamma_1,\psi_1)$.
\end{enumerate}
\end{embeddinglemmatemplate}

\noindent
More often than not, properties $P$ and $Q$ involve the trichotomy presented in Lemma~\ref{trichotomy}. Recall that Lemma~\ref{trichotomy} states that $(\Gamma,\psi)$ has exactly one of the following properties:
\begin{itemize}
\item $(\Gamma,\psi)$ has a gap (``$\exists$ gap'');
\item $\Psi$ has a largest element (``$\exists\max\Psi$'');
\item $\Gamma = (\Gamma^{\neq})'$, that is, $(\Gamma,\psi)$ has asymptotic integration (``Asymptotic Integration'').
\end{itemize}

\noindent
In light of this, the author thought it would be helpful to the reader to include Figure~\ref{embeddinglemmasfigure} 
 as a roadmap for navigating the various embedding results in terms of the trichotomy of Lemma~\ref{trichotomy}.
\begin{figure}[!htbp]
\centering
\caption{Embedding Lemmas for divisible $H$-asymptotic couples}
\label{embeddinglemmasfigure}
\resizebox{13cm}{!}{
\includegraphics{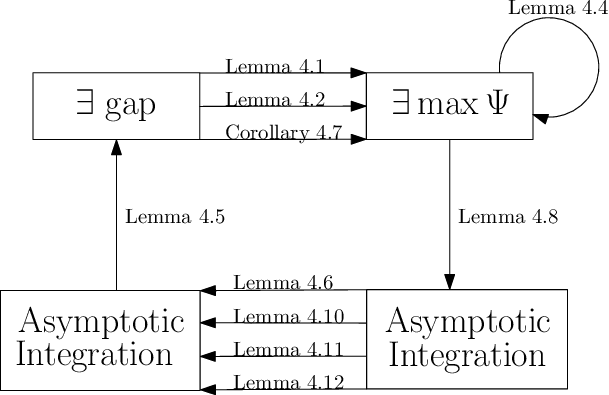}
}
\end{figure}

\medskip\noindent
The first two lemmas allow us to remove a gap by ``adjoining an integral'' for the gap. The first lemma shows that we can make the gap the derivative of a positive element; the lemma after that shows how to make the gap the derivative of a negative element.

\begin{lemma}[Removing a gap, positive version]
\label{gapupwards}
Let $\beta$ be a gap in $(\Gamma,\psi)$. Then there is a divisible $H$-asymptotic couple $(\Gamma+\Q\alpha,\psi^{\alpha})$ extending $(\Gamma,\psi)$ such that:
\begin{enumerate}
\item $\alpha>0$ and $\alpha' = \beta$;
\item if $i:(\Gamma,\psi)\to(\Gamma_1,\psi_1)$ is an embedding and $\alpha_1\in\Gamma_1$, $\alpha_1>0$, $\alpha_1'=i(\beta)$, then $i$ extends uniquely to an embedding $j:(\Gamma+\Q\alpha,\psi^{\alpha})\to(\Gamma_1,\psi_1)$ with $j(\alpha) = \alpha_1$.
\end{enumerate}
Furthermore, $\psi^{\alpha}((\Gamma+\Q\alpha)^{\neq}) = \Psi\cup\{\beta-\alpha\}$ with $\Psi<\beta-\alpha$.
\end{lemma}
\begin{proof}
This is similar to~\cite[Lemma 2.10]{liouville}. For the reader's convenience we mention that the ordering on $(\Gamma+\Q\alpha)$ is given by setting $0<q\alpha<\Gamma^{>}$ for all $q>0$ and $\psi^{\alpha}:(\Gamma+\Q\alpha)^{\neq}\to\Gamma+\Q\alpha$ is defined by
\[
\psi^{\alpha}(\gamma+r\alpha) :=
\begin{cases}
\psi(\gamma), &\text{if $\gamma\neq0$,} \\
\beta-\alpha, &\text{otherwise,}
\end{cases}
\]
for $\gamma\in\Gamma$ and $r\in\Q$, with $\gamma+r\alpha\neq 0$. See also~\cite[Lemma 9.8.2]{adamtt}.
\end{proof}

\begin{lemma}[Removing a gap, negative version]
\label{gapdownwards}
Let $\beta$ be a gap in $(\Gamma,\psi)$. Then there is a divisible $H$-asymptotic couple $(\Gamma+\Q\alpha,\psi^{\alpha})$ extending $(\Gamma,\psi)$ such that:
\begin{enumerate}
\item $\alpha<0$ and $\alpha' = \beta$;
\item if $i:(\Gamma,\psi)\to(\Gamma_1,\psi_1)$ is an embedding and $\alpha_1\in\Gamma_1$, $\alpha_1<0$, $\alpha_1'=i(\beta)$, then $i$ extends uniquely to an embedding $j:(\Gamma+\Q\alpha,\psi^{\alpha})\to(\Gamma_1,\psi_1)$ with $j(\alpha) = \alpha_1$.
\end{enumerate}
Furthermore, $\psi^{\alpha}((\Gamma+\Q\alpha)^{\neq}) = \Psi\cup\{\beta-\alpha\}$ with $\Psi<\beta-\alpha$.
\end{lemma}
\begin{proof}
This is similar to~\cite[Lemma 2.11]{liouville} and the construction of $(\Gamma+\Q\alpha,\psi^{\alpha})$ is similar to Lemma~\ref{gapupwards} except we set $\Gamma^{<}<q\alpha<0$ for all $q>0$.
\end{proof}

\begin{remark}
Lemmas~\ref{gapupwards} and~\ref{gapdownwards} show us that there are essentially two ways to remove a gap. These two ways are incompatible in the sense that given $(\Gamma,\psi)$ with gap $\beta$, we can obtain $(\Gamma^+,\psi^+)$ from Lemma~\ref{gapupwards} and $(\Gamma^-,\psi^-)$ from Lemma~\ref{gapdownwards} and there is no common extension $(\Gamma',\psi')$ in which these two can be amalgamated, i.e., the following configuration of embeddings is impossible:
\[
\xymatrix{
&(\Gamma',\psi')&\\
(\Gamma^+,\psi^+)\ar[ur]&&(\Gamma^-,\psi^-)\ar[ul]\\
&(\Gamma,\psi)\ar[ul] \ar[ur]&\\
}
\]
This issue is referred to as the ``fork in the road'' and is an obstruction to quantifier elimination. In~\cite{closedasymptoticcouples} this issue is resolved by adding an additional predicate to the language that ``decides'' for a gap whether it is supposed to be the derivative of a positive or of a negative element. We avoid this obstacle in Section~\ref{TheoryT} by adding the function $s$ to our language which ensures that all asymptotic couples considered already have asymptotic integration. The tradeoff in doing so is that we can only use embedding lemmas of the form
\[
\text{(Asymptotic Integration)} \to \text{(Asymptotic Integration)}
\]
(in the sense of Figure~1) in our proof of quantifier elimination.
\end{remark}

\medskip\noindent
If $(\Gamma,\psi)$ has a largest element $\beta$ in its $\Psi$-set, then Theorem~\ref{trichotomy} tells us that there is no $\alpha\in\Gamma$ such that $\alpha' = \beta$. Lemma~\ref{addnewlargestelement} tells us how to ``adjoin an integral'' for such an element $\beta$. It is important to note that the extension of $(\Gamma,\psi)$ constructed in Lemma~\ref{addnewlargestelement} also has a $\Psi$-set with a largest element.

\begin{lemma}[Adjoining an integral for $\max\Psi$]
\label{addnewlargestelement}
Assume $\Psi$ has a largest element $\beta$. Then there is a divisible $H$-asymptotic couple $(\Gamma+\Q\alpha,\psi^{\alpha})$ extending $(\Gamma,\psi)$ with $\alpha\neq0$, $\alpha'=\beta$, such that for any embedding $i:(\Gamma,\psi)\to(\Gamma_1,\psi_1)$ and any $\alpha_1\in\Gamma_1^{\neq}$ with $\alpha_1'=i(\beta)$ there is a unique extension of $i$ to an embedding $j:(\Gamma+\Q\alpha,\psi^{\alpha})\to(\Gamma_1,\psi_1)$ with $j(\alpha) = \alpha_1$. Furthermore, $\psi^{\alpha}((\Gamma+\Q\alpha)^{\neq}) = \Psi\cup\{\beta-\alpha\}$ with $\Psi<\beta-\alpha$.
\end{lemma}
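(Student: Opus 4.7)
The plan is to follow the pattern of Lemma~\ref{gapupwards} and Lemma~\ref{gapdownwards}, with $\beta = \max\Psi$ now playing the role previously played by the gap. A quick preliminary analysis of the universal property pins down the required ``shape'' of $\alpha$. Indeed, $\alpha_1 > 0$ together with $\alpha_1' = i(\beta)$ would contradict AC3 in $\Gamma_1$ applied to $i(\gamma_0)$ for any $\gamma_0 \in \Gamma^{\neq}$ with $\psi(\gamma_0) = \beta$ (since then $\psi_1(i(\gamma_0)) = i(\beta) = \alpha_1'$, yet AC3 forces $\alpha_1' > \psi_1(i(\gamma_0))$); hence $\alpha_1 < 0$, and then $\psi_1(\alpha_1) = i(\beta) - \alpha_1 > i(\beta) \geq \psi_1(i(\gamma))$ for every $\gamma \in \Gamma^{\neq}$. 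Combined with HC and the fact that $\psi_1$ is constant on archimedean classes (by $H$-type), this forces $[\alpha_1] < [i(\gamma)]$ for every $\gamma \in \Gamma^{\neq}$.

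Guided by this, I would construct the extension concretely: take $\Gamma + \Q\alpha$ as an internal direct sum with a fresh symbol $\alpha$, order it by declaring $\Gamma^{<} < q\alpha < 0$ for all $q \in \Q^{>}$ (so $|q\alpha|$ sits in a new archimedean class strictly below every nonzero class of $\Gamma$), and put
\[
\psi^{\alpha}(\gamma + q\alpha) := \begin{cases} \psi(\gamma), & \gamma \neq 0, \\ \beta - \alpha, & \gamma = 0 \text{ and } q \neq 0. \end{cases}
\]
Then $\alpha' = \alpha + (\beta - \alpha) = \beta$ by direct calculation, and $\psi^{\alpha}((\Gamma+\Q\alpha)^{\neq}) = \Psi \cup \{\beta - \alpha\}$ with $\beta - \alpha > \beta$, since $-\alpha > 0$. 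The verification of AC1, AC2, AC3, and HC for $(\Gamma+\Q\alpha, \psi^{\alpha})$ then reduces to a routine case split on whether the $\Gamma$-components vanish; Fact~\ref{valuationfact} and the inequalities $\beta - \alpha > \beta \geq \psi(\gamma)$ do most of the work, and for AC3 applied to a positive element $\gamma + q\alpha$ with $\gamma > 0$ one uses in addition that $\Psi < (\Gamma^{>})'$ to handle the interaction of $\gamma'$ with the new value $\beta - \alpha$.

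For the universal property, the two observations derived at the outset imply in particular that $\alpha_1 \notin i(\Gamma)$, so $j(\gamma + q\alpha) := i(\gamma) + q\alpha_1$ is a well-defined $\Q$-linear extension of $i$ on the direct sum. Order preservation follows from $\alpha_1 < 0$ and $[\alpha_1] < [i(\gamma)]$ for $\gamma \in \Gamma^{\neq}$, matched with the definition of the order on $\Gamma + \Q\alpha$; the identity $\psi_1 \circ j = j \circ \psi^{\alpha}$ splits into two cases, handled by $\psi_1(i(\gamma) + q\alpha_1) = \psi_1(i(\gamma))$ (again using $[\alpha_1] < [i(\gamma)]$ together with Fact~\ref{valuationfact}) and $\psi_1(q\alpha_1) = \psi_1(\alpha_1) = i(\beta) - \alpha_1$ (using AC2). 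Uniqueness of $j$ is forced by $j(\alpha) = \alpha_1$ and $\Q$-linearity. The main obstacle, to the extent there is one, is the a priori identification of the sign and archimedean placement of $\alpha_1$; once these are pinned down the axioms are essentially bookkeeping parallel to Lemma~\ref{gapupwards}.
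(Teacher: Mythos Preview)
Your proposal is correct and carries out in full the construction that the paper merely cites (``a variant of~\cite[Lemma 2.12]{liouville}''); your preliminary step forcing $\alpha_1<0$ with $[\alpha_1]<[i(\Gamma^{\neq})]$ is exactly the observation that makes the ordering and $\psi$-compatibility of $j$ go through, and your axiom checks are accurate (in particular your remark that AC3 against the new value $\beta-\alpha$ needs $\Psi<(\Gamma^{>})'$ is the one nonobvious point).
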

\begin{proof}
This is a variant of~\cite[Lemma 2.12]{liouville}.
\end{proof}

\medskip\noindent
The next lemma allows us to add a gap to an asymptotic couple with asymptotic integration.

\begin{lemma}[Adding a gap]
\label{addgap}
Suppose $(\Gamma,\psi)$ has asymptotic integration. Then there is a divisible $H$-asymptotic couple $(\Gamma+\Q\beta,\psi_{\beta})$ extending $(\Gamma,\psi)$ such that:
\begin{enumerate}
\item $\Psi<\beta<(\Gamma^{>})'$;
\item for any $(\Gamma_1,\psi_1)$ extending $(\Gamma,\psi)$ and $\beta_1\in\Gamma_1$ with $\Psi<\beta_1<(\Gamma^{>})'$ there is a unique embedding $(\Gamma+\Q\beta,\psi_{\beta})\to(\Gamma_1,\psi_1)$ of asymptotic couples that is the identity on $\Gamma$ and sends $\beta$ to $\beta_1$;
\item the set $\Gamma$ is dense in the ordered abelian group $\Gamma+\Q\beta$, so $[\Gamma] = [\Gamma+\Q\beta]$, $\Psi = \psi_{\beta}((\Gamma+\Q\beta)^{\neq})$ and $\beta$ is a gap in $(\Gamma+\Q\beta,\psi_{\beta})$.
\end{enumerate}
\end{lemma}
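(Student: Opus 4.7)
The plan is to formally adjoin a single new element $\beta$ realizing the cut $(D,U)$ of $\Gamma$, where $D$ is the downward closure of $\Psi$ in $\Gamma$ and $U$ is the upward closure of $(\Gamma^{>})'$. Since $(\Gamma,\psi)$ has asymptotic integration, Lemma~\ref{trichotomy} rules out any element of $\Gamma$ strictly between $\Psi$ and $(\Gamma^{>})'$, so $D \cup U = \Gamma$ and the cut is consistent. I set $\Gamma + \Q\beta := \Gamma \oplus \Q\beta$ as an abelian group and equip it with the unique ordered abelian group structure extending that of $\Gamma$ in which $\Psi < \beta < (\Gamma^{>})'$.

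Guided by Corollary~\ref{spsigap} and its slogan ``$s(x) = \psi(x - \text{gap})$'', I define $\psi_{\beta}\colon (\Gamma+\Q\beta)^{\neq} \to \Gamma+\Q\beta$ by $\psi_{\beta}(\gamma) := \psi(\gamma)$ for $\gamma \in \Gamma^{\neq}$ and $\psi_{\beta}(\gamma + r\beta) := s(-\gamma/r)$ for $r \in \Q^{\neq}$ and $\gamma \in \Gamma$. By construction $\psi_{\beta}((\Gamma+\Q\beta)^{\neq}) = \Psi$, so $\beta$ is a gap in the new structure and the clauses of (1) and (3) concerning the $\Psi$-set hold immediately. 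Then I verify AC1, AC2, AC3, HC, and the $H$-type property: AC2 is immediate from the formula, while AC3 and HC follow from the monotonicity of $s$ (the corollary after Lemma~\ref{successorincreasing}) together with the cut condition defining $\beta$. AC1 is checked by splitting on the two $\beta$-coefficients: the case where the coefficients do not cancel ``pulls $\beta$ out'' and reduces to AC1 in $(\Gamma,\psi)$, while the case where the coefficients cancel (so the sum sits back in $\Gamma$) is handled by the Successor Identity (Lemma~\ref{succid}) when the two successor values differ, and by rewriting the sum as a difference of two integrals with the same $\psi$-value when they coincide.

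For the universal property (2), I extend $i$ by $\beta \mapsto \beta_1$ and must show that $\psi_1(\beta_1 - i(\gamma)) = s(\gamma)$ for every $\gamma \in \Gamma$; this is exactly what is needed to make the defining formula for $\psi_{\beta}$ compatible with $\psi_1$, and it also gives uniqueness of the extension. Fix $\gamma$, and let $\gamma_0 := s^2(\gamma) \in \Psi$ and $\delta_0 := s^2(\gamma) - \int s(\gamma) \in (\Gamma^{>})'$ as in the Limit Lemma~\ref{limitlemma}. From $\Psi < \beta_1 < (\Gamma^{>})'$ one gets $\gamma_0 \leq \beta_1 \leq \delta_0$ in $\Gamma_1$, and the archimedean-class/HC argument used in the proof of the Limit Lemma carries over verbatim inside $(\Gamma_1,\psi_1)$ to yield $[\beta_1 - \gamma] = [\gamma_0 - \gamma]$ and hence $\psi_1(\beta_1 - i(\gamma)) = \psi(\gamma_0 - \gamma) = s(\gamma)$.

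The density clause and the equality $[\Gamma + \Q\beta] = [\Gamma]$ of (3) follow by a squeezing argument: $\Psi$ has no maximum (Lemma~\ref{trichotomy}) and $(\Gamma^{>})'$ has no minimum in the divisible setting (since $(\gamma/2)' < \gamma'$ for any $\gamma > 0$), so both sides of the cut defining $\beta$ are cofinal to it, and after scaling by rationals any $\gamma + r\beta$ can be $\Gamma$-approximated arbitrarily closely from both sides. The main obstacle I anticipate is the bookkeeping in AC1 when the $\beta$-coefficients of the two summands cancel: one must compare $\psi(\gamma_1 + \gamma_2)$ against $\min\bigl(s(-\gamma_1/r),\,s(\gamma_2/r)\bigr)$ and handle separately the subcases of equal versus distinct successor values. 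In both subcases, however, only the Successor Identity and AC1 for the original $(\Gamma,\psi)$ are needed, so no genuinely new tools beyond those developed in Section~\ref{asymptoticintegration} enter the proof.
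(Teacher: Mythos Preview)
Your approach is correct and genuinely different from the paper's. The paper does not actually prove Lemma~\ref{addgap}: it just says ``the proof uses a compactness argument and can be found in~[mt]''. In that argument one takes an elementary extension containing an element realizing the cut $\Psi < \beta_1 < (\Gamma^{>})'$ and then analyzes the substructure it generates over $\Gamma$. You instead build $(\Gamma+\Q\beta,\psi_\beta)$ explicitly from the successor function, and---very nicely---use the Limit Lemma~\ref{limitlemma} for the universal property. The paper itself remarks that Lemma~\ref{limitlemma} ``will not be used in the rest of this paper'', so your use of it here is a genuine alternative application. Your sandwich argument for (2) is clean: with $\gamma_0 = s^2\gamma\in\Psi$ and $\delta_0 = s^2\gamma-\int s\gamma\in(\Gamma^{>})'$ one has $\gamma_0<\beta_1<\delta_0$, the proof of Lemma~\ref{limitlemma} gives that $\gamma_0-\gamma$ and $\delta_0-\gamma$ have the same sign, and then HC in $(\Gamma_1,\psi_1)$ squeezes $\psi_1(\beta_1-\gamma)$ between $\psi(\gamma_0-\gamma)=s\gamma$ and $\psi(\delta_0-\gamma)=s\gamma$.

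One small correction to your sketch: in the AC1 verification, the non-cancelling case does \emph{not} literally ``reduce to AC1 in $(\Gamma,\psi)$''. What you actually need there is $s(\lambda a+(1-\lambda)b)\ge\min(s(a),s(b))$ for $\lambda\in\Q$, which follows from the Successor Identity (Lemma~\ref{succid}) by the same contrapositive trick you use in the cancelling case: if $s(\lambda a+(1-\lambda)b)<s(a)$ then $\psi\bigl((1-\lambda)(a-b)\bigr)=s(\lambda a+(1-\lambda)b)$, contradicting $\psi(a-b)\ge\min(s(a),s(b))$. So the Successor Identity is the workhorse in \emph{all} subcases of AC1, not just the cancelling one. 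Alternatively, once you have density and $[\Gamma]=[\Gamma+\Q\beta]$ (your squeezing argument is fine: for $\varepsilon\in\Gamma^{>}$ take $\alpha=\psi(\varepsilon)\in\Psi$ and $\delta'=(\varepsilon/2)'$, so $\delta'-\alpha=\varepsilon/2$), you can simply invoke the paper's Lemma~4.13 to get the $H$-asymptotic couple structure for free and then check it agrees with your formula via the Limit Lemma computation---this bypasses the case-by-case axiom check entirely.
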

\begin{proof}
This is~\cite[Lemma 9.8.4]{adamtt}. The proof uses a compactness argument.
\end{proof}

\medskip\noindent
Recall that a cut in an ordered set $S$ is simply a downward closed subset of $S$, and an element $a$ of an ordered set extending $S$ is said to realize the cut $C$ in $S$ if $C<a<S\setminus C$. The following Lemma~\ref{cutinclasses} is useful because it enables us to either:
\begin{enumerate}
\item add an element $\alpha$ witnessing $\psi(\alpha) = \beta$, if $\beta$ is not already in the $\Psi$-set, but is not disqualified from being in a larger $\Psi$-set by satisfying $\beta\in(\Gamma^{>})'$, or
\item add an additional archimedean class to $[\psi^{-1}(\beta)]$, if $\beta$ is already in the $\Psi$-set.
\end{enumerate}

\begin{lemma}
\label{cutinclasses}Let $C$ be a cut in $[\Gamma^{\neq}]$ and let $\beta\in \Gamma$ be such that $\beta<(\Gamma^{>})'$, $\gamma^{\dagger}\leq\beta$ for all $\gamma\in\Gamma^{\neq}$ with $[\gamma]\not\in C$, and $\beta\leq\delta^{\dagger}$ for all $\delta\in\Gamma^{\neq}$ with $[\delta]\in C$. Then there exists a divisible $H$-asymptotic couple $(\Gamma\oplus\Q\alpha,\psi^{\alpha})$ extending $(\Gamma,\psi)$, with $\alpha>0$, such that:
\begin{enumerate}
\item $[\alpha]\not\in[\Gamma^{\neq}]$ realizes the cut $C$ in $[\Gamma^{\neq}]$, $\psi^{\alpha}(\alpha) = \beta$;
\item given any embedding $i$ of $(\Gamma,\psi)$ into an $H$-asymptotic couple $(\Gamma_1,\psi_1)$ and any element $\alpha_1\in\Gamma_1^{>}$ such that $[\alpha_1]\not\in[i(\Gamma^{\neq})]$ realizes the cut $\{[i(\delta)]:[\delta]\in C\}$ in $[i(\Gamma^{\neq})]$ and $\psi_1(\alpha_1) = i(\beta)$, there is a unique extension of $i$ to an embedding $j:(\Gamma\oplus\Q\alpha,\psi^{\alpha})\to(\Gamma_1,\psi_1)$ with $j(\alpha) = \alpha_1$.
\end{enumerate}
If $(\Gamma,\psi)$ has asymptotic integration, then $(\Gamma^{\alpha},\psi^{\alpha})$ has asymptotic integration.
\end{lemma}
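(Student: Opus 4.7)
The plan is to construct $(\Gamma \oplus \Q\alpha, \psi^{\alpha})$ explicitly. Take the underlying abelian group to be the direct sum $\Gamma \oplus \Q\alpha$ with $\alpha$ a new symbol. Extend the order by declaring $\alpha > 0$ with $[\alpha]$ realizing the cut $C$: for $\gamma \in \Gamma$ and $q \in \Q^{\neq}$, the element $q\alpha + \gamma$ is positive iff $q > 0$ when $\gamma = 0$ or $[\gamma] \in C$ (so $\alpha$ dominates), and iff $\gamma > 0$ when $[\gamma] \notin C$ (so $\gamma$ dominates). Extend $\psi$ to $\psi^{\alpha}$ by the rule that $\psi^{\alpha}$ is constant on archimedean classes with $\psi^{\alpha}(\alpha) = \beta$: concretely, for $q\alpha + \gamma$ with $q \neq 0$, put $\psi^{\alpha}(q\alpha + \gamma) = \beta$ if $\gamma = 0$ or $[\gamma] \in C$ (so $[q\alpha + \gamma] = [\alpha]$), and $\psi^{\alpha}(q\alpha + \gamma) = \psi(\gamma)$ if $[\gamma] \notin C$ (so $[q\alpha + \gamma] = [\gamma]$).

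The axioms AC1 and AC2 are immediate from the construction, and HC follows from the bifurcation hypothesis combined with $H$-type on $(\Gamma, \psi)$. The main obstacle is AC3: one needs $\alpha + \beta > \psi^{\alpha}(\nu)$ for every $\nu \in (\Gamma \oplus \Q\alpha)^{\neq}$. All cases reduce trivially except when $\nu \in \Gamma^{\neq}$ with $[\nu] \in C$ and $\psi(\nu) > \beta$, where it becomes $\alpha > \psi(\nu) - \beta$; since $[\alpha]$ dominates every class in $C$, this reduces further to the claim $[\psi(\nu) - \beta] \in C$. Here the hypothesis $\beta < (\Gamma^{>})'$ enters decisively: divisibility applied to $\tfrac{1}{n}(\psi(\nu) - \beta) > 0$ for each $n \geq 1$ yields $\psi(\psi(\nu) - \beta) > \beta - \tfrac{1}{n}(\psi(\nu) - \beta)$, and a careful archimedean analysis (either $\psi(\psi(\nu) - \beta) \geq \beta$ directly, which by the contrapositive of the bifurcation forces $[\psi(\nu) - \beta] \in C$, or else $[\beta - \psi(\psi(\nu) - \beta)] < [\psi(\nu) - \beta]$, which via the bifurcation and iteration leads to contradiction) pins $[\psi(\nu) - \beta]$ into $C$.

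For the universal property, the extension $j(\gamma + q\alpha) := i(\gamma) + q\alpha_1$ is forced by $j|_\Gamma = i$ and $j(\alpha) = \alpha_1$. Order preservation follows because $[\alpha_1]$ realizes the corresponding cut in $[i(\Gamma^{\neq})]$, so the sign rule on $q\alpha + \gamma$ transports to the sign rule on $q\alpha_1 + i(\gamma)$. Commutativity with $\psi$ follows because $\psi_1$ is constant on archimedean classes by $H$-type, so it necessarily agrees with $\psi^{\alpha}$ under the same case-split formulas.

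Finally, for the asymptotic integration clause, given $\gamma + q\alpha$ with $q \neq 0$, any preimage $\xi$ under $\prime$ has $\alpha$-coefficient $q$, so write $\xi = q\alpha + \delta$ with $\delta \in \Gamma$; then the condition $\xi' = q\alpha + \gamma$ reduces to $\delta + \psi^{\alpha}(\xi) = \gamma$. The two natural candidates are $\delta = \gamma - \beta$ (valid when $\gamma = \beta$ or $[\gamma - \beta] \in C$, making $\psi^{\alpha}(\xi) = \beta$) and $\delta = \int \gamma$ (valid when $[\int \gamma] \notin C$, making $\psi^{\alpha}(\xi) = \psi(\delta) = s(\gamma)$, and using the Integral Identity $\gamma = \int \gamma + s(\gamma)$). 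A case analysis using the Fixed Point Identity (Lemma~\ref{fixedpointidentity})---specifically that $\psi(\gamma - \beta) = \beta \iff s(\gamma) = \beta$---together with AC1 applied to the decomposition $\gamma - \beta = \int\gamma + (s(\gamma) - \beta)$, shows that at least one candidate is compatible with its archimedean requirement.
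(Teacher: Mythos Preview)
The paper does not actually prove this lemma: it simply cites it as a variant of \cite[Lemma~2.15]{liouville}. Your proposal supplies the direct construction that underlies the cited result, and the overall plan---build $\Gamma\oplus\Q\alpha$ with $[\alpha]$ realising $C$, set $\psi^{\alpha}(\alpha)=\beta$, verify the axioms, then check the universal property and asymptotic integration by hand---is correct and standard.

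That said, your AC3 paragraph has a gap. You only verify the instance with $\mu=\alpha$, namely $\alpha+\beta>\psi^{\alpha}(\nu)$; but AC3 must hold for \emph{every} $\mu>0$ in $\Gamma\oplus\Q\alpha$. For $\mu\in\Gamma^{>}$ this is immediate from $\beta<(\Gamma^{>})'$ and the old AC3. The nontrivial remaining case is $\mu=\gamma+q\alpha$ with $\gamma>0$, $[\gamma]\notin C$, and $q<0$: here one must show $[\gamma'-\psi^{\alpha}(\nu)]\notin C$, which follows from the same divisibility trick (apply AC3 to $\tfrac{1}{n}\gamma$) giving $[\psi^{\alpha}(\nu)-\psi(\gamma)]<[\gamma]$, whence $[\gamma'-\psi^{\alpha}(\nu)]=[\gamma]\notin C$. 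A smaller issue: in your ``hard case'' the dichotomy on $\psi(\psi(\nu)-\beta)\geq\beta$ versus $<\beta$ does not cleanly handle equality, since the contrapositive of the bifurcation hypothesis only yields $[\psi(\nu)-\beta]\in C$ from a \emph{strict} inequality $\psi(\epsilon)>\beta$. It is cleaner to argue by contradiction: if $\epsilon:=\psi(\nu)-\beta$ had $[\epsilon]\notin C$ then $\psi(\epsilon)\leq\beta$; your inequality then gives $[\beta-\psi(\epsilon)]<[\epsilon]$, so $[\psi(\nu)-\psi(\epsilon)]=[\epsilon]$, contradicting the standard estimate $[\psi(a)-\psi(b)]<[a-b]$ (valid for $[a]\neq[b]$ via AC3 on $\tfrac{1}{n}|a-b|$), which here gives $[\psi(\nu)-\psi(\epsilon)]<[\epsilon]$ since $[\nu]\in C<[\epsilon]$. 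Your universal property and asymptotic integration arguments are fine.
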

\begin{proof}
This is a variant of~\cite[Lemma 2.15]{liouville}.
\end{proof}

\medskip\noindent
For the special case of $C=\emptyset$ and $\beta$ a gap in $(\Gamma,\psi)$, Lemma~\ref{cutinclasses} gives:

\begin{cor}[Making the gap become $\max\Psi$]
\label{gaptomax}
Let $\beta\in\Gamma$ be a gap in $(\Gamma,\psi)$. Then there exists an $H$-asymptotic couple $(\Gamma+\Q\alpha,\psi^{\alpha})$ extending $(\Gamma,\psi)$, such that:
\begin{enumerate}
\item $0<q\alpha<\Gamma^{>}$ for all $q>0$, and $\psi^{\alpha}(\alpha)=\beta$;
\item for any embedding $i$ of $(\Gamma,\psi)$ into a divisible $H$-asymptotic couple $(\Gamma_1,\psi_1)$ and any $\alpha_1\in\Gamma_1^{>}$ with $\psi_1(\alpha_1) = i(\beta)$, there is a unique extension of $i$ to an embedding $j:(\Gamma+\Q\alpha,\psi^{\alpha})\to(\Gamma_1,\psi_1)$ with $j(\alpha) = \alpha_1$.
\end{enumerate}
\end{cor}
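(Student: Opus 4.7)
The plan is to deduce Corollary~\ref{gaptomax} directly from Lemma~\ref{cutinclasses} by specializing to the empty cut $C = \emptyset$ in $[\Gamma^{\neq}]$. First I would verify that the hypotheses of Lemma~\ref{cutinclasses} are met with this choice of $C$ and the given $\beta$. Because $\beta$ is a gap, $\Psi < \beta < (\Gamma^{>})'$, so $\beta < (\Gamma^{>})'$ is immediate, and $\gamma^{\dagger} \in \Psi < \beta$ for every $\gamma \in \Gamma^{\neq}$, confirming $\gamma^{\dagger} \leq \beta$ whenever $[\gamma] \notin C$ (which here is every $\gamma$). The condition $\beta \leq \delta^{\dagger}$ for $[\delta] \in C$ holds vacuously. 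Lemma~\ref{cutinclasses} then produces a divisible $H$-asymptotic couple $(\Gamma \oplus \Q\alpha, \psi^{\alpha})$ extending $(\Gamma,\psi)$ with $\alpha > 0$, $\psi^{\alpha}(\alpha) = \beta$, and $[\alpha]$ realizing the empty cut in $[\Gamma^{\neq}]$. Unpacking the latter gives $[\alpha] < [\gamma]$ for every $\gamma \in \Gamma^{\neq}$, which is precisely the condition $0 < q\alpha < \Gamma^{>}$ for all $q > 0$ required by (1).

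For (2), Corollary~\ref{gaptomax} assumes only that $\alpha_1 \in \Gamma_1^{>}$ with $\psi_1(\alpha_1) = i(\beta)$, whereas the corresponding clause of Lemma~\ref{cutinclasses} additionally requires that $[\alpha_1] \notin [i(\Gamma^{\neq})]$ realize the empty cut in $[i(\Gamma^{\neq})]$, i.e.\ $[\alpha_1] < [i(\gamma)]$ for all $\gamma \in \Gamma^{\neq}$. So the essential step is to show this condition is automatic. Suppose for contradiction that $[\alpha_1] \geq [i(\gamma)]$ for some $\gamma \in \Gamma^{>}$. If $[\alpha_1] = [i(\gamma)]$, then since $(\Gamma_1,\psi_1)$ is of $H$-type, $\psi_1$ is constant on archimedean classes, so $i(\beta) = \psi_1(\alpha_1) = \psi_1(i(\gamma)) = i(\psi(\gamma))$, forcing $\beta = \psi(\gamma) \in \Psi$ and contradicting $\Psi < \beta$. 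If instead $[\alpha_1] > [i(\gamma)]$, then $|i(\gamma)| \leq |\alpha_1|$, and HC gives $i(\psi(\gamma)) = \psi_1(i(\gamma)) \geq \psi_1(\alpha_1) = i(\beta)$, hence $\psi(\gamma) \geq \beta$, again contradicting $\Psi < \beta$.

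With the cut condition verified, the embedding clause of Lemma~\ref{cutinclasses} applies verbatim and yields the unique extension $j:(\Gamma+\Q\alpha,\psi^{\alpha}) \to (\Gamma_1,\psi_1)$ with $j(\alpha) = \alpha_1$. I do not expect any genuine obstacle: the only point requiring care is the automatic verification of the archimedean cut condition on $\alpha_1$ above, and that is a routine two-case application of HC together with the defining property $\Psi < \beta$ of a gap.
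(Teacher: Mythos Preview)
Your proposal is correct and follows exactly the paper's approach: the paper states this corollary immediately after Lemma~\ref{cutinclasses} as the special case $C=\emptyset$ with $\beta$ a gap, without further justification. Your write-up is in fact more thorough than the paper's, since you explicitly verify that the archimedean-class condition on $\alpha_1$ required by Lemma~\ref{cutinclasses}(2) is automatic from $\psi_1(\alpha_1)=i(\beta)$ and $\Psi<\beta$; the paper leaves this implicit.
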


\medskip\noindent
Note that Corollary~\ref{gaptomax} is compatible with Lemma~\ref{gapdownwards} and incompatible with Lemma~\ref{gapupwards}: if $(\Gamma,\psi)$ has a gap $\beta$, then applying Corollary~\ref{gaptomax} ``decides'' that $\beta$ will be the derivative of a negative element in any extension with asymptotic integration.

\begin{lemma}[Divisible asymptotic integration closure]
\label{asymptoticintegrationclosure}
Let $(\Gamma_0,\psi_0)$ be a divisible $H$-asymptotic couple such that $\Psi$ has a largest element $\beta_0$. Then there exists a divisible $H$-asymptotic couple 
\[
(\Gamma,\psi) = (\Gamma_0\oplus\bigoplus_{n}\Q\alpha_{n+1},\psi) = (\Gamma_0\oplus\bigoplus_{n}\Q\beta_{n+1},\psi)
\]
 extending $(\Gamma_0,\psi)$ such that:
\begin{enumerate}
\item $(\Gamma,\psi)$ has asymptotic integration;
\item $s(\beta_n) = \beta_{n+1}$ and $\int\beta_n = \alpha_{n+1}$ for all $n$;
\item for any embedding $i$ of $(\Gamma_0,\psi_0)$ into a divisible $H$-asymptotic couple $(\Gamma^{\ast},\psi^{\ast})$ with asymptotic integration, there is a unique extension of $i$ to an embedding $(\Gamma,\psi)\to(\Gamma^{\ast},\psi^{\ast})$.
\end{enumerate}
\end{lemma}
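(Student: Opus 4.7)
The plan is to construct $(\Gamma,\psi)$ by iterating Lemma~\ref{addnewlargestelement} countably many times, then verify asymptotic integration in the colimit, and finally establish the universal property by inductively applying the universal property of Lemma~\ref{addnewlargestelement} at each stage.

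More precisely, I would define a chain $(\Gamma_0,\psi_0)\subseteq (\Gamma_1,\psi_1)\subseteq (\Gamma_2,\psi_2)\subseteq \cdots$ recursively: given $(\Gamma_n,\psi_n)$ with $\max\psi_n(\Gamma_n^{\neq})=\beta_n$, apply Lemma~\ref{addnewlargestelement} to adjoin $\alpha_{n+1}$ with $\alpha_{n+1}'=\beta_n$, obtaining a divisible $H$-asymptotic couple $(\Gamma_{n+1},\psi_{n+1})=(\Gamma_n+\Q\alpha_{n+1},\psi_n^{\alpha_{n+1}})$. The ``Furthermore'' clause of Lemma~\ref{addnewlargestelement} guarantees that $\psi_{n+1}(\Gamma_{n+1}^{\neq})=\psi_n(\Gamma_n^{\neq})\cup\{\beta_n-\alpha_{n+1}\}$ with $\psi_n(\Gamma_n^{\neq})<\beta_n-\alpha_{n+1}$, so setting $\beta_{n+1}:=\beta_n-\alpha_{n+1}$ keeps the iteration going. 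Set $(\Gamma,\psi):=\bigcup_n (\Gamma_n,\psi_n)$, which is a divisible $H$-asymptotic couple with $\Gamma=\Gamma_0\oplus\bigoplus_n\Q\alpha_{n+1}$.

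The verification of (2) is immediate from the construction: $\alpha_{n+1}'=\beta_n$ says $\int\beta_n=\alpha_{n+1}$, and then $s(\beta_n)=\psi(\int\beta_n)=\psi(\alpha_{n+1})=\beta_n-\alpha_{n+1}=\beta_{n+1}$. For (1), let $\gamma\in\Gamma$ be arbitrary; then $\gamma\in\Gamma_n$ for some $n$. By Lemma~\ref{trichotomy}, since $(\Gamma_n,\psi_n)$ has $\max\Psi=\beta_n$, the unique element of $\Gamma_n$ outside $(\Gamma_n^{\neq})'$ is $\beta_n$ itself. So either $\gamma\in(\Gamma_n^{\neq})'\subseteq(\Gamma^{\neq})'$, or $\gamma=\beta_n=\alpha_{n+1}'\in(\Gamma^{\neq})'$. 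Either way $\gamma\in(\Gamma^{\neq})'$, proving asymptotic integration.

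For the universal property (3), given an embedding $i\colon(\Gamma_0,\psi_0)\to(\Gamma^{\ast},\psi^{\ast})$ with $(\Gamma^{\ast},\psi^{\ast})$ having asymptotic integration, I build an extension $j_n\colon\Gamma_n\to\Gamma^{\ast}$ by induction on $n$, with $j_0=i$. At the inductive step, asymptotic integration in $(\Gamma^{\ast},\psi^{\ast})$ yields a unique $\alpha_{n+1}^{\ast}\in(\Gamma^{\ast})^{\neq}$ with $(\alpha_{n+1}^{\ast})'=j_n(\beta_n)$; then the universal property from Lemma~\ref{addnewlargestelement} extends $j_n$ uniquely to an embedding $j_{n+1}\colon\Gamma_{n+1}\to\Gamma^{\ast}$ with $j_{n+1}(\alpha_{n+1})=\alpha_{n+1}^{\ast}$. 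Taking $j:=\bigcup_n j_n$ gives the desired extension, and uniqueness follows from uniqueness at each stage together with the fact that any extension $j$ must satisfy $j(\alpha_{n+1})=\int_{\Gamma^{\ast}} j(\beta_n)$.

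The main obstacle is really just careful bookkeeping: one must confirm that the ``$\max\Psi$ exists'' property is preserved at every finite stage so that Lemma~\ref{addnewlargestelement} can be reapplied, and that no ``new'' non-derivatives sneak into the limit. Both issues are resolved by the Furthermore-clause of Lemma~\ref{addnewlargestelement} and by Lemma~\ref{trichotomy}'s bound of ``at most one'' non-derivative, which together ensure that the single obstruction at each stage is resolved at the next stage.
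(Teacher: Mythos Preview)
Your proposal is correct and follows essentially the same approach as the paper: iterate Lemma~\ref{addnewlargestelement} to build the chain $(\Gamma_n,\psi_n)$, take the union, and obtain the universal property by inductively applying the universal property of Lemma~\ref{addnewlargestelement}. The only cosmetic difference is in verifying (1): the paper argues that $\Psi$ has no maximum and that the union of a chain of gapless asymptotic couples has no gap, then invokes the trichotomy, whereas you directly check that every $\gamma\in\Gamma_n$ is a derivative in $\Gamma_{n+1}$ using the ``at most one non-derivative'' clause of Lemma~\ref{trichotomy}; both arguments are straightforward.
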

\begin{proof}
For $n\geq 0$, define $(\Gamma_{n+1},\psi_{n+1})$ to be the asymptotic couple $(\Gamma_{n}+\Q\alpha_{n+1},\psi_{n}^{\alpha_{n+1}})$ constructed in Lemma~\ref{addnewlargestelement} as an extension of $(\Gamma_n,\psi_n)$. Set $\Psi_n:=\psi_n(\Gamma_n^{\neq})$ and note that $\Psi_{n+1} = \Psi_n\cup\{\beta_0-\sum_{k=0}^{n}\alpha_{k+1}\}$ with $\max\Psi_{n+1} = \beta_0-\sum_{k=0}^{n}\alpha_{k+1}=:\beta_{n+1}$. Let $(\Gamma,\psi) = \bigcup_n(\Gamma_n,\psi_n)$ and so $\Psi = \psi(\Gamma^{\neq}) = \bigcup_n\Psi_n$. Note that $\Psi$ does not have a maximum element. Furthermore, $(\Gamma,\psi)$ does not have a gap because it is the union of a chain of asymptotic couples which don't have gaps. Thus $(\Gamma,\psi)$ has asymptotic integration.

For (3), assume by induction that we have an embedding $i_n:(\Gamma_n,\psi_n)\to(\Gamma^{\ast},\psi^{\ast})$. Since $(\Gamma^{\ast},\psi^{\ast})$ has asymptotic integration, there is a unique extension of $i_n$ to an embedding $i_{n+1}:(\Gamma_{n+1},\psi_{n+1})$ such that $i_{n+1}(\alpha_{n+1}) = \int(i_{n}(\beta_n))$ by the universal property from Lemma~\ref{addnewlargestelement}. Thus there is a unique embedding $\cup_ni_n:(\Gamma,\psi)\to(\Gamma^{\ast},\psi^{\ast})$.
\end{proof}

\medskip\noindent
Given $(\Gamma_0,\psi_0)$ as in Lemma~\ref{asymptoticintegrationclosure}, the extension $(\Gamma,\psi)$ constructed in this lemma is the unique divisible $H$-asymptotic couple with asymptotic integration extending $(\Gamma_0,\psi_0)$ which has the universal property (3) in Lemma~\ref{asymptoticintegrationclosure}.
We call this extension \textbf{the divisible asymptotic integration closure} of $(\Gamma_0,\psi_0)$. The following summarizes the relationship between the $\alpha$'s and $\beta$'s in Lemma~\ref{asymptoticintegrationclosure}, with $\beta_0 = \max\Psi_0$. 

\[
\xymatrix{
\beta_0 \ar[rr]^-{s} \ar[ddrr]_-{\int}&& \beta_1 \ar[rr]^-{s} \ar[ddrr]_-{\int}&& \beta_2 \ar[rr]^-{s} \ar[ddrr]_-{\int}&&\beta_3 \ar[rr]^-{s} \ar[ddrr]_-{\int}&& \cdots \\
&&&&&&&&\\
&& \alpha_1 \ar[uu]^-{\psi} \ar[rr]_-{\chi}&& \alpha_2 \ar[uu]^-{\psi} \ar[rr]_-{\chi}&& \alpha_3 \ar[uu]^-{\psi} \ar[rr]_-{\chi}&&\cdots \\
}
\]
The diagram illustrates the manner in which we adjoined integrals at each stage of the construction.

\begin{example}
\label{example3}
Let $(\Gamma_0,\psi_0)\subseteq (\Gamma_{\log}^{\Q},\psi)$ be such that $\Gamma_0 = \Q e_0$. 
Then $e_0 = \max\psi(\Gamma_0^{\neq})$, and by the construction in Lemma~\ref{asymptoticintegrationclosure}, $(\Gamma_{\log}^{\Q}\psi)$ is the divisible asymptotic integration closure of $(\Gamma_0,\psi_0)$. Thus if $(\Gamma',\psi')$ is any divisible $H$-asymptotic couple with asymptotic integration such that $s0>0$, then there is an embedding
\[
i:(\Gamma_{\log}^{\Q},\psi)\to (\Gamma',\psi').
\]
%
\end{example}

\begin{lemma}
\label{Nontop}Let $(\Gamma_0,\psi_0)$ be a divisible $H$-asymptotic couple with asymptotic integration. Then there exists a divisible $H$-asymptotic couple $(\Gamma,\psi) = (\Gamma_0\oplus\Q\alpha_0\oplus\bigoplus_{n}\Q\beta_0,\psi)$ extending $(\Gamma_0,\psi_0)$, such that:
\begin{enumerate}
\item $(\Gamma,\psi)$ has asymptotic integration;
\item $\psi_0(\Gamma_0^{\neq})<\beta_0<(\Gamma^{>}_0)'$, $\beta_0 = \psi(\alpha_0)$, $\beta_{n+1} = s(\beta_n)$ for all $n$;
\item for any embedding $i$ of $(\Gamma_0,\psi_0)$ into a divisible $H$-asymptotic couple $(\Gamma^*,\psi^*)$ with asymptotic integration and any $\alpha^*\in(\Gamma^*)^{<}$ such that $i(\psi_0(\Gamma_0))<\psi^*(\alpha^*)<(i(\Gamma_0)^>)'$, there is a unique extension of $i$ to an embedding $j:(\Gamma,\psi)\to (\Gamma^*,\psi^*)$ such that $j(\alpha_0) = \alpha^*$, $j(\beta_0) = \psi^*(\alpha^*)$ and $j(\beta_{k+1}) = s^k(\psi^{\ast}(\alpha^{\ast}))$.
\end{enumerate}
\end{lemma}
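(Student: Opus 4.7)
The plan is to construct $(\Gamma,\psi)$ in three stages by iterating the previously established embedding lemmas, and then derive the universal property (3) by chaining together the corresponding universal properties. Set $\Psi_0:=\psi_0(\Gamma_0^{\neq})$.

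For the construction, first apply Lemma~\ref{addgap} to $(\Gamma_0,\psi_0)$ to adjoin a formal gap $\hat\beta$, producing $(\Gamma_0+\Q\hat\beta,\psi_{\hat\beta})$ with $\Psi_0<\hat\beta<(\Gamma_0^>)'$ and unchanged $\Psi$-set. Next, apply Lemma~\ref{gapdownwards} to kill this gap from below, yielding $(\Gamma_0+\Q\hat\beta+\Q\alpha_0,\psi)$ with $\alpha_0<0$ and $\alpha_0'=\hat\beta$, so that $\beta_0:=\psi(\alpha_0)=\hat\beta-\alpha_0$. Because $|\alpha_0|$ is infinitesimal with respect to $\Gamma_0+\Q\hat\beta$ by construction, we have $\Psi_0<\beta_0<(\Gamma_0^>)'$ and $\beta_0=\max\Psi$ of this intermediate structure. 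Finally, apply Lemma~\ref{asymptoticintegrationclosure} to take the divisible asymptotic integration closure, which adjoins $\alpha_{n+1}:=\int\beta_n$ and produces $\beta_{n+1}=s(\beta_n)$ for $n\geq 0$. The resulting $(\Gamma,\psi)$ has asymptotic integration; using $\hat\beta=\beta_0+\alpha_0$ and the relations $\alpha_n=\beta_{n-1}-\beta_n$ ($n\geq 1$), the underlying group can be rewritten as $\Gamma_0\oplus\Q\alpha_0\oplus\bigoplus_{n\geq 0}\Q\beta_n$, and properties (1) and (2) follow.

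For the universal property, given $i:(\Gamma_0,\psi_0)\to(\Gamma^*,\psi^*)$ and $\alpha^*\in(\Gamma^*)^<$ as in the hypothesis, set $\beta^*:=\psi^*(\alpha^*)$ and $\hat\beta^*:=(\alpha^*)'=\alpha^*+\beta^*$. The key claim is that $\hat\beta^*$ occupies the gap position of $i(\Gamma_0)$, i.e., $i(\Psi_0)<\hat\beta^*<(i(\Gamma_0)^>)'$. The upper bound is immediate from $\alpha^*<0$. For the lower bound, observe $s^*(\hat\beta^*)=\psi^*(\int^*\hat\beta^*)=\psi^*(\alpha^*)=\beta^*$; moreover, $i$ commutes with $\int$ (hence with $s$), since both source and target have asymptotic integration, so $s^*(i(\psi_0(\gamma_0)))=i(s_0\psi_0(\gamma_0))\in i(\Psi_0)$ for each $\gamma_0\in\Gamma_0^{\neq}$. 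Thus $s^*(\hat\beta^*)=\beta^*>i(\Psi_0)\ni s^*(i(\psi_0(\gamma_0)))$; since $s^*$ is weakly increasing on $((\Gamma^*)^<)'$ (by the corollary following Lemma~\ref{fixedpointidentity}) and both $\hat\beta^*$ and $i(\psi_0(\gamma_0))$ lie in that set, we conclude $\hat\beta^*>i(\psi_0(\gamma_0))$ by contraposition.

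With the gap-position claim in hand, apply in sequence: (a) Lemma~\ref{addgap}(2) to extend $i$ uniquely to $i_1$ on $\Gamma_0+\Q\hat\beta$ sending $\hat\beta\mapsto\hat\beta^*$; (b) Lemma~\ref{gapdownwards}(2) to extend $i_1$ uniquely to $i_2$ sending $\alpha_0\mapsto\alpha^*$ (legitimate because $\alpha^*<0$ and $(\alpha^*)'=\hat\beta^*=i_1(\hat\beta)$); and (c) Lemma~\ref{asymptoticintegrationclosure}(3) to extend $i_2$ uniquely to $j:(\Gamma,\psi)\to(\Gamma^*,\psi^*)$, using the asymptotic integration of $(\Gamma^*,\psi^*)$. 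Uniqueness of $j$ follows from uniqueness at each step, and the identities $j(\beta_0)=\hat\beta^*-\alpha^*=\beta^*$ and $j(\beta_n)=(s^*)^n(\beta^*)$ follow by direct computation. The main obstacle throughout the argument will be verifying the gap-position claim; once that is granted, everything else is a routine chase through the universal properties of the three ingredient lemmas.
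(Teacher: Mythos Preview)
Your proof is correct and follows the same three-stage architecture as the paper (add a gap, then turn the resulting structure into one with $\max\Psi$, then take the asymptotic integration closure), but you make a different choice at the middle step: the paper applies Corollary~\ref{gaptomax} so that the added gap element \emph{itself} becomes $\beta_0=\max\Psi$ with $\psi(\alpha_0)=\beta_0$, whereas you apply Lemma~\ref{gapdownwards} so that $\alpha_0'=\hat\beta$ and $\beta_0=\hat\beta-\alpha_0$.

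This difference shows up in the universal-property step. In the paper's version the image of the gap element is $\beta^*=\psi^*(\alpha^*)$, and the required inequality $i(\Psi_0)<\beta^*<(i(\Gamma_0)^>)'$ is literally the hypothesis of~(3); the chain of universal properties then runs with no extra work. In your version the image of the gap element is $\hat\beta^*=(\alpha^*)'$, and you must first verify that $\hat\beta^*$ sits in gap position over $i(\Gamma_0)$. Your argument for this---reducing to $s^*(\hat\beta^*)=\beta^*>s^*(i(\Psi_0))$ and invoking the monotonicity of $s^*$ on $((\Gamma^*)^<)'$---is valid, but it is exactly the overhead that the paper's route avoids. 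So both approaches work; the paper's choice of Corollary~\ref{gaptomax} simply buys a shorter verification of~(3).
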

\begin{proof}
By Lemma~\ref{addgap}, we can extend $(\Gamma_0,\psi_0)$ to an asymptotic couple $(\Gamma_0\oplus\Q\beta_0,\psi)$ such that $\beta_0$ is a gap. Then by Corollary~\ref{gaptomax}, we can extend $(\Gamma_0\oplus\Q\beta_0,\psi)$ to an asymptotic couple $(\Gamma_0\oplus\Q\beta_0\oplus\Q\alpha_0,\psi)$ such that $\psi(\alpha_0) = \beta_0$. Thus $\beta_0 = \max\psi((\Gamma_0\oplus\Q\beta_0\oplus\Q\alpha_0)^{\neq})$.  Finally, we apply Lemma~\ref{asymptoticintegrationclosure} to this last asymptotic couple to obtain an asymptotic couple $(\Gamma,\psi) = (\Gamma_0\oplus\Q\beta_0\oplus\Q\alpha_0\oplus\bigoplus_{n}\Q\beta_{n+1},\psi)$ with the desired properties.
\end{proof}

\medskip\noindent
Setting $\alpha_{n+1}:=\int\beta_n = \chi\alpha_n$ in Lemma~\ref{Nontop}, we have the following configuration of the elements we adjoined to $(\Gamma_0,\psi_0)$:
\[
\xymatrix{
\beta_0 \ar[rr]^-{s} \ar[ddrr]_-{\int}&& \beta_1 \ar[rr]^-{s} \ar[ddrr]_-{\int}&& \beta_2 \ar[rr]^-{s} \ar[ddrr]_-{\int}&&\beta_3 \ar[rr]^-{s} \ar[ddrr]_-{\int}&& \cdots \\
&&&&&&&&\\
\alpha_0 \ar[uu]^-{\psi} \ar[rr]_-{\chi}&& \alpha_1 \ar[uu]^-{\psi} \ar[rr]_-{\chi}&& \alpha_2 \ar[uu]^-{\psi} \ar[rr]_-{\chi}&& \alpha_3 \ar[uu]^-{\psi} \ar[rr]_-{\chi}&&\cdots \\
}
\]
The top row of the above diagram is a ``copy of $\N$'' that has been added to the top of $\Psi_0$, i.e., $\Psi = \Psi_0\cup\{\beta_0,\beta_1,\ldots\}$ with $\Psi_0<\beta_0<\beta_1<\beta_2<\cdots$. The bottom row is a sequence of increasingly smaller and smaller elements (in the sense that $[\Gamma_0^{\neq}]>[\alpha_0]>[\alpha_1]>\cdots$) which serve as ``witnesses to the top row''.

\medskip\noindent
In the next lemma, we iterate the construction given by Lemma~\ref{Nontop} to add a ``copy of $\Z$'' to the top of the $\Psi$-set.

\begin{lemma}
\label{Zontop}Suppose $(\Gamma,\psi)$ has asymptotic integration and $\Gamma$ is divisible. Then there is a divisible $H$-asymptotic couple $(\Gamma_{\diamond},\psi_{\diamond})\supseteq(\Gamma,\psi)$ with a family $(\beta_k)_{k\in\Z}$ in $\Psi_{\diamond}$ such that:
\begin{enumerate}
\item $(\Gamma_{\diamond},\psi_{\diamond})$ has asymptotic integration;
\item $\Psi<\beta_0$, and $s(\beta_k) = \beta_{k+1}$ for all $k$;
\item for any embedding $i:(\Gamma,\psi)\to(\Gamma^*,\psi^*)$ into a divisible $H$-asymptotic couple with asymptotic integration and any family $(\beta_k^*)_{k\in\Z}$ in $\Psi^*$ with $i(\Psi)<\beta_0^*$, and $s(\beta_k^*) = \beta_{k+1}^*$ for all $k$, there is a unique extension of $i$ to an embedding $j:(\Gamma_{\diamond},\psi_{\diamond})\to(\Gamma^*,\psi^*)$ sending $\beta_k$ to $\beta_k^*$ for all $k$.
\end{enumerate}
\end{lemma}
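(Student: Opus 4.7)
The plan is to realize $(\Gamma_{\diamond},\psi_{\diamond})$ as the direct limit of a chain $(\Gamma^{[n]},\psi^{[n]})_{n\geq 1}$, where each $(\Gamma^{[n]},\psi^{[n]})$ is a fresh application of Lemma~\ref{Nontop} to $(\Gamma,\psi)$. Each $(\Gamma^{[n]},\psi^{[n]})$ comes with distinguished elements $\alpha^{[n]}_0,\beta^{[n]}_0,\beta^{[n]}_1,\ldots$ satisfying $\psi(\alpha^{[n]}_k)=\beta^{[n]}_k$, $s(\beta^{[n]}_k)=\beta^{[n]}_{k+1}$, and $\alpha^{[n]}_{k+1}=\chi(\alpha^{[n]}_k)$. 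I define the transition $\iota_n\colon(\Gamma^{[n]},\psi^{[n]})\to(\Gamma^{[n+1]},\psi^{[n+1]})$ via the universal property Lemma~\ref{Nontop}(3), applied with witness $\alpha^*:=\alpha^{[n+1]}_1\in(\Gamma^{[n+1]})^<$: the required conditions $\Psi<\psi^{[n+1]}(\alpha^{[n+1]}_1)=\beta^{[n+1]}_1<(\Gamma^>)'$ hold because $\beta^{[n+1]}_1\in\Psi^{[n+1]}<((\Gamma^{[n+1]})^>)'\supseteq(\Gamma^>)'$ by AC3 applied inside $\Gamma^{[n+1]}$. Commutation of $\iota_n$ with $\psi,\chi,s$ then forces $\iota_n(\alpha^{[n]}_k)=\alpha^{[n+1]}_{k+1}$ and $\iota_n(\beta^{[n]}_k)=\beta^{[n+1]}_{k+1}$, so each $\iota_n$ shifts both index families up by one.

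Set $(\Gamma_{\diamond},\psi_{\diamond}):=\varinjlim_n(\Gamma^{[n]},\psi^{[n]})$; as a directed union of divisible $H$-asymptotic couples with asymptotic integration it inherits all these properties. For $m\in\Z$, define $\beta_m\in\Psi_{\diamond}$ to be the common image of $\beta^{[n]}_{m+n}$ for any $n\geq\max(1,-m)$: the shift-by-one property of $\iota_n$ causes the $\N$-indexed chains to telescope into a single $\Z$-indexed orbit. Each $\beta^{[n]}_{m+n}$ sits above $\Psi$ in $\Gamma^{[n]}$, and $s(\beta^{[n]}_{m+n})=\beta^{[n]}_{m+n+1}$ holds in $\Gamma^{[n]}$, so in the limit $\beta_m>\Psi$ and $s(\beta_m)=\beta_{m+1}$, establishing (1) and (2).

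For (3), take $i\colon(\Gamma,\psi)\to(\Gamma^*,\psi^*)$ and $(\beta^*_k)_{k\in\Z}$ as given. I first propagate $i(\Psi)<\beta^*_0$ to $i(\Psi)<\beta^*_{-n}$ for every $n\geq 0$: if $\beta^*_{-1}\leq\gamma$ for some $\gamma\in i(\Psi)$, then monotonicity of $s^*$ on $\Psi^*\subseteq((\Gamma^*)^<)'$ (the corollary after Corollary~\ref{spsigap}) gives $\beta^*_0=s^*(\beta^*_{-1})\leq s^*(\gamma)=i(s(\gamma))\in i(\Psi)$, contradicting $\beta^*_0>i(\Psi)$; iterate. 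Now set $\alpha^*_n:=\int^*\beta^*_{-n-1}=\beta^*_{-n-1}-\beta^*_{-n}\in(\Gamma^*)^<$; by the Integral Identity $\psi^*(\alpha^*_n)=s^*(\beta^*_{-n-1})=\beta^*_{-n}$, which lies in the cut $(i(\Psi),(i(\Gamma)^>)')$ by what we just showed and AC3. Lemma~\ref{Nontop}(3) then produces the unique embedding $j_n\colon(\Gamma^{[n]},\psi^{[n]})\to(\Gamma^*,\psi^*)$ extending $i$ with $j_n(\alpha^{[n]}_0)=\alpha^*_n$. Compatibility $j_{n+1}\circ\iota_n=j_n$ reduces to $\chi^*(\alpha^*_{n+1})=\int^*\psi^*(\alpha^*_{n+1})=\int^*\beta^*_{-n-1}=\alpha^*_n$, so the $j_n$ glue to the unique $j\colon(\Gamma_{\diamond},\psi_{\diamond})\to(\Gamma^*,\psi^*)$ sending $\beta_m\mapsto\beta^*_m$.

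The main obstacle is the index bookkeeping: one must see that the shift-by-one property of the $\iota_n$'s collapses the collection of $\N$-chains into a single $\Z$-orbit in the limit, and observe that the requirement that $j$ commute with $\chi^*$ in the target forces the witnesses on the target side to be precisely $\alpha^*_n=\int^*\beta^*_{-n-1}$. Once these identifications are unwound, both existence and uniqueness reduce mechanically to iterated application of Lemma~\ref{Nontop}(3).
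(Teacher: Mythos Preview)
Your proof is correct and follows essentially the same approach as the paper: both build $(\Gamma_{\diamond},\psi_{\diamond})$ as the direct limit of a chain of fresh Lemma~\ref{Nontop} extensions of $(\Gamma,\psi)$, linked by the ``shift-by-one'' embeddings coming from the universal property of Lemma~\ref{Nontop}, and then verify the universal property (3) by producing compatible maps $j_n$ into $(\Gamma^*,\psi^*)$ via the same lemma. Your indexing conventions differ slightly (you start the chain at $n=1$ and define $\alpha^*_n:=\int^*\beta^*_{-n-1}$, whereas the paper starts at $k=0$ and sets $\alpha^*_k:=\int^*\beta^*_{k-1}$), and your compatibility check is phrased as $\chi^*(\alpha^*_{n+1})=\alpha^*_n$ where the paper speaks of a ``diagram chase'', but the content is identical.
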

\begin{proof}
For each $k\geq 0$, let $(\Gamma,\psi)\subseteq (\Gamma_k,\psi_k)$ be the extension given by Lemma~\ref{Nontop}. In the terms of the diagram below the proof of Lemma~\ref{Nontop}, label the sequence of $\beta$'s and $\alpha$'s in $(\Gamma_k,\psi_k)$ as $\beta_0^k,\beta_1^k,\ldots$ and $\alpha_0^k,\alpha_1^k,\ldots$. By the universal property of Lemma~\ref{Nontop}, there is a unique embedding $j_k:(\Gamma_k,\psi_k)\to(\Gamma_{k+1},\psi_{k+1})$ such that $\alpha_0^k\mapsto\alpha_1^{k+1}$.
\[
\xymatrix{
(\Gamma_k,\psi_k) \ar[r]^-{j_k} & (\Gamma_{k+1},\psi_{k+1}) \\
(\Gamma,\psi) \ar[u] \ar[ur]& \\
}
\]
This embedding results in identifications $\beta^k_l = \beta^{k+1}_{l+1}$ and $\alpha^k_l = \alpha^{k+1}_{l+1}$ for all $l\geq 0$. Thus we may define $(\Gamma_{\diamond},\psi_{\diamond})$ as the union of the increasing chain
\[
(\Gamma,\psi)\subseteq (\Gamma_0,\psi_0)\subseteq (\Gamma_1,\psi_1) \subseteq (\Gamma_2,\psi_2)\subseteq\cdots
\]
In $(\Gamma_{\diamond},\psi_{\diamond})$, we define $\beta_k:=\beta^0_k$ for $k\geq 0$ and $\beta_k:=\beta^{-k}_0$ for $k<0$. Furthermore we also define $\alpha_k:=\textstyle\int\beta_{k-1}$ for all $k$. The following table illustrates the identifications of the $\beta$'s in this increasing union, with elements in the same column being identified:
\[
\begin{array}{cccccccc}
\text{in }(\Gamma_{\diamond},\psi_{\diamond}): & \cdots & \beta_{-2} & \beta_{-1} & \beta_0 & \beta_1 & \beta_2 & \cdots \\
\vdots && \vdots & \vdots & \vdots & \vdots & \vdots & \\
\text{in }(\Gamma_2, \psi_2): && \beta_0^2 & \beta_1^2 & \beta_2^2 & \beta_3^2 & \beta_4^2 & \cdots \\
\text{in }(\Gamma_1,\psi_1): &&& \beta_0^1 & \beta_1^1 & \beta_2^1 & \beta_3^1 & \cdots \\
\text{in }(\Gamma_0,\psi_0): &&&& \beta_0^0 & \beta_1^0 & \beta_2^0 & \cdots \\
\end{array}
\]

The asymptotic couple $(\Gamma_{\diamond},\psi_{\diamond})$ has asymptotic integration since each $(\Gamma_k,\psi_k)$ has asymptotic integration. Furthermore, $\beta_0 = \beta_0^0>\Psi$ by Lemma~\ref{Nontop}. Also $s(\beta_l) = \beta_{l+1}$ for all $l\in\Z$. Indeed, if $l\geq 0$, then this is evident already in $(\Gamma_0,\psi_0)$. If $l<0$, then this can be observed in $(\Gamma_{-l},\psi_{-l})$ as $s(\beta_0^{-l}) = \beta_1^{-l}$.

Next, suppose $i:(\Gamma,\psi)\to(\Gamma^*,\psi^*)$ is an embedding into a divisible $H$-asymptotic couple with asymptotic integration and there is a family $(\beta_k^*)_{k\in\Z}$ in $\Psi^*$ with $i(\Psi)<\beta_0^*$ and $s(\beta_k^*) = \beta_{k+1}^*$ for all $k$. Since $si(\Psi)\subseteq i(\Psi)$, it follows that $i(\Psi)<\beta_k^*$ for all $k\in\Z$. Define the auxiliary $(\alpha^*_k)_{k\in\Z}$ in $\Gamma^*$ by $\alpha^*_k:= \int\beta^*_{k-1}$. Then we have $\psi(\alpha^*_k) = \beta^*_k$.

Next, for each $k\geq 0$, let $i_k:(\Gamma_k,\psi_k)\to (\Gamma^*,\psi^*)$ be the embedding given by Lemma~\ref{Nontop} with $i_k(\alpha_0^k) = \alpha^*_{-k}$.
\[
\xymatrix{
(\Gamma_k,\psi_k) \ar[r]^-{i_k}& (\Gamma^*,\psi^*) \\
(\Gamma,\psi) \ar[u] \ar[ur]_-{i}
}
\]
In order to show that this embedding extends to an embedding of $(\Gamma_{\diamond},\psi_{\diamond})$, we must show that $i_k\subseteq i_{k+1}$. It suffices to prove that $i_{k+1}(\beta_0^k) = i_k(\beta_0^k)$ and $i_{k+1}(\alpha_l^k) = i_k(\alpha_l^k)$ for all $l\geq 0$ which follows from a relatively straightforward diagram chase. Thus we get an embedding $j = \cup_ki_k:(\Gamma_{\diamond},\psi_{\diamond})\to(\Gamma^*,\psi^*)$.

\[
\xymatrix{
(\Gamma_{\diamond},\psi_{\diamond}) \ar[drr]^-{\cup_ki_k}\\
\vdots \ar[u] && (\Gamma^*,\psi^*) \\
(\Gamma_1,\psi_1) \ar[u] \ar[urr]^-{i_1}\\
(\Gamma_0,\psi_0) \ar[u] \ar[uurr]^-{i_0} \\
(\Gamma,\psi) \ar[u] \ar[uuurr]_-{i}\\
}
\]
It remains to prove uniqueness of $j$. Suppose $j':(\Gamma_{\diamond},\psi_{\diamond})\to(\Gamma^*,\psi^*)$ is an arbitrary embedding such that $j'(\beta_k) = \beta^*_k$ for all $k\in\Z$. It suffices to show that $j'|_{\Gamma_k} = i_k$ for all $k\geq 0$. I.e., $j'(\alpha_{-k}) = j'(\alpha_0^k) = i_k(\alpha_0^k) = \alpha^*_{-k}$. Integrating the expression $j'(\beta_{k-1}) = \beta_{k-1}^*$ yields
\[
\alpha_k^* = \textstyle\int j'(\beta_{k-1}) = j'(\textstyle\int\beta_{k-1}) = j'(\alpha_k). \qedhere
\]
\end{proof}

\medskip\noindent
The following lemma allows us to insert a ``copy of $\Z$'' into the middle or bottom of the $\Psi$-set of a divisible $H$-asymptotic couple with asymptotic integration in a canonical way. This \emph{was} the most tricky of the new embedding lemmas (\ref{asymptoticintegrationclosure},~\ref{Nontop},~\ref{Zontop},~\ref{Zinmiddle}) to establish; the decisive point in the proof is to pick an element $a^{\star}\in\Psi\setminus B$ and use it as in that proof.

\begin{lemma}
\label{Zinmiddle}Suppose $(\Gamma,\psi)$ is an $H$-asymptotic couple with asymptotic integration and $\Gamma$ is divisible. Let $B$ be a nonempty downward closed subset of $\Psi$ such that $s(B)\subseteq B$ and $B\neq \Psi$. Then there is a divisible $H$-asymptotic couple $(\Gamma_B,\psi_B)\supseteq (\Gamma,\psi)$ with a family $(\beta_k)_{k\in\Z}$ in $\Psi_B$ satisfying the following conditions:
\begin{enumerate}
\item $(\Gamma_B,\psi_B)$ has asymptotic integration;
\item $B<\beta_k<\Gamma^{>B}$, and $s(\beta_k) = \beta_{k+1}$ for all $k$;
\item for any embedding $i:(\Gamma,\psi)\to (\Gamma^*,\psi^*)$ into a divisible $H$-asymptotic couple with asymptotic integration and any family $(\beta_k^*)_{k\in\Z}$ in $\Psi^*$ such that $i(B)<\beta^*_k<i(\Gamma^{>B})$ and $s(\beta^*_k) = \beta^*_{k+1}$ for all $k$, there is a unique extension of $i$ to an embedding $(\Gamma_B,\psi_B)\to(\Gamma^*,\psi^*)$ sending $\beta_k$ to $\beta^*_k$ for all $k$.
\end{enumerate}
\end{lemma}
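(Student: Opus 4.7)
The approach is to adapt the proof of Lemma~\ref{Zontop}, modified so that the $\Z$-indexed chain $(\beta_k)$ lies in the narrower cut $B<x<\Gamma^{>B}$ rather than above all of $\Psi$. Begin by picking $a^{\star}\in\Psi\setminus B$, which exists by the hypothesis $B\neq\Psi$; by downward closure of $B$ we have $a^{\star}>B$, and by the $s$-closure of $B$ together with Lemma~\ref{successorincreasing}, $s^k(a^{\star})\in\Psi\setminus B$ with $s^k(a^{\star})>B$ for all $k\geq 0$ (otherwise $a^{\star}<s(a^{\star})<\cdots<s^k(a^{\star})\in B$ would force $a^{\star}\in B$ by downward closure).

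The key step is to establish a local analog of Lemma~\ref{Nontop} in this setting: produce a divisible $H$-asymptotic couple $(\Gamma^+,\psi^+)\supseteq(\Gamma,\psi)$ with asymptotic integration, containing $\alpha_0\in\Gamma^+$ and $\beta_0,\beta_1,\beta_2,\ldots\in\Psi^+$ such that $B<\beta_n<\Gamma^{>B}$, $\psi^+(\alpha_0)=\beta_0$, and $\beta_{n+1}=s(\beta_n)$, with a universal property analogous to Lemma~\ref{Nontop}(3). I would build $(\Gamma^+,\psi^+)$ in three substeps mirroring Lemma~\ref{Nontop}: (i) a variant of Lemma~\ref{addgap} adds an element $\beta_0$ realizing the cut $B<x<\Gamma^{>B}$; (ii) Corollary~\ref{gaptomax} (or Lemma~\ref{cutinclasses} with $C=\emptyset$ applied to the output of (i)) adjoins $\alpha_0>0$ with $\psi^+(\alpha_0)=\beta_0$ and $[\alpha_0]$ in the appropriate cut of the archimedean class set; (iii) Lemma~\ref{asymptoticintegrationclosure} restores asymptotic integration and furnishes the $\beta_n$ for $n\geq 1$ together with their integrals.

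Granted this local analog of Lemma~\ref{Nontop}, the rest closely parallels the proof of Lemma~\ref{Zontop}. Iterate the construction to obtain a chain $(\Gamma,\psi)\subseteq(\Gamma_0,\psi_0)\subseteq(\Gamma_1,\psi_1)\subseteq\cdots$ with canonical shift embeddings $j_k\colon(\Gamma_k,\psi_k)\to(\Gamma_{k+1},\psi_{k+1})$ sending $\alpha_0^k$ to $\alpha_1^{k+1}$ by the universal property of the local Lemma~\ref{Nontop} analog. Set $(\Gamma_B,\psi_B):=\bigcup_k(\Gamma_k,\psi_k)$, and define $\beta_k:=\beta_k^0$ for $k\geq 0$ and $\beta_k:=\beta_0^{-k}$ for $k<0$. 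Properties (1) and (2) of the lemma are inherited from each finite stage, and the universal property (3) is verified by the same diagram chase as at the end of the proof of Lemma~\ref{Zontop}, using the universal property of each component extension.

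The main obstacle is substep (i): realizing the cut $B<x<\Gamma^{>B}$ inside an $H$-asymptotic couple extension while keeping the rest of the construction intact. This is exactly where $a^{\star}$ is used decisively, playing a role analogous to the external parameter $\gamma$ in Corollary~\ref{spsigap} and the slogan ``$s(x)=\psi(x-\text{gap that does not exist})$''. The element $a^{\star}\in\Psi\cap\Gamma^{>B}$ serves as a concrete interior witness above $B$ in $\Psi$, and a compactness argument analogous to the one behind Lemma~\ref{addgap} (see~\cite{mt}) then produces the required element in the prescribed cut while preserving the $H$-asymptotic couple axioms. Once substep (i) is in place, substeps (ii)--(iii) and the iteration proceed with no genuinely new ideas.
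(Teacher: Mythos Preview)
Your outline diverges substantially from the paper's argument and, as written, has genuine gaps.

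The paper does \emph{not} iterate a local analogue of Lemma~\ref{Nontop}. Instead, having fixed $a^\star\in\Psi\setminus B$, it observes (via Lemma~\ref{succid}) that in any extension with the required $(\beta_k)$ one has $\psi_B(a^\star-\beta_k)=s(\beta_k)=\beta_{k+1}$; hence setting $\alpha_k:=\beta_{k-1}-a^\star$ yields $\psi_B(\alpha_k)=\beta_k$. From this one reads off explicit closed formulas for the ordering and for $\psi_B$ on $\Gamma_B:=\Gamma+\sum_k\Q\alpha_k$, depending only on whether $\psi(\gamma)\in B$ for the $\Gamma$-component $\gamma$ of an element. The actual construction is then done in one stroke: take an elementary extension of $(\Gamma,\psi)$ containing a suitable family $(\beta_k)$ (compactness), form the $\alpha_k$ there, and restrict. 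The universal property is immediate, since any embedding is forced to send $\alpha_k\mapsto\beta_{k-1}^*-i(a^\star)$, and the explicit formulas show this assignment extends to an embedding.

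Your three-substep plan breaks down concretely. In (ii), taking $C=\emptyset$ in Lemma~\ref{cutinclasses} (equivalently, Corollary~\ref{gaptomax}) would place $[\alpha_0]$ below all of $[\Gamma^{\neq}]$ and hence force $\psi(\alpha_0)\geq\Psi$, not $\psi(\alpha_0)=\beta_0$ in the middle of $\Psi$; the correct cut is the one determined by $\psi^{-1}(\Psi\setminus B)$. More seriously, after (i)--(ii) the new $\Psi$-set still contains all of $\Psi\setminus B$ \emph{above} $\beta_0$, so there is no $\max\Psi$ and Lemma~\ref{asymptoticintegrationclosure} simply does not apply; nothing in your outline then produces the later $\beta_n$ in the prescribed cut rather than letting $s(\beta_0)$ land in $\Psi\setminus B$. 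You are right that $a^\star$ is decisive, but its role is not to anchor a compactness argument for substep (i): it is the translation $\alpha_k=\beta_{k-1}-a^\star$ that collapses the entire construction to an explicit one-step extension and makes the uniqueness in (3) transparent.
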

\begin{proof}
To motivate the construction of $(\Gamma_B,\psi_B)$ as required, suppose $(\Gamma_B,\psi_B)\supseteq(\Gamma,\psi)$ is an $H$-asymptotic couple with asymptotic integration and $(\beta_k)_{k\in\Z}$ a family in $\Psi_B$ such that $B<\beta_k<\Gamma^{>B}$ and $s(\beta_k) = \beta_{k+1}$ for all $k$.
Fix any $a^{\star}\in \Psi\setminus B$. Let $k\in\Z$ and note that by Corollary~\ref{succid},
\[
\psi_B(a^{\star}-\beta_k) = s(\beta_k) = \beta_{k+1}.
\]
Therefore setting $\alpha_k := \beta_{k-1}-a^{\star}$, we have $\psi_B(\alpha_k) = \beta_k$. Then $\alpha_k<0$ and
\[
[\gamma_1]<[\alpha_k]<[\alpha_{k+1}]<[\gamma_2]
\]
whenever $\gamma_1,\gamma_2\in\Gamma$ such that $\psi(\gamma_1)\in B$ and $\psi(\gamma_2)\in \Psi\setminus B$. Thus it follows that for $\gamma\in\Gamma$, $i_1<\cdots<i_n$ and $q_1,\ldots,q_n\in\Q$, we have
\[
\gamma+q_{1}\alpha_{i_1}+\cdots+q_n\alpha_{i_n}>_B0 \Longleftrightarrow \begin{cases} \gamma>0 &\text{if $\psi(\gamma)\in B$,} \\ \gamma>0 & \text{if $\psi(\gamma)\not\in B$ and $n=0$,}\\ q_1<0 & \text{if $\psi(\gamma)\not\in B$ and $n\geq 1$.} \\  \end{cases}
\]
and the $\psi_B$-value of such an element is uniquely determined:
\[
\psi_B(\gamma+q_{1}\alpha_{i_1}+\cdots+q_n\alpha_{i_n}) := \begin{cases} \psi(\gamma) &\text{if $\psi(\gamma)\in B$,} \\ \psi(\gamma) & \text{if $\psi(\gamma)\not\in B$ and $n=0$,}\\  \beta_{i_1}  & \text{if $\psi(\gamma)\not\in B$ and $n\geq 1$.} \\  \end{cases}
\]

Furthermore, note that $\alpha_k+\psi(\alpha_k) = \alpha_k+\beta_k = \alpha_k+a^{\star}+\alpha_{k+1}$. Rearranging terms gives us $\beta_{k-1} = a_k+a^{\star} = \alpha_k-\alpha_{k+1}+\psi(\alpha_k)$. Since $[\alpha_k]>[\alpha_{k+1}]$, it follows that $\psi(\alpha_k-\alpha_{k+1}) = \psi(\alpha_{k})$. Thus $\int\beta_{k-1} = \alpha_k-\alpha_{k+1}$, and so $s(\beta_{k-1}) = \psi(\alpha_k-\alpha_{k+1}) = \psi(\alpha_k) = \beta_k$. Furthermore, $\int\psi(\alpha_k-\alpha_{k+1}) = \int\psi(\alpha_k) = \alpha_{k+1}-\alpha_{k+2}$ implies that $\chi(\alpha_k-\alpha_{k+1}) = \chi(\alpha_k) = \alpha_{k+1}-\alpha_{k+2}$. Here is a picture of what is going on:

\[
\xymatrix{
\beta_{k-1} \ar[rrr]^-{s} \ar[rrdd]^-{\int}&&& \beta_{k} \ar[rrr]^-{s} \ar[rrdd]^-{\int}&&& \beta_{k+1} \\
\alpha_{k-1} \ar[u]_-{\psi} \ar[rrd]_-{\chi}&&& \alpha_{k} \ar[u]_-{\psi} \ar[rrd]_-{\chi} &&& \alpha_{k+1} \ar[u]_-{\psi}\\
&&  \alpha_k-\alpha_{k+1} \ar[uur]^-{\psi} \ar[rrr]^-{\chi} &&& \alpha_{k+1}-\alpha_{k+2} \ar[uur]^-{\psi} & \\
}
\]

Next, to actually obtain $(\Gamma_B,\psi_B)$, by compactness we take an elementary extension $(\Gamma_{\star},\psi_{\star})$ of $(\Gamma,\psi)$ with a family $(\beta_k)_{k\in\Z}$ in $\Psi_{\star}$ such that $B<\beta_k<\Gamma^{>B}$ and $s(\beta_k) = \beta_{k+1}$ for all $k$. Take $a^*\in\Psi\setminus B$ and define $\alpha_k := \beta_{k-1}-a^{\star}$. Set $\Gamma_B:= \Gamma+\sum_k\Q\alpha_k$. By the above observations, $(\Gamma_B,\psi_{\star}|_{\Gamma_B})$ is a divisible $H$-asymptotic couple with the desired properties.
\end{proof}

\medskip\noindent
Note that it follows from the proof of Lemma~\ref{Zinmiddle} that $\Psi_B = \Psi\cup\{\beta_k:k\in\Z\}$. 

\begin{lemma}
Let $i:\Gamma\to G$ be an embedding of divisible ordered abelian groups inducing a bijection $[\Gamma]\to[G]$. Then there is a unique function $\psi_G:G^{\neq}\to G$ such that $(G,\psi_G)$ is an $H$-asymptotic couple and $i:(\Gamma,\psi)\to (G,\psi_G)$ is an embedding.
\end{lemma}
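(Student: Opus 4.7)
The plan is to use the fact (noted just before Lemma~\ref{ivp}) that for an $H$-asymptotic couple $\psi$ is constant on archimedean classes, so $\psi$ factors as a function $[\Gamma^{\ne}]\to\Gamma$. Since $i$ is an ordered group embedding, the induced bijection $[\Gamma]\to[G]$ is order-preserving, and we can transport this factored $\psi$ to $G$. Concretely, for $g\in G^{\ne}$ the hypothesis gives a unique class $[\gamma]\in[\Gamma^{\ne}]$ with $[i(\gamma)]=[g]$, and we set
\[
\psi_G(g)\ :=\ i(\psi(\gamma)),
\]
which is well-defined since $\psi$ is constant on $[\gamma]$. Uniqueness is then essentially forced: any $\psi'_G$ making $(G,\psi'_G)$ an $H$-asymptotic couple with $i$ an embedding is constant on archimedean classes by HC, hence agrees with $\psi_G$ on $i(\Gamma)$ and therefore everywhere.

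The bulk of the work is verifying the four axioms for $(G,\psi_G)$. AC2 is immediate from $[rg]=[g]$. For HC, $0<\alpha\le\beta$ in $G$ forces $[\alpha]\le[\beta]$, which transfers through the order-preserving bijection to $[\Gamma^{\ne}]$, and HC in $(\Gamma,\psi)$ followed by applying $i$ gives $\psi_G(\alpha)\ge\psi_G(\beta)$. AC1 then drops out by combining HC in $(G,\psi_G)$ with the ultrametric-style inequality $[\alpha+\beta]\le\max([\alpha],[\beta])$ valid in any ordered abelian group.

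The one step that is not purely formal is AC3, since $\alpha$ and $\psi_G(\beta)$ typically involve elements of $G$ not in $i(\Gamma)$. Given $\alpha>0$ in $G$ and $\beta\in G^{\ne}$, I would pick representatives $\gamma>0$ and $\delta$ in $\Gamma^{\ne}$ with $[i(\gamma)]=[\alpha]$ and $[i(\delta)]=[\beta]$, so $\psi_G(\alpha)=i(\psi(\gamma))$ and $\psi_G(\beta)=i(\psi(\delta))$. From $[i(\gamma)]=[\alpha]$ we get $i(\gamma)\le m\alpha$ for some $m\ge 1$. Invoking divisibility of $\Gamma$, apply AC3 in $(\Gamma,\psi)$ to the element $\gamma/(2m)>0$: since $\psi(\gamma/(2m))=\psi(\gamma)$, this yields $\gamma/(2m)+\psi(\gamma)>\psi(\delta)$, and applying $i$ gives $i(\gamma)/(2m)>i(\psi(\delta))-i(\psi(\gamma))$. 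Combining with $\alpha\ge i(\gamma)/m>i(\gamma)/(2m)$ produces $\alpha+\psi_G(\alpha)>\psi_G(\beta)$, which is AC3. That $i$ is an embedding $(\Gamma,\psi)\to(G,\psi_G)$ is immediate from the definition, completing the proof. The main obstacle is precisely this divisibility-driven slack in AC3; everything else is essentially bookkeeping via the archimedean-class bijection.
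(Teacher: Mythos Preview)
Your proof is correct. The paper does not actually give a proof of this lemma; it simply cites \cite[Lemma 2.14]{liouville}. Your argument---defining $\psi_G$ by transporting $\psi$ through the archimedean-class bijection, deducing HC and AC2 formally, obtaining AC1 from HC together with $[\alpha+\beta]\le\max([\alpha],[\beta])$, and handling AC3 by passing to $\gamma/(2m)$ to create slack---is a clean self-contained verification, and is exactly the natural approach one would expect the cited reference to take. One small remark: in your AC3 step you silently use that the representative $\gamma$ can be chosen positive; this is fine since $[\gamma]=[-\gamma]$, but it is worth saying explicitly.
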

\begin{proof}
\cite[Lemma 2.14]{liouville}.
\end{proof}

\begin{lemma}
\label{archclasslemma}Suppose $(\Gamma_0,\psi_0)\subseteq (\Gamma_1,\psi_1)$ and $(\Gamma^*,\psi^*)$ are divisible $H$-asymptotic couples, $i:(\Gamma_0,\psi_0 )\to(\Gamma^*,\psi^*)$ is an embedding and $j:\Gamma_1\to\Gamma^*$ is an ordered group embedding. Furthermore, suppose that $i = j|_{\Gamma_0}$ and $[\Gamma_0] = [\Gamma_1]$. Then $j$ is also an embedding of asymptotic couples, i.e., $j(\psi_1(\gamma)) = \psi^*(j(\gamma))$ for all $\gamma\in\Gamma_1^{\neq}$.
\end{lemma}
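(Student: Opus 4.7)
The plan is to exploit the fact that all three asymptotic couples are of $H$-type, so $\psi$ is constant on archimedean classes. The hypothesis $[\Gamma_0] = [\Gamma_1]$ will let us replace any $\gamma \in \Gamma_1^{\neq}$ by an element $\gamma_0 \in \Gamma_0^{\neq}$ in the same archimedean class, where $i$ and $\psi_0$ already control the $\psi$-value.

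Concretely, fix $\gamma \in \Gamma_1^{\neq}$. First I would pick $\gamma_0 \in \Gamma_0^{\neq}$ with $[\gamma_0] = [\gamma]$ in $\Gamma_1$, which exists by the assumption $[\Gamma_0] = [\Gamma_1]$. Since $(\Gamma_1,\psi_1)$ is of $H$-type, $\psi$ is constant on archimedean classes, so $\psi_1(\gamma) = \psi_1(\gamma_0)$. Because $(\Gamma_0,\psi_0) \subseteq (\Gamma_1,\psi_1)$ is an embedding of asymptotic couples, $\psi_1(\gamma_0) = \psi_0(\gamma_0)$. Applying $j$ and using $j|_{\Gamma_0} = i$ together with the fact that $i$ is an embedding of asymptotic couples yields
\[
j(\psi_1(\gamma)) \;=\; j(\psi_0(\gamma_0)) \;=\; i(\psi_0(\gamma_0)) \;=\; \psi^*(i(\gamma_0)) \;=\; \psi^*(j(\gamma_0)).
\]

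It remains to check $\psi^*(j(\gamma_0)) = \psi^*(j(\gamma))$. For this I would transport the archimedean comparison across $j$: from $[\gamma_0] = [\gamma]$ in $\Gamma_1$ we get $|\gamma_0| \le n|\gamma|$ and $|\gamma| \le n|\gamma_0|$ for some $n \ge 1$; applying the order-preserving group embedding $j$ (which commutes with $|\cdot|$ and with multiplication by $n$) gives $|j(\gamma_0)| \le n|j(\gamma)|$ and $|j(\gamma)| \le n|j(\gamma_0)|$, so $[j(\gamma_0)] = [j(\gamma)]$ in $\Gamma^*$. Since $(\Gamma^*,\psi^*)$ is of $H$-type, $\psi^*$ is constant on this class, giving the equality.

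There is no real obstacle here: the proof is essentially a two-line reduction to the case $\gamma \in \Gamma_0$, with the $H$-type hypothesis doing all the work on both the $\Gamma_1$-side and the $\Gamma^*$-side. The only thing worth stating carefully is that an ordered group embedding automatically preserves the relation $[\cdot] = [\cdot]$, which is a trivial calculation.
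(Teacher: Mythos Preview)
Your proof is correct and follows essentially the same approach as the paper: pick $\gamma_0\in\Gamma_0^{\neq}$ in the same archimedean class as $\gamma$, use the $H$-type hypothesis on $\Gamma_1$ to replace $\psi_1(\gamma)$ by $\psi_0(\gamma_0)$, push through $j=i$ on $\Gamma_0$, and then use that $j$ preserves archimedean classes together with the $H$-type hypothesis on $\Gamma^*$ to close the loop. Your write-up is in fact slightly more explicit than the paper's about why an ordered group embedding preserves archimedean equivalence.
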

\begin{proof}
Let $\gamma\in\Gamma_1^{\neq}$. Since $[\Gamma_0] = [\Gamma_1]$, there is $\gamma_0\in\Gamma_0^{\neq}$ such that $[\gamma] = [\gamma_0]$. By HC $\psi_1(\gamma) = \psi_1(\gamma_0)$ and so $j(\psi_1(\gamma)) = j(\psi_1(\gamma_0)) = i(\psi_0(\gamma_0))$. Since $j$ is an ordered group embedding, it also follows that $[j(\gamma)] = [i(\gamma_0)]$ in $[\Gamma^*]$. Thus $\psi^*(j(\gamma)) = \psi^*(i(\gamma_0))$. Since $i$ is an embedding of asymptotic couples, $i(\psi_0(\gamma_0)) = \psi^*(i(\gamma_0))$ and we are done.
\end{proof}

\section{The Theory $T_{\log}$}
\label{TheoryT}

\noindent
Let $L_0$ be the ``natural'' language of asymptotic couples; $L_0 = \{0,+,-,<,\psi,\infty\}$ where $0,\infty$ are constant symbols, $+$ is a binary function symbol, $-$ and $\psi$ are unary function symbols and $<$ is a binary relation symbol. We consider an asymptotic couple $(\Gamma,\psi)$ as an $L_0$-structure with underlying set $\Gamma_{\infty}$ and the obvious interpretation of the symbols of $L_0$, with $\infty$ as a default value:
\[
-\infty = \gamma+\infty = \infty+\gamma=\infty+\infty = \psi(0) = \psi(\infty) = \infty
\]
for all $\gamma\in\Gamma$.

\medskip\noindent
Let $T_0$ be the $L_0$-theory whose models are the divisible $H$-asymptotic couples with asymptotic integration such that
\begin{itemize}
\item $\Psi$ as an ordered subset of $\Gamma$ has a least element $s0>0$,
\item $\Psi$ as an ordered subset of $\Gamma$ is a successor set and each $\alpha\in\Psi$ has immediate successor $s\alpha$, and
\item $\gamma\mapsto s\gamma:\Psi\to\Psi^{>s0}$ is a bijection.
\end{itemize}
It is clear that $(\Gamma_{\log},\psi)$ and $(\Gamma_{\log}^{\Q},\psi)$ are models of $T_0$. For a model $(\Gamma,\psi)$ of $T_0$, we define the function $p:\Psi^{>s0}\to\Psi$ to be the inverse to the function $\gamma\mapsto s\gamma:\Psi\to\Psi^{>s0}$. We extend $p$ to a function $\Gamma_{\infty}\to\Gamma_{\infty}$ by setting $p(\alpha) := \infty$ for $\alpha\in\Gamma_{\infty}\setminus\Psi^{>s0}$.

\medskip\noindent
Next let $L = L_0\cup\{s,p,\delta_1,\delta_2,\delta_3,\ldots\}$ where $s$, $p$ and $\delta_n$ for $n\geq 1$ are unary function symbols. All models of $T_0$ are considered as $L$-structures in the obvious way, again with $\infty$ as a default value, and with $\delta_n$ interpreted as division by $n$. 

\medskip\noindent
We let $T_{\log}$ be the $L$-theory whose models are the models of $T_0$. By adding function symbols $s,p,\delta_1,\delta_2,\ldots$ we have guaranteed the following:

\begin{lemma}
\label{universalaxiomatizationforT}
$T_{\log}$ has a universal axiomatization.
\end{lemma}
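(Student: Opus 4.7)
My plan is to apply the \L{}o\'{s}--Tarski theorem: since $T_{\log}$ is a first-order $L$-theory, it admits a universal axiomatization iff its class of models is closed under substructures. So I would take $(\Gamma,\psi)\models T_{\log}$ and an arbitrary $L$-substructure $(\Gamma',\psi')$, and verify $(\Gamma',\psi')\models T_{\log}$.

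First dispose of the straightforward parts. Closure of $\Gamma'$ under $0,+,-$ and each $\delta_n$ makes it a divisible ordered abelian subgroup of $\Gamma$, and the axioms (AC1), (AC2), (AC3), (HC) are all $\forall$-sentences and so transfer immediately. Asymptotic integration follows from the Integral Identity (Lemma~\ref{integralidentity}): for any $\alpha\in\Gamma'$, $\int\alpha=\alpha-s\alpha$ lies in $\Gamma'$ by closure under $-$ and $s$, is nonzero in $(\Gamma,\psi)$, and satisfies $(\int\alpha)'=\alpha$.

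The heart of the argument is the identity $\Psi'=\Psi\cap\Gamma'$, where $\Psi':=\psi((\Gamma')^{\ne})$. The inclusion $\Psi'\subseteq\Psi\cap\Gamma'$ is immediate. For the converse, let $\gamma\in\Psi\cap\Gamma'$. If $\gamma=s0$, then $\gamma=\psi(s0)$ by Lemma~\ref{s0lemma} and $s0\in(\Gamma')^{\ne}$. Otherwise $\gamma>s0$, so $\gamma\in\Psi^{>s0}$; then $p(\gamma)\in\Gamma'$ by closure under $p$, and $p(\gamma)\in\Psi$ in the ambient model. Applying the Integral Identity at $p(\gamma)$ gives $\int p(\gamma)=p(\gamma)-s(p(\gamma))=p(\gamma)-\gamma\in(\Gamma')^{\ne}$ and
\[
\psi(p(\gamma)-\gamma)\;=\;s(p(\gamma))\;=\;\gamma,
\]
so $\gamma\in\Psi'$. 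This is where the function symbols $s$ and especially $p$ earn their place in the language.

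Once $\Psi'=\Psi\cap\Gamma'$ is in hand, the remaining axioms come out for free: $s0\in\Psi'$ is the least element and is positive; for $\alpha\in\Psi'$ the element $s\alpha$ lies in $\Psi\cap\Gamma'=\Psi'$, and remains the immediate successor of $\alpha$ in $\Psi'\subseteq\Psi$; injectivity of $s$ on $\Psi'$ is inherited from the ambient model; and for $\beta\in(\Psi')^{>s0}$, $p(\beta)\in\Psi\cap\Gamma'=\Psi'$ witnesses surjectivity. The main obstacle I anticipated was precisely the nontrivial inclusion $\Psi\cap\Gamma'\subseteq\Psi'$---equivalently, ensuring surjectivity of $s$ is preserved on passing to substructures---and this is resolved by the Integral Identity trick above.
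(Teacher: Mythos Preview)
Your proof is correct. The paper states this lemma without proof, treating it as immediate from the design of the language $L$: the function symbols $s$, $p$, and $\delta_n$ were introduced precisely so that each defining condition of $T_0$ can be written as a universal $L$-sentence. Your argument via the \L{}o\'{s}--Tarski theorem is the semantic counterpart of this and makes explicit what the paper leaves to the reader; in particular, the identity $\Psi'=\Psi\cap\Gamma'$ you isolate is exactly where the function symbol $p$ earns its keep (compare the paper's quantifier-free definition of $\Psi$ by $x=p(s(x))$ in Example~\ref{examplesofqe}).
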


\medskip\noindent
Since $T_{\log}$ has a universal axiomatization, if $(\Gamma_1,\psi_1)\models T_{\log}$ and $(\Gamma_0,\psi_0)$ is an $L$-substructure of $(\Gamma_1,\psi_1)$, then $(\Gamma_0,\psi_0)\models T_{\log}$. This fact is very convenient for our proof of Quantifier Elimination in Theorem~\ref{qe} below. In Theorem~\ref{qe}, we actually prove an algebraic variant of Quantifier Elimination, which is indeed equivalent to Quantifier Elimination in the presence of a universal axiomatization. See~\cite[Prop 4.3.28]{marker} or~\cite[Prop B.11.14]{adamtt} for details of this equivalence.

\begin{thm}[Quantifier Elimination for $T_{\log}$]
\label{qe}
Suppose that $(\Gamma_0,\psi_0)\subsetneq (\Gamma_1,\psi_1)$ and $(\Gamma^*,\psi^*)$ are models of $T_{\log}$ such that $(\Gamma^*,\psi^*)$ is $|\Gamma_1|^+$-saturated, and $i:(\Gamma_0,\psi_0)\to(\Gamma^*,\psi^*)$ is an embedding of $L$-structures. Then there is an element $\alpha\in\Gamma_1\setminus\Gamma_0$ such that $i$ extends to an embedding $(\Gamma,\psi)\to(\Gamma^*,\psi^*)$ where $(\Gamma_0,\psi_0)\subseteq(\Gamma,\psi)\subseteq(\Gamma_1,\psi_1)$ and $\alpha\in\Gamma$.
\end{thm}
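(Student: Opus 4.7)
The plan is to find $\alpha \in \Gamma_1 \setminus \Gamma_0$ and extend $i$ to an $L$-embedding of the $L$-substructure of $(\Gamma_1, \psi_1)$ generated by $\Gamma_0$ and $\alpha$, by using $|\Gamma_1|^+$-saturation of $\Gamma^*$ to realize the quantifier-free type of $\alpha$ over $i(\Gamma_0)$ and then invoking a universal property from Section~\ref{embeddinglemmas}. Note that the $T_{\log}$-axioms force $\Psi = \{s^n 0 : n \geq 1\}$ in every model, so $\Psi_0 = \Psi_1 = \Psi^*$ under the identifications coming from $i$ and the inclusion $\Gamma_0 \subseteq \Gamma_1$; consequently, for any $\alpha \in \Gamma_1^{\neq}$ both $\psi_1(\alpha)$ and $s(\alpha)$ lie in $\Psi_0 \subseteq \Gamma_0$, and the $L$-substructure generated by $\Gamma_0$ and $\alpha$ is simply $\Gamma_0 + \Q\alpha$. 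I consider two cases: either $[\Gamma_1] = [\Gamma_0]$, or some $\alpha \in \Gamma_1 \setminus \Gamma_0$ has $[\alpha] \notin [\Gamma_0]$.

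In the first case, pick any $\alpha \in \Gamma_1 \setminus \Gamma_0$. Then $[\Gamma_0 + \Q\alpha] \subseteq [\Gamma_1] = [\Gamma_0]$, so Lemma~\ref{archclasslemma} reduces the task to extending $i$ to an ordered group embedding of $\Gamma_0 + \Q\alpha$ into $\Gamma^*$. Using $|\Gamma_1|^+$-saturation of $\Gamma^*$ and density of divisible ordered abelian groups, the cut of $\alpha$ in $\Gamma_0$ is realized by some $\alpha^* \in \Gamma^*$, and the map $j\colon \gamma + q\alpha \mapsto i(\gamma) + q\alpha^*$ is an ordered group embedding, which Lemma~\ref{archclasslemma} promotes to an embedding of asymptotic couples. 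Preservation of $s$, $p$, and $\delta_n$ is automatic: the first two have values in $\Psi_0 \subseteq \Gamma_0$, and the last is $\Q$-scaling.

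In the second case, pick $\alpha \in \Gamma_1 \setminus \Gamma_0$ with $[\alpha] \notin [\Gamma_0]$; without loss of generality $\alpha > 0$. Set $\beta := \psi_1(\alpha) \in \Psi_0$ and $C := \{[\gamma] : \gamma \in \Gamma_0^{\neq}, [\gamma] < [\alpha]\}$, which by HC satisfies the hypotheses of Lemma~\ref{cutinclasses} with respect to $\beta$. By the universal property of that lemma, it suffices to find $\alpha^* \in \Gamma^*$ with $\alpha^* > 0$, $\psi^*(\alpha^*) = i(\beta)$, and $[\alpha^*] \notin [i(\Gamma_0)^{\neq}]$ realizing the cut $i(C)$. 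By $|\Gamma_1|^+$-saturation, this reduces to showing finite satisfiability in $\Gamma^*$ of the type $p(x)$ specifying $\alpha^*$.

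The main obstacle is this finite satisfiability. Any finite fragment of $p(x)$ collapses to finding $x \in \Gamma^*$ with $x > 0$, $\psi^*(x) = i(\beta)$, $N|i(\gamma_L)| < x$, and $Nx < |i(\gamma_R)|$ for the \emph{innermost} $\gamma_L, \gamma_R \in \Gamma_0$ in the fragment (so $[\gamma_L] < [\alpha] < [\gamma_R]$) and some $N \geq 1$. When $\psi(\gamma_L) > \beta$ or $\psi(\gamma_R) < \beta$, the corresponding archimedean constraint holds automatically for any representative $y_\beta \in \Gamma_0$ of the $\beta$-block (for instance $y_\beta := s^n 0 - s^{n-1} 0$ when $\beta = s^n 0$), and such subcases are handled by taking $x$ to be a suitable rational multiple of $y_\beta$, $\gamma_L$, or $\gamma_R$. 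The delicate subcase is $\psi(\gamma_L) = \psi(\gamma_R) = \beta$; the explicit choice $x := \tfrac{1}{2}\bigl(N\, |i(\gamma_L)| + |i(\gamma_R)|/N\bigr)$ works, since the summand $|i(\gamma_R)|/N$ dominates (as $[i(\gamma_L)] < [i(\gamma_R)]$) and thus forces $\psi^*(x) = i(\beta)$, while $N^2 |i(\gamma_L)| < |i(\gamma_R)|$ (also from $[i(\gamma_L)] < [i(\gamma_R)]$) gives both $N|i(\gamma_L)| < x$ and $Nx < |i(\gamma_R)|$. Hence $p(x)$ is finitely satisfiable, saturation produces the desired $\alpha^*$, and Lemma~\ref{cutinclasses} delivers the required $L$-embedding extending $i$.
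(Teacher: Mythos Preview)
Your argument contains a genuine error at the very start: the claim that the $T_{\log}$-axioms force $\Psi=\{s^n0:n\geq 1\}$ in every model is false. The axioms only say that $(\Psi;<)$ is a discrete linear order with least element $s0$ on which $s$ and $p$ act as successor and predecessor. This is the first-order theory of $(\N;<,S)$, and in any sufficiently saturated model of $T_{\log}$ the set $\Psi$ contains elements not of the form $s^n0$ (nonstandard ``copies of $\Z$'' sitting above the standard part). Indeed, Lemmas~\ref{Nontop},~\ref{Zontop}, and~\ref{Zinmiddle} explicitly construct extensions of a given model in which $\Psi$ strictly grows, so $\Psi_0=\Psi_1$ can fail.

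Consequently your case split is incomplete: you have only treated the analogues of the paper's Cases~3 and~4 (where $\Psi_0=\Psi_1$). The cases $\Psi_0\subsetneq\Psi_1$ are exactly what require the new embedding Lemmas~\ref{Zontop} and~\ref{Zinmiddle}, and the paper's introduction singles out Lemma~\ref{Zinmiddle} as the main new difficulty compared to~\cite{closedasymptoticcouples}. In those cases one must adjoin an entire $\Z$-indexed family $(\beta_k)$ with $s(\beta_k)=\beta_{k+1}$ realizing the appropriate cut in $\Psi_0$ (top or middle), and the universal properties of those lemmas are what make the extension of $i$ go through. Your two remaining cases are handled essentially as in the paper, but without first disposing of $\Psi_0\neq\Psi_1$ the argument does not establish the theorem.
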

\begin{proof}
The general picture to keep in mind for this proof is the following:
\[
\xymatrix{
&(\Gamma^*,\psi^*) \\
(\Gamma_1,\psi_1) & \\
(\Gamma,\psi) \ar@{-}[u] \ar@{.>}[r]^-{\exists ?}_-{\sim}& (i\Gamma,\psi^{*}|_{i\Gamma}) \ar@{-}[uu] \\
(\Gamma_0,\psi_0) \ar[r]^-{i}_-{\sim} \ar@{-}[u]& (i\Gamma_0,\psi^*|_{i\Gamma_0}) \ar@{-}[u] \\
}
\]
Let $\Psi_1 := \psi_1(\Gamma_1^{\neq})$, $\Psi_0 :=\psi_0(\Gamma_0^{\neq})$ and $\Psi^*:=\psi^*((\Gamma^*)^{\neq})$. Note that $\Psi_0\subseteq\Psi_1$. The first two cases deal with the situation that $\Psi_0\neq\Psi_1$.

{\bf Case 1:} there is $\beta\in\Psi_1\setminus\Psi_0$ such that $\Psi_0<\beta$. Take such $\beta$, and define the family $(\beta_k)_{k\in\Z}$ by $\beta_0 := \beta$, $\beta_n := s^n\beta$ and $\beta_{-n} := p^n\beta$. Note that $s\beta_k = \beta_{k+1}$ for all $k\in\Z$. By Lemma~\ref{Zontop} we may assume that $(\Gamma_0,\psi_0)\subseteq (\Gamma_{\diamond},\psi_{\diamond})\subseteq (\Gamma_1,\psi_1)$ with $\beta\in\Gamma_{\diamond}$. By saturation of $(\Gamma^*,\psi^*)$, there is a family $(\beta^*_k)_{k\in\Z}$ in $\Gamma^*$ such that $i(\Psi_0)<\beta_0^*$ and $s(\beta_k^*) = \beta^*_{k+1}$ for all $k\in\Z$. Thus there is a unique extension of $i$ to an embedding $(\Gamma_{\diamond},\psi_{\diamond})\to(\Gamma^*,\psi^*)$ sending $\beta_k$ to $\beta_k^*$ for all $k\in\Z$.

{\bf Case 2:} $\Psi_1\neq\Psi_0$ and we are not in Case 1. Take $\beta_0\in\Psi_1\setminus\Psi_0$, and define the set $B:=\{\alpha\in\Psi_0:\alpha<\beta_0\}$. Note that $s(B) \subseteq B$. Also define the family $(\beta_k)_{k\in\Z}$ such that $\beta_n = s^n(\beta_0)$ and $\beta_{-n} = p^n(\beta_0)$ for $n>0$. Note that $B<\beta_k<\Gamma_0^{>B}$ and $s(\beta_k) = \beta_{k+1}$ for all $k\in\Z$. Thus by Lemma~\ref{Zinmiddle} we may assume that $(\Gamma_0,\psi_0)\subseteq (\Gamma_{0,B},\psi_{0,B})\subseteq (\Gamma_1,\psi_1)$. Again, by Lemma~\ref{Zinmiddle} and saturation of $(\Gamma^*,\psi^*)$, there is a family $(\beta_k^*)_{k\in\Z}$ in $\Gamma^*$ such that $i(B)<\beta_0^*<i(\Gamma_0^{>B})$ and $s\beta_k^* = \beta_{k+1}^*$, and so there is a unique extension of $i:(\Gamma_0,\psi_0)\to(\Gamma^*,\psi^*)$ to an embedding $(\Gamma_{0,B},\psi_{0,B})\to(\Gamma^*,\psi^*)$ that sends $\beta_k$ to $\beta_k^*$ for all $k\in\Z$.

{\bf Case 3:} $\Psi_0 = \Psi_1$ but $[\Gamma_0]\neq [\Gamma_1]$. Take some $\alpha\in\Gamma_1$ such that $[\alpha]\not\in[\Gamma_0]$. Let $\beta = \psi_1(\alpha)\in\Gamma_0$. Define $C$ to be the cut in $[\Gamma_0]$ which is realized by $[\alpha]$ in $[\Gamma_1]$. By Lemma~\ref{cutinclasses} there is an asymptotic couple $(\Gamma,\psi) = (\Gamma_0+\Q\alpha,\psi_1|_{\Gamma_0+\Q\alpha})$ extending $(\Gamma_0,\psi_0)$ inside $(\Gamma_1,\psi_1)$ and by saturation of $(\Gamma^*,\psi^*)$, the embedding $i$ extends to an embedding $(\Gamma,\psi)\to(\Gamma^*,\psi^*)$.

{\bf Case 4:} $[\Gamma_0] = [\Gamma_1]$ (and thus $\Psi_0 = \Psi_1$ by HC). By Quantifier Elimination for ordered divisible abelian groups, we get an extension $j:\Gamma_1\to\Gamma^*$ of $i$ as an embedding of ordered abelian groups:
\[
\xymatrix{
&&\Gamma^* \\
\Gamma_1\ar[urr]^{j}&& \\
&\Gamma_0 \ar[ul]\ar[uur]^{i}&\\
}
\]
By Lemma~\ref{archclasslemma}, $j$ is actually an embedding of asymptotic couples. Since $\Psi_0 = \Psi_1$ and $i$ is an embedding of $L$-structures, it follows that in fact $j$ also is an embedding of $L$-structures.
\end{proof}

\medskip\noindent
In the next corollary we collect the usual consequences of a quantifier elimination result:

\begin{cor}
$T_{\log}$ and $T_0$ are complete, decidable and model complete.
\end{cor}
\begin{proof}
Model completeness of $T_{\log}$ follows immediately from quantifier elimination, and completeness follows from the observation in Example~\ref{example3} that $(\Gamma_{\log}^{\Q},\psi)$ embeds into every model of $T_{\log}$.

For model completeness of $T_0$, assume that $(\Gamma,\psi)$ is a model of $T_0$. Then it suffices to show that the graphs of the functions $s,p,\delta_1,\delta_2,\ldots$ in $(\Gamma,\psi)$ are existentially definable. For the $\delta_n$'s we actually have quantifier-free definitions:
\[
\delta_n(\alpha) = \beta :\Longleftrightarrow n\beta = \alpha
\]
Furthermore, by the Fixed Point Identity, Lemma~\ref{fixedpointidentity}, we also get a quantifier-free definition for the function $s$:
\[
s(\alpha) = \beta :\Longleftrightarrow (\alpha\neq\infty \wedge \beta = \psi(\alpha - \beta)) \vee (\alpha = \beta = \infty)
\]
It remains to obtain an existential definition for the graph of the function $p$. We can define the graph of $p$ as follows:
\begin{eqnarray*}
p(\alpha) = \beta &:\Longleftrightarrow& (\beta\in\Psi\wedge s(\beta)=\alpha) \vee \\
&& (\beta=\infty \wedge \alpha\in\Gamma_{\infty}\setminus\Psi^{>s0} )\\
\end{eqnarray*}
If $\alpha\in \Gamma_{\infty}\setminus\Psi^{>s0}$, this can happen in one of four (not necessarily mutually exclusive) ways:
\begin{enumerate}
\item $\alpha=\infty$
\item $\alpha\in(\Gamma^{>})'$
\item $\alpha<s^2(0)$
\item there is a $\gamma\in\Gamma$ such that $\gamma^{\dagger}\neq \alpha$ and $s(\alpha) = s(\gamma^{\dagger})$.
\end{enumerate}
Thus we have the following existential definition for the graph of the function $p$:
\begin{eqnarray*}
p(\alpha) = \beta &:\Longleftrightarrow& \exists \gamma \;\text{such that}: \\
&& \quad(\gamma\neq 0 \wedge \beta=\gamma^{\dagger} \wedge s(\beta)=\alpha) \;\text{or} \\
&&\quad  (\beta=\infty\wedge \alpha=\infty)\;\text{or} \\
&&\quad (\beta=\infty\wedge\gamma>0\wedge\alpha = \gamma')\;\text{or} \\
&&\quad (\beta=\infty\wedge \alpha<s^2(0))\;\text{or} \\
&&\quad (\beta=\infty\wedge \gamma\neq0\wedge\gamma^{\dagger}\neq \alpha\wedge s(\alpha)=s(\gamma^{\dagger}))
\end{eqnarray*}

Completeness for $T_0$ follows from model completeness the same way it does for $T_{\log}$.

Decidability for both theories follows from completeness and the fact that these theories are formulated in recursive languages (a finite language in the case of $T_0$) and that they have recursively enumerable axiomatizations.
\end{proof}

\begin{example}
\label{examplesofqe}
Below are some quantifier free definitions of several definable sets in models of $T_{\log}$.
\begin{itemize}
\item The set $\Psi$ can be defined by the formula:
\[
x = p(s(x))
\]
\item The set $(\Psi-\Psi)^{>0}:=\{\alpha_1-\alpha_2:\alpha_1,\alpha_2\in\Psi\text{ and }\alpha_1>\alpha_2\}$ can be defined by the formula:
\[
x=-p(\psi(x)) + p(s(-(x-p(\psi(x))))) \wedge x\neq\infty
\]
It is left as an exercise to the reader to verify that this last formula does indeed define the set $(\Psi-\Psi)^{>0}$. (Hint: Use results from Section~\ref{definablefunctions}).
\end{itemize}
\end{example}

\section{Definable functions on $\Psi$}
\label{definablefunctions}

\noindent
For a (first-order) language $L$ and an $L$-structure $\mathcal{M}$ with underlying set $M$, we say that a set $D\subseteq M^m$ is \textbf{definable} if it is ``definable with parameters''. I.e., there is some $L$-formula $\varphi(x_1,\ldots,x_{m},y_1,\ldots,y_{n})$ and tuple $b = (b_1,\ldots,b_{n})\in M^n$ such that
\[
D = \{a\in M^m: \mathcal{M}\models \varphi(a,b)\}.
\]
Given $A\subseteq M$, a subset of $M^m$ is \textbf{definable over $A$} if the parameter $b$ above can be taken from $A^n$. A function $X\to M^n$ $(X\subseteq M^m)$ is definable (over $A$) if its graph is definable (over $A$).

\medskip\noindent
\emph{In this section, we assume that $(\Gamma,\psi)\models T_{\log}$}.

\begin{definition}
For $k<0$ and $x\in\Psi$, we set $s^k(x) := p^{-k}(x)\in\Psi_{\infty}$. Also, $s^0(x):=x$ for all $x\in\Psi$. A function $F:\Psi\to\Gamma_{\infty}$ is an \textbf{$s$-function} if it is constant, or there are $n\geq 1, k_1<\cdots<k_n$ in $\Z$, $q_1,\ldots,q_n\in\Q^{\neq}$ and $\beta\in\Gamma$ such that $F(x) = \sum_{j=1}^nq_js^{k_j}(x)-\beta$ for all $x\in\Psi$.

For an $s$-function $F(x) = \sum_{j=1}^nq_js^{k_j}(x)-\beta$ as above with $n\geq 1$, define the set $D_F\subseteq\Psi$ to be
\[
D_F = \begin{cases}
[s^{-k_1+1}0,\infty)_{\Psi} & \text{if $k_1<0$} \\
\Psi & \text{if $k_1\geq 0$}
\end{cases}
\]
and the set $I_F\subseteq\Psi$ to be $\Psi\setminus D_F$.

Note that $I_F<D_F$, and  $I_F\cup D_F = \Psi$. Furthermore, $F$ takes the constant value $\infty$ on $I_F$ and takes only values in $\Gamma$ on $D_F$. It is also useful to note that for $x\in D_F$ and $l\in\Z$, if $l\geq k_1$, then $s^l(x)\in\Psi$.

By convention, if we refer to an $s$-function $F(x) = \sum_{j=1}^nq_js^{k_j}(x)-\beta$, it is understood that $n\geq 1$, $k_1<\cdots<k_n$ in $\Z$, $q_1,\ldots,q_n\in\Q^{\neq}$, and $\beta\in\Gamma$.
\end{definition}

\medskip\noindent
In general, the $s$-functions are rather well-behaved. To begin with, we get the following:

\begin{lemma}
\label{sfunctionsmonotone}
Let $F:\Psi\to\Gamma_{\infty}$ be the $s$-function $F(x) = \sum_{j=1}^n q_js^{k_j}(x)-\beta$. If $q_1>0$, then $F$ is strictly increasing on $D_F$, otherwise $F$ is strictly decreasing on $D_F$. In particular, the restriction of $F$ to $D_F$ is injective. Furthermore, if $F$ changes sign on $D_F$, then there is $\alpha\in D_F$ such that $\sign(F(\alpha))\neq \sign(F(s\alpha))$.
\end{lemma}
\begin{proof}
Let $\alpha_0,\alpha_1\in D_F$ be such that $\alpha_0<\alpha_1$. Then
\[
F(\alpha_1)-F(\alpha_0) = q_1(s^{k_1}\alpha_1-s^{k_1}\alpha_0)+q_2(s^{k_2}\alpha_1-s^{k_2}\alpha_0)+\cdots+q_n(s^{k_n}\alpha_1-s^{k_n}\alpha_0).
\]
By Lemma~\ref{succid}, we compute $\psi(s^{k_j}\alpha_1-s^{k_j}\alpha_0) = s^{k_j+1}\alpha_0$ for $j=1,\ldots,n$, and thus
\[
[s^{k_1}\alpha_1-s^{k_1}\alpha_0]>[s^{k_2}\alpha_1-s^{k_2}\alpha_0]>\cdots>[s^{k_n}\alpha_1-s^{k_n}\alpha_0].
\]
Since $s^{k_1}\alpha_1>s^{k_1}\alpha_0$, we get that
\[
\sign(F(\alpha_1)-F(\alpha_0)) = \sign(q_1).
\]
The second statement follows from an appeal to completeness of $T_{\log}$ and the observation that it is obviously true in $(\Gamma_{\log}^{\Q},\psi)$.
\end{proof}

\medskip\noindent
The following theorem is the main result of this section. It says that all definable functions $\Psi\to\Gamma_{\infty}$ are given piecewise by $s$-functions.

\begin{thm}
\label{psitogamma}
Let $F:\Psi\to\Gamma_{\infty}$ be a definable function. Then there is an increasing sequence $s0=\alpha_0<\alpha_1<\cdots<\alpha_{n-1}<\alpha_n=\infty$ in $\Psi_{\infty}$ such that for $k=0,\ldots,n-1$, the restriction of $F$ to $[\alpha_k,\alpha_{k+1})_{\Psi}$ is given by an $s$-function.
\end{thm}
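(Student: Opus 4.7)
The plan is to invoke Quantifier Elimination (Theorem~\ref{qe}) and then carry out a structural analysis of $L$-terms evaluated at $x\in\Psi$. By Theorem~\ref{qe}, the graph of $F$ is defined by a quantifier-free $L$-formula $\varphi(x,y,\bar{c})$ with parameters $\bar{c}\in\Gamma$, which I may place in disjunctive normal form.

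The key preliminary result is a \emph{normal form lemma for terms}: for every $L$-term $t(x)$ in one variable with parameters from $\Gamma$, there is a finite increasing sequence $s0=\alpha_0<\alpha_1<\cdots<\alpha_n=\infty$ in $\Psi_\infty$ such that the restriction of $x\mapsto t(x)$ to each $[\alpha_k,\alpha_{k+1})_\Psi$ is either an $s$-function or is identically $\infty$. This will be proved by induction on the complexity of $t$. The base cases $t(x)=x$ and $t(x)=c$ are $s$-functions directly, and $+$, $-$, $\delta_n$ preserve the class of $s$-functions by $\Q$-linearity. For $s$ and $p$ applied to an $s$-function, a bounded initial segment of $\Psi$ must be split off (where $p$ would hit $\infty$), and elsewhere $s$ and $p$ shift indices in the $s^{k_j}(x)$ summands, while the non-$\Psi$ constant-term correction is handled via the Integral Identity (Lemma~\ref{integralidentity}) and the Fixed Point Identity (Lemma~\ref{fixedpointidentity}). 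The essential case is $\psi$ applied to an $s$-function $\tau(x)=\sum_j q_j s^{k_j}(x)-\beta$: I determine the archimedean class $[\tau(x)]$ by iteratively testing which tail sums of the $q_j$ vanish, then read off $\psi(\tau(x))$ via HC and the Successor Identity (Lemma~\ref{succid}), splitting $\Psi$ at the finitely many thresholds arising from comparing the resulting archimedean class of $\tau(x)$ with $[\beta]$.

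A parallel induction yields a two-variable analogue, whereby each atomic subformula of $\varphi(x,y)$ is equivalent, on a piece of a common refinement of the relevant partitions of $\Psi$, to an equality or inequality relating $y$ (or an $s$-function of $y$) and a fixed $s$-function of $x$. Since $\varphi$ defines the graph of the function $F$, for each $x$ in each interval of the refined partition there is exactly one $y$ satisfying $\varphi(x,y)$; the uniqueness forces some disjunct of $\varphi$ to pin down $y$ via an atomic equality $y=\tau(x)$ for an $s$-function $\tau$ (or $y=\infty$). Further refining the partition to separate the disjuncts of the disjunctive normal form then gives the claimed piecewise representation.

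The principal obstacle is the $\psi$-case of the normal form lemma, where cancellations in $\sum_j q_j s^{k_j}(x)$ must be tracked through a finite recursion on the coefficient sums, and the Successor Identity must be applied at each stage to produce an $s$-function rather than an unsimplified expression. A secondary concern is the two-variable analogue, especially when $y$ appears inside $\psi$, $s$, or $p$; this is handled by treating $y$ symmetrically with $x$ in the inductive step and using that on the graph $y$ is already constrained.
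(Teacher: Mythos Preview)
Your overall plan---reduce via QE to a term-by-term analysis on $\Psi$---matches the paper, but there are two issues, one a genuine error and one an unnecessary detour.

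\textbf{The error: your treatment of $s$ and $p$.} You write that $s$ and $p$ applied to an $s$-function ``shift indices in the $s^{k_j}(x)$ summands,'' with some correction for the constant $\beta$. This is wrong: $s$ is not even approximately linear. For an $s$-function $F(x)=\sum_j q_j s^{k_j}(x)-\beta$ with $n\geq 2$ terms, the output $s(F(x))$ is \emph{never} a $\Q$-linear combination of several shifted $s^{k_j+1}(x)$'s; instead (see Corollary~\ref{ssfunctions}) it is piecewise either the single term $s^{k_1+1}(x)$ or a constant in $\Psi$, with the pieces determined by comparing $s^{k_1+1}(x)$ to a threshold built from $\beta$ and $q:=\sum_j q_j$. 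Likewise, $p(F(x))$ is $\infty$ for all but at most two values of $x$ unless $F(x)=s^k(x)$ already (Lemma~\ref{sfunctionstopsi}), since an $s$-function with $n\geq 2$ or $q_1\neq 1$ or $\beta\neq 0$ almost never lands in $\Psi$. So the $s$-case is not handled by index-shifting plus the Integral and Fixed Point Identities; it requires the same archimedean-class analysis you outline for $\psi$ (and indeed the paper derives the $s$-case from the $\psi$-case via Corollary~\ref{spsigap}, passing to an elementary extension with a new $\gamma\in\Psi'$ above $\Psi$ and writing $s(\alpha)=\psi'(\alpha-\gamma)$). Your $\psi$-case sketch is broadly right in spirit, but ``iteratively testing which tail sums of the $q_j$ vanish'' is not quite the correct criterion either: what matters (Lemma~\ref{gencoeffsumlemma}) is whether the \emph{full} sum $\sum_j q_j$ is zero, in which case $\psi$ of the pure part is $s^{k_1+1}(x)$, versus nonzero, giving the constant $s0$; the interaction with $\beta$ then introduces finitely many breakpoints.

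\textbf{The detour: two-variable terms.} Because $T_{\log}$ has a universal axiomatization (Lemma~\ref{universalaxiomatizationforT}) in addition to QE, every definable function $\Psi\to\Gamma_\infty$ is already piecewise given by one-variable $L$-terms with parameters (substructures are models, so the substructure generated by a point is a model and term-values are determined). The paper invokes exactly this, and never analyzes a two-variable formula $\varphi(x,y)$. Your DNF-and-atomic-equality extraction would work but is considerably more labor, and your sketch of the ``two-variable analogue'' (especially when $y$ sits inside $\psi$, $s$, or $p$) is where the argument would become delicate; you can simply drop that entire step.
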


\medskip\noindent
We first prove that for an $s$-function $F(x)$, the compositions $\psi(F(x))$, $s(F(x))$ are given piecewise by $s$-functions.

\medskip\noindent
The following lemma is a step in this direction.

\begin{lemma}
\label{gencoeffsumlemma}
Let $n\geq 1$, $\alpha_1<\cdots<\alpha_n\in\Psi$, and let $\alpha = \sum_{j=1}^nq_j\alpha_j$ for $q_1,\ldots,q_n\in\Q^{\neq}$. Then
\begin{enumerate}[(1)]
\item $\sum_{j=1}^nq_j=0 \Longrightarrow\psi(\alpha) = s(\alpha_1)$,
\item $\sum_{j=1}^nq_j\neq 0\Longrightarrow\psi(\alpha) = s0$.
\end{enumerate}
\end{lemma}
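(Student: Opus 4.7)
The plan is to split on whether $\sum_j q_j$ vanishes and, in each case, to exhibit a single summand of strictly smallest $\psi$-value, then apply Fact~\ref{valuationfact}. Two preliminary observations drive the computation. First, in any model of $T_{\log}$, the map $s:\Psi\to\Psi^{>s0}$ is a strictly increasing bijection, so $i<j$ implies $s\alpha_i<s\alpha_j$, and by the Successor Identity (Lemma~\ref{succid}) we get $\psi(\alpha_j-\alpha_i)=s\alpha_i$. Second, for \emph{every} $\alpha\in\Psi$ we have $\psi(\alpha)=s0$: indeed $\psi(\alpha)\in\Psi$, and by HC applied to $s0\leq\alpha$ we obtain $\psi(\alpha)\leq\psi(s0)=s0=\min\Psi$, forcing equality (the identity $\psi(s0)=s0$ is Lemma~\ref{s0lemma}).

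For case (2), when $\sum q_j\neq 0$, I rewrite
\[
\alpha\ =\ \Bigl(\sum_{j=1}^{n}q_j\Bigr)\alpha_1\ +\ \sum_{j=2}^{n}q_j(\alpha_j-\alpha_1).
\]
The first summand is a nonzero rational multiple of $\alpha_1$, hence has $\psi$-value $\psi(\alpha_1)=s0$ by the preliminary. Each term in the second summand has $\psi$-value $s\alpha_1$ by Successor Identity, so the sum has $\psi$-value $\geq s\alpha_1>s0$. Fact~\ref{valuationfact} then yields $\psi(\alpha)=s0$.

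For case (1), when $\sum q_j=0$, the $\alpha_1$-term drops out, and using $q_1=-\sum_{j\geq 2}q_j$ I telescope:
\[
\alpha\ =\ \sum_{j=2}^{n}q_j(\alpha_j-\alpha_1)\ =\ -q_1(\alpha_2-\alpha_1)\ +\ \sum_{j=3}^{n}q_j(\alpha_j-\alpha_2).
\]
Since $q_1\neq 0$, the first term has $\psi$-value $\psi(\alpha_2-\alpha_1)=s\alpha_1$ by Successor Identity, while each summand of the tail has $\psi$-value $s\alpha_2>s\alpha_1$ (if $n=2$ the tail is empty and we are already done). Fact~\ref{valuationfact} once more gives $\psi(\alpha)=s\alpha_1$.

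The only mildly subtle point is recognizing that one should telescope against $\alpha_1$ (to expose an $\alpha_1$-term in case (2)) and then telescope the remainder against $\alpha_2$ (to expose an $(\alpha_2-\alpha_1)$-term in case (1)); after that, all the work is done by the Successor Identity, the calibration $\psi(\Psi)=\{s0\}$, and Fact~\ref{valuationfact}. No induction is needed, so I expect no real obstacle.
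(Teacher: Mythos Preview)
Your proof is correct and takes a genuinely different route from the paper. The paper argues by induction on $n$, with separate base cases $n=1$ and $n=2$, and an inductive step that normalizes to $q_1=-1$ and then splits further according to whether $q_2=1$; each subcase isolates a summand of strictly smaller $\psi$-value and applies Fact~\ref{valuationfact}. Your argument accomplishes the same isolation in one shot by the two telescoping identities
\[
\alpha=\Bigl(\textstyle\sum_j q_j\Bigr)\alpha_1+\sum_{j\ge 2}q_j(\alpha_j-\alpha_1)
\quad\text{and}\quad
\alpha=-q_1(\alpha_2-\alpha_1)+\sum_{j\ge 3}q_j(\alpha_j-\alpha_2),
\]
so no induction or subcase analysis on $q_2$ is needed. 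Both proofs rest on the same ingredients (the Successor Identity, the identification $\psi|_\Psi\equiv s0$, and Fact~\ref{valuationfact}); your decomposition simply packages them more efficiently. The paper's inductive framework has the minor advantage of making the base cases completely explicit, but your direct argument is shorter and arguably clearer about \emph{why} the leading term dominates.
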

\begin{proof} By completeness of $T_{\log}$, the lemma will follow from its validity for the case $(\Gamma,\psi) = (\Gamma_{\log}^{\Q},\psi)$. In $(\Gamma_{\log}^{\Q},\psi)$, we may take integers $0\leq m_1<\cdots<m_n$ such that $\alpha_j = \sum_{i=0}^{m_j}e_i$, for $j=1,\ldots,n$. Then
\[
\alpha = \sum_{j=1}^n q_j\left(\sum_{i=0}^{m_j}e_i\right) = \sum_{i=0}^{m_1}\left(\sum_{j=1}^nq_j\right)e_i + \sum_{i=m_1+1}^{m_2}\left(\sum_{j=2}^nq_j\right)e_i+\cdots+\sum_{i=m_{n-1}+1}^{m_n}q_ne_i,
\]
i.e., as an infinite tuple, $\alpha$ has the form:
\[
\alpha = (\underbrace{\textstyle\sum_{j=1}^nq_j,\ldots,\textstyle\sum_{j=1}^nq_j}_{m_1+1},\underbrace{\textstyle\sum_{j=2}^nq_j,\ldots,\textstyle\sum_{j=2}^nq_j}_{m_2-m_1},\ldots).
\]
From this it is clear that if $\sum_{j=1}^nq_j\neq 0$, then $\psi(\alpha) = e_0 = s0$. Otherwise, if $\sum_{j=1}^nq_j = 0$, then $q_1 = -\sum_{j=2}^nq_j\neq 0$ and so 
\[
\psi(\alpha) = \sum_{i=1}^{m_1+1}e_i = \alpha_1+e_{m_1+1} = s(\alpha_1). \qedhere
\]
\end{proof}

\medskip\noindent
The Fixed Point Identity (Lemma~\ref{fixedpointidentity}) which relates $\psi$ and $s$ immediately gives us an $s$-analogue of Lemma~\ref{gencoeffsumlemma}.

\begin{cor}
\label{sgencoeffsumcor}
Let $n\geq 1$, $\alpha_1<\cdots<\alpha_n\in\Psi$, and let $\alpha = \sum_{j=1}^nq_j\alpha_j$ for $q_1,\ldots,q_n\in\Q^{\neq}$. Then
\begin{enumerate}[(1)]
\item $\sum_{j=1}^nq_j\neq 1 \Longrightarrow s(\alpha) = s0$,
\item $\sum_{j=1}^nq_j=1 \Longrightarrow s(\alpha) = s(\alpha_1)$.
\end{enumerate}
\end{cor}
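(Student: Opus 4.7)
The plan is to invoke the Fixed Point Identity (Lemma~\ref{fixedpointidentity}): to show $s(\alpha) = \beta$ it suffices to verify $\psi(\alpha - \beta) = \beta$. So for part~(1) I would aim to show $\psi(\alpha - s0) = s0$, and for part~(2) I would aim to show $\psi(\alpha - s(\alpha_1)) = s(\alpha_1)$. In both cases the strategy is to rewrite $\alpha - \beta$ as a rational linear combination of finitely many \emph{distinct} elements of $\Psi$ and then feed this combination into Lemma~\ref{gencoeffsumlemma}.

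For part~(1), since the axioms of $T_0$ make $s0$ the minimum of $\Psi$, we have $s0 \leq \alpha_1$, so $\alpha - s0$ is a $\Q$-linear combination of distinct elements of $\Psi$ with coefficients summing to $\sum_j q_j - 1$. By hypothesis this sum is nonzero, and Lemma~\ref{gencoeffsumlemma}(2) immediately yields $\psi(\alpha - s0) = s0$. For part~(2), since $\alpha_1 < s(\alpha_1) \leq \alpha_2 < \cdots < \alpha_n$, the expression $\alpha - s(\alpha_1)$ is again a $\Q$-linear combination of distinct elements of $\Psi$ whose smallest term is $\alpha_1$, and now the coefficients sum to $\sum_j q_j - 1 = 0$; Lemma~\ref{gencoeffsumlemma}(1) then gives $\psi(\alpha - s(\alpha_1)) = s(\alpha_1)$, as desired.

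The only bookkeeping needed is for the boundary cases where $s0 = \alpha_1$ in part~(1), or $s(\alpha_1) = \alpha_2$ in part~(2); in each such case two terms merge and the length of the linear combination drops. One simply checks that either the merged coefficient is nonzero (and one has $n$ distinct $\Psi$-terms) or the merged coefficient vanishes (and one has $n-1$ distinct $\Psi$-terms), and that in the latter situation the shortened sum is still nonempty. For instance, in part~(1) with $s0 = \alpha_1$ and $q_1 = 1$ one has $n \geq 2$, since otherwise $\alpha = s0$ and $\sum q_j = 1$, contradicting the hypothesis. In every subcase the surviving coefficients still sum to $\sum_j q_j - 1$, which has the same sign behavior as in the generic case, so the appropriate clause of Lemma~\ref{gencoeffsumlemma} applies verbatim. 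I do not expect any real obstacle beyond this case analysis: the Fixed Point Identity reduces the statement to a mechanical application of Lemma~\ref{gencoeffsumlemma}, which is exactly the $\psi$-analogue the hint promises.
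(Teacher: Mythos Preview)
Your proposal is correct and follows exactly the paper's approach: use the Fixed Point Identity to reduce $s(\alpha)=\beta$ to $\psi(\alpha-\beta)=\beta$, and then apply Lemma~\ref{gencoeffsumlemma} to the resulting $\Q$-linear combination of $\Psi$-elements. Your treatment of the boundary cases ($s0=\alpha_1$ in part~(1), $s(\alpha_1)=\alpha_2$ in part~(2)) is in fact more explicit than the paper's own proof, which glosses over the merging of terms.
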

\begin{proof}
Suppose that $\sum_{j=1}^nq_j\neq 1$. Then $\sum_{j=1}^nq_j-1\neq 0$, so by Lemma~\ref{gencoeffsumlemma}, $\psi(\alpha-s0) = s0$. Thus $s\alpha = s0$ by Lemma~\ref{fixedpointidentity}.

Next, suppose that $\sum_{j=1}^nq_j=1$. Then $\sum_{j=1}^nq_j-1 = 0$, so by Lemma~\ref{gencoeffsumlemma}, $\psi(\alpha-s\alpha_1) = s\alpha_1$. Thus $s\alpha = s\alpha_1$ by Lemma~\ref{fixedpointidentity}.
\end{proof}

\medskip\noindent
In Theorem~\ref{psisfunctions} below we give an explicit description of how compositions $\psi(F(x))$ behave in all possible cases.
%

\begin{thm}
\label{psisfunctions}
Let $F:\Psi\to\Gamma_{\infty}$ be the $s$-function $F(x) = \sum_{j=1}^nq_js^{k_j}(x)-\beta$. Define the function $G:\Psi\to\Gamma_{\infty}$ by $G(x) = \psi(F(x))$. If $x\in I_F$, then $G(x) = \infty$. Otherwise, if $x\in D_F$, then the values $G(x)$ are given in the following table (with $q := \sum_{j=1}^nq_j$):
\begin{center}
\begin{tabular}{|c|l|}
\hline
$\beta$ & $G(x) = \psi(\sum_{j=1}^nq_js^{k_j}(x)-\beta)\quad\text{(assuming $x\in D_F$)}$ \\
\hline
if $\psi(\beta)>s0$ & $G(x) = 
\begin{cases}
s0 & \text{if $q\neq 0$} \\
s^{k_1+1}(x) & \text{if $q=0$ and $s^{k_1+1}(x)<\psi(\beta)$ } \\
G(s^{-k_1-1}(\psi(\beta))) & \text{if $q=0$ and $s^{k_1+1}(x) = \psi(\beta)$} \\
\psi(\beta) & \text{if $q=0$ and $s^{k_1+1}(x) > \psi(\beta)$}
\end{cases}$\\
\hline
if $\psi(\beta)=s0$ & $G(x) = 
\begin{cases}
G(s^{-k_1}s0) & \text{if $s^{k_1}(x) = s0$ } \\
s0 & \text{if $s^{k_1}(x)>s0$ and $q=0$ } \\
s^{k_1+1}(x) & \text{if $s^{k_1}(x)>s0$, $q\neq 0$, and $s^{k_1+1}(x)<s(q^{-1}\beta)$} \\
G(s^{-k_1-1}s(q^{-1}\beta)) & \text{if $s^{k_1}(x)>s0$, $q\neq 0$, and $s^{k_1+1}(x)=s(q^{-1}\beta)$} \\
s(q^{-1}\beta) & \text{if $s^{k_1}(x)>s0$, $q\neq 0$, and $s^{k_1+1}(x)>s(q^{-1}\beta)$} \\
\end{cases}$\\
\hline
\end{tabular}
\end{center}
\end{thm}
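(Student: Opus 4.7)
The plan is to systematically reduce every case in the table to an application of one of four tools: Fact~\ref{valuationfact} (comparison of $\psi$-values), Lemma~\ref{succid} (Successor Identity), Lemma~\ref{gencoeffsumlemma} (the $\psi$-value of a $\Q$-combination of distinct elements of $\Psi$), and Corollary~\ref{sgencoeffsumcor} (its $s$-analogue). The behaviour on $I_F$ is immediate: on $I_F$ some $s^{k_j}(x)=\infty$ by definition of $I_F$, hence $F(x)=\infty$ and $G(x)=\psi(\infty)=\infty$. So from now on I fix $x\in D_F$ and set $\alpha:=\sum_{j=1}^n q_j s^{k_j}(x)$, which is a $\Q$-linear combination of the distinct positive elements $s^{k_1}(x)<s^{k_2}(x)<\cdots<s^{k_n}(x)$ of $\Psi$.

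Row 1 ($\beta=0$). Lemma~\ref{gencoeffsumlemma} applied to $\alpha$ gives precisely the two subcases of the table, using that $s(s^{k_1}(x))=s^{k_1+1}(x)$ and that $\alpha_1=s^{k_1}(x)\in\Psi$ is the smallest summand.

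Row 2 ($\psi(\beta)>s0$). By Row 1, $\psi(\alpha)=s0$ when $q\neq 0$ and $\psi(\alpha)=s^{k_1+1}(x)$ when $q=0$. If $q\neq 0$ then $\psi(\alpha)=s0<\psi(\beta)$, so Fact~\ref{valuationfact} gives $\psi(\alpha-\beta)=s0$. If $q=0$ then I compare $s^{k_1+1}(x)$ with $\psi(\beta)$: when $s^{k_1+1}(x)<\psi(\beta)$ Fact~\ref{valuationfact} gives $\psi(\alpha-\beta)=s^{k_1+1}(x)$; when $s^{k_1+1}(x)>\psi(\beta)$ Fact~\ref{valuationfact} gives $\psi(\alpha-\beta)=\psi(\beta)$; and when $s^{k_1+1}(x)=\psi(\beta)$ the equation $s^{k_1+1}(x)=\psi(\beta)$ pins $x$ down to the unique value $s^{-k_1-1}(\psi(\beta))$, so the value is the tautology recorded in the table.

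Row 3 ($\psi(\beta)=s0$). This is where I expect the main technical work, because now $\psi(\alpha)$ and $\psi(\beta)$ can collide. Using AC2 to pull out $q^{-1}$ when $q\neq 0$, write $F(x)=q\cdot\bigl(\sum(q_j/q)s^{k_j}(x)-q^{-1}\beta\bigr)$, so $G(x)=\psi\bigl(\gamma-q^{-1}\beta\bigr)$ where $\gamma:=\sum(q_j/q)s^{k_j}(x)$. The rational coefficients of $\gamma$ now sum to $1$, so Corollary~\ref{sgencoeffsumcor} (applicable because $s^{k_1}(x)>0$) gives $s(\gamma)=s(s^{k_1}(x))=s^{k_1+1}(x)$. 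Comparing $s(\gamma)$ to $s(q^{-1}\beta)$, the Successor Identity (Lemma~\ref{succid}, together with AC2 to flip signs) handles the strict inequalities, yielding either $s^{k_1+1}(x)$ or $s(q^{-1}\beta)$ as in the table; the equality $s^{k_1+1}(x)=s(q^{-1}\beta)$ again singles out a unique $x$, and the case $s^{k_1}(x)=s0$ similarly fixes $x$ uniquely. If $q=0$ then $\psi(\alpha)=s^{k_1+1}(x)>s0=\psi(\beta)$ (since any $s^{k_1+1}(x)\in\Psi$ satisfies $s^{k_1+1}(x)\geq s(s0)>s0$ on $D_F$), so Fact~\ref{valuationfact} gives $\psi(\alpha-\beta)=\psi(\beta)$, which matches the $s0$ entry via $\psi(\beta)=s0$.

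The one piece of real bookkeeping is verifying in each boundary subcase that the declared "obvious" value really is the tautological restatement of $F$ at the unique $x$ pinned down by the equality; this is straightforward but needs the monotonicity of $s$ on $\Psi$ (from the $T_0$-axioms) to see that the equations $s^{k_1+1}(x)=\psi(\beta)$ and $s^{k_1+1}(x)=s(q^{-1}\beta)$ each have at most one solution in $D_F$. Once that is in place, every row and subrow of the table is verified by one of the four named tools, exactly as the parenthetical citations in the statement already indicate.
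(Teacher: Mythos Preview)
Your proposal is correct and takes essentially the same approach as the paper: the paper's ``proof'' consists entirely of the parenthetical citations to Fact~\ref{valuationfact}, Lemma~\ref{succid}, Lemma~\ref{gencoeffsumlemma}, and Corollary~\ref{sgencoeffsumcor} embedded in the table itself, and you have faithfully expanded each of those citations into the intended one-line argument. In particular, your handling of Row~3 with $q\neq 0$ (scale by $q^{-1}$ so the coefficients sum to $1$, apply Corollary~\ref{sgencoeffsumcor} to get $s(\gamma)=s^{k_1+1}(x)$, then invoke the Successor Identity to compare with $s(q^{-1}\beta)$) is exactly what the paper's hint ``divide by $q$, then use~\ref{succid}+\ref{sgencoeffsumcor}'' is asking for, and your observation that the ``obvious'' entries are tautologies at the unique $x$ singled out by the equality is precisely the intended reading.
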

\begin{proof}
In the third, fifth and eighth cases in the table the computation is immediate since we are able to solve for $x$ in terms of $\beta$. For example, in the third case the assumption $s^{k_1+1}(x) = \psi(\beta)$ implies that $x = s^{-k_1-1}(\psi(\beta))$ and so the function takes the value $G(s^{-k_1-1}(\psi(\beta)))$.

Otherwise, the idea is to do a computation of the form $\psi(\alpha-\beta)$ where $\alpha = \sum_{j=1}^nq_js^{k_j}(x)$. In the first, second, fourth and fifth cases, we can compute the $\psi$-value of $\alpha$ by Lemma~\ref{gencoeffsumlemma} and the assumptions are such that the $\psi$-values of $\alpha$ and $\beta$ will be different so the $\psi$-value of their difference is immediate from Fact~\ref{valuationfact}.

For the seventh and ninth case, we have to compute
\[
\psi\big(\underbrace{q_1s^{k_1}(x)+\cdots+q_ns^{k_n}(x)}_{\alpha} - \beta\big)
\]
where by assumption $\psi(\beta) = \psi(\alpha) = s0$ since $q\neq 0$. Using (AC2), we can pivot to a situation where we can use Lemma~\ref{succid} and Corollary~\ref{sgencoeffsumcor} to do the computation. I.e., by dividing by $q$ we reduce to computing
\[
\psi\big(\underbrace{q^{-1}(q_1s^{k_1}(x)+\cdots+q_ns^{k_n}(x))}_{q^{-1}\alpha} - q^{-1}\beta\big).
\]
By Corollary~\ref{sgencoeffsumcor}, we know that $s(q^{-1}\alpha) = s^{k_1+1}(x)$. Our assumptions in cases seven and nine say precisely that the $s$-values of $q^{-1}\alpha$ and $q^{-1}\beta$ are different. From that point, it suffices to just use Lemma~\ref{succid}.
\end{proof}

\medskip\noindent
Corollary~\ref{spsigap} allows us to easily transform Theorem~\ref{psisfunctions} into an $s$-analogue. In the proof of Corollary~\ref{ssfunctions} below we perform this transformation.

\begin{cor}
\label{ssfunctions}
Let $F:\Psi\to\Gamma_{\infty}$ be the $s$-function $F(x) = \sum_{j=1}^nq_js^{k_j}(x)-\beta$. Define the function $G:\Psi\to\Gamma_{\infty}$ by $G(x) = s(F(x))$. If $x\in I_F$, then $G(x) = \infty$. Otherwise, if $x\in D_F$, then the values $G(x)$ are given in the following table:\begin{center}
\begin{tabular}{|c|l|}
\hline
$\beta$ & $G(x) = s(\sum_{j=1}^nq_js^{k_j}(x)-\beta)\quad\text{(assuming $x\in D_F$)}$ \\
\hline
if $s(-\beta)>s0$ & $G(x) = 
\begin{cases}
s0 & \text{if $q\neq 0$} \\
s^{k_1+1}(x) & \text{if $q=0$ and $s^{k_1+1}(x)<s(-\beta)$} \\
G(s^{-k_1-1}(s(-\beta))) & \text{if $q=0$ and $s^{k_1+1}(x) =s(-\beta)$} \\
s(-\beta) & \text{if $q=0$ and $s^{k_1+1}(x) > s(-\beta)$}
\end{cases}$\\
\hline
if $s(-\beta)=s0 $ & $G(x) = 
\begin{cases}
G(s^{-k_1}s0) & \text{if $s^{k_1}(x) = s0$} \\
s0 & \text{if $s^{k_1}(x)>s0$ and $q=0$} \\
s^{k_1+1}(x) & \text{if $s^{k_1}(x)>s0$, $q\neq 0$ and $s^{k_1+1}(x)<\gamma_0$} \\
G(s^{-k_1-1}\gamma_0) & \text{if $s^{k_1}(x)>s0$, $q\neq 0$ and $s^{k_1+1}(x)=\gamma_0$} \\
\gamma_0 & \text{if $s^{k_1}(x)>s0$, $q\neq 0$ and $s^{k_1+1}(x)>\gamma_0$} \\
\end{cases}$\\
\hline
\end{tabular}
\end{center}
 where $q := \sum_{j=1}^nq_j$ and for $q\neq 0$,
\[
\gamma_0 := \begin{cases}
s0 & \text{if $\beta=0,q\neq 1$} \\
\infty & \text{if $\beta=0, q=1$} \\
s\left(\frac{1}{1-q}\beta\right)& \text{if $\beta\neq 0, q\neq 1$} \\
\psi(\beta) & \text{if $\beta\neq 0, q=1$}. 
\end{cases}
\]
\end{cor}
\begin{proof}
It is clear that if $x\in I_F$, then $s^{k_1}(x) = \infty$ and so $G(x) = \infty$ as a result. Thus, from now on we will assume that $x\in D_F$ and we think of $D_F$ as a fixed subset of the $\Psi$-set of $\Gamma$. Next we will take an elementary extension $(\Gamma',\psi)$ of $(\Gamma,\psi)$ with an element $\gamma\in \Psi_{\Gamma'}$ such that $\gamma>\Psi$. Now, if we take the table from Theorem~\ref{psisfunctions}, but we replace $\beta$ with $\beta+\gamma$ and have $x$ range over $D_F\subseteq\Gamma$, then we get the following table, computed in $(\Gamma',\psi)$:
\begin{center}
\begin{tabular}{|c|l|}
\hline
$\beta$ & $G(x) = \psi(\sum_{j=1}^nq_js^{k_j}(x)-\beta-\gamma)\quad\text{(assuming $x\in D_F\subseteq\Gamma$)}$ \\
\hline
if $\psi(\beta+\gamma)>s0$ & $G(x) = 
\begin{cases}
s0 & \text{if $q\neq 0$} \\
s^{k_1+1}(x) & \text{if $q=0$ and $s^{k_1+1}(x)<\psi(\beta+\gamma)$ } \\
G(s^{-k_1-1}(\psi(\beta+\gamma))) & \text{if $q=0$ and $s^{k_1+1}(x) = \psi(\beta+\gamma)$} \\
\psi(\beta+\gamma) & \text{if $q=0$ and $s^{k_1+1}(x) > \psi(\beta+\gamma)$}
\end{cases}$\\
\hline
if $\psi(\beta+\gamma)=s0$ & $G(x) = 
\begin{cases}
G(s^{-k_1}s0) & \text{if $s^{k_1}(x) = s0$ } \\
s0 & \text{if $s^{k_1}(x)>s0$ and $q=0$ } \\
s^{k_1+1}(x) & \text{if $s^{k_1}(x)>s0$, $q\neq 0$, and $s^{k_1+1}(x)<s(q^{-1}(\beta+\gamma))$} \\
G(s^{-k_1-1}s(q^{-1}(\beta+\gamma))) & \text{if $s^{k_1}(x)>s0$, $q\neq 0$, and $s^{k_1+1}(x)=s(q^{-1}(\beta+\gamma))$} \\
s(q^{-1}(\beta+\gamma)) & \text{if $s^{k_1}(x)>s0$, $q\neq 0$, and $s^{k_1+1}(x)>s(q^{-1}(\beta+\gamma))$} \\
\end{cases}$\\
\hline
\end{tabular}
\end{center}
Since we are assuming that $x\in D_F\subseteq\Gamma$, we can apply Corollary~\ref{spsigap} to replace $\psi(\sum_{j=1}^nq_js^{k_j}(x)-\beta-\gamma)$ with $s(\sum_{j=1}^nq_js^{k_j}(x)-\beta)$ and also $\psi(\beta+\gamma) = \psi(-\beta-\gamma)$ with $s(-\beta)$. Finally, we set $\gamma_0:= s(q^{-1}(\beta+\gamma))$ when $q\neq 0$. This gives us the desired table:
\begin{center}
\begin{tabular}{|c|l|}
\hline
$\beta$ & $G(x) = s(\sum_{j=1}^nq_js^{k_j}(x)-\beta)\quad\text{(assuming $x\in D_F\subseteq\Gamma$)}$ \\
\hline
if $s(-\beta)>s0$ & $G(x) = 
\begin{cases}
s0 & \text{if $q\neq 0$} \\
s^{k_1+1}(x) & \text{if $q=0$ and $s^{k_1+1}(x)<s(-\beta)$ } \\
G(s^{-k_1-1}(s(-\beta))) & \text{if $q=0$ and $s^{k_1+1}(x) = s(-\beta)$} \\
s(-\beta) & \text{if $q=0$ and $s^{k_1+1}(x) > s(-\beta)$}
\end{cases}$\\
\hline
if $s(-\beta)=s0$ & $G(x) = 
\begin{cases}
G(s^{-k_1}s0) & \text{if $s^{k_1}(x) = s0$ } \\
s0 & \text{if $s^{k_1}(x)>s0$ and $q=0$ } \\
s^{k_1+1}(x) & \text{if $s^{k_1}(x)>s0$, $q\neq 0$, and $s^{k_1+1}(x)<\gamma_0$} \\
G(s^{-k_1-1}\gamma_0) & \text{if $s^{k_1}(x)>s0$, $q\neq 0$, and $s^{k_1+1}(x)=\gamma_0$} \\
\gamma_0 & \text{if $s^{k_1}(x)>s0$, $q\neq 0$, and $s^{k_1+1}(x)>\gamma_0$} \\
\end{cases}$\\
\hline
\end{tabular}
\end{center}
However we are not done yet; currently $\gamma_0$ is still an external parameter. We will show (or arrange) that $\gamma_0\in \Psi_{\infty}$ (and give an explicit formula for it), which will then yield the corollary. First, we assume that $\beta=0$. If $q\neq 1$, then $\gamma_0 = s(q^{-1}\gamma) = s0$ by Corollary~\ref{sgencoeffsumcor}. If $q=1$, then $\gamma_0=s(\gamma)>\Psi$ and so $\gamma_0\not\in \Gamma$. However, in this case, $s^{k_1+1}(x) \star\gamma_0$ iff $s^{k_1+1}(x)\star \infty$ for $\star\in\{<,=,>\}$ because $s^{k_1+1}(x)\in\Psi$ and both $s(\gamma)$ and $\infty$ are $>\Psi$. Thus we redefine $\gamma_0:=\infty$ if $\beta=0$ and $q=1$. Now we assume that $\beta\neq 0$ and we take yet another elementary extension $(\Gamma'',\psi)$ of $(\Gamma',\psi)$ with an element $\tilde{\gamma}\in \Psi_{\Gamma''}$ such that $\tilde{\gamma}>\Psi_{\Gamma'}$. If $q=1$, then we have
\[
\gamma_0 = s(\beta+\gamma) = \psi(\beta+\gamma-\tilde{\gamma}) = \psi(\beta)
\]
by Fact~\ref{valuationfact} and Lemma~\ref{succid} because $\psi(\gamma-\tilde{\gamma}) = s\gamma>\psi(\beta)\in\Psi$. Otherwise, assume that $q\neq 1$ (and thus $q^{-1}-1\neq 0$). Then we can multiply on the inside by $(q^{-1}-1)^{-1}$ to compute
\[
\gamma_0 = s(q^{-1}(\beta+\gamma)) = \psi(q^{-1}(\beta+\gamma)-\tilde{\gamma}) = \psi(q^{-1}\beta+q^{-1}\gamma-\tilde{\gamma}) = \psi\left(\frac{1}{1-q}\beta+ \frac{q}{1-q}(q^{-1}\gamma-\tilde{\gamma})\right).
\]
Next note that $s((1-q)^{-1}\beta)\in\Psi$ whereas $s(\frac{q}{1-q}(q^{-1}\gamma-\tilde{\gamma})) = s\gamma>\Psi$ by Corollary~\ref{sgencoeffsumcor}. Thus $\gamma_0 = s((1-q)^{-1}\beta)$ by Lemma~\ref{succid}.
\end{proof}

\medskip\noindent
Theorem~\ref{psisfunctions} and Corollary~\ref{ssfunctions} are the heart of the proof of Theorem~\ref{psitogamma}. To round things out, we need to make a few more minor observations before proceeding with our proof of Theorem~\ref{psitogamma}.

\begin{lemma}
$\Psi$ is a linearly independent subset of $\Gamma$ as a vector space over $\Q$.
\end{lemma}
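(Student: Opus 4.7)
The plan is to deduce this immediately from Lemma~\ref{gencoeffsumlemma}, which already does all the work. Suppose, toward a contradiction, that there is a nontrivial $\Q$-linear dependence, i.e., distinct elements $\alpha_1<\cdots<\alpha_n$ of $\Psi$ and nonzero rationals $q_1,\dots,q_n$ with $\sum_{j=1}^n q_j\alpha_j=0$.

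Now apply $\psi$ (extended so $\psi(0)=\infty$) to both sides. On the left we get $\psi(0)=\infty$. On the right, Lemma~\ref{gencoeffsumlemma} tells us the value of $\psi\bigl(\sum q_j\alpha_j\bigr)$ is either $s(\alpha_1)$ (if $\sum q_j=0$) or $s0$ (if $\sum q_j\neq 0$); in either case this value lies in $\Psi\subseteq\Gamma$, hence is finite. This contradicts $\psi(0)=\infty$, and so no such relation exists.

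There is essentially no obstacle here: the combinatorics of sums of elements of $\Psi$ was already worked out in Lemma~\ref{gencoeffsumlemma}, whose proof proceeds by induction on $n$ using Fact~\ref{valuationfact} and the Successor Identity (Lemma~\ref{succid}). Linear independence is just the contrapositive observation that the formula produced by that lemma outputs a genuine element of $\Psi$, never the symbol $\infty$.
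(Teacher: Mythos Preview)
Your proof is correct and follows essentially the same approach as the paper: both invoke Lemma~\ref{gencoeffsumlemma} to see that any nontrivial $\Q$-linear combination of elements of $\Psi$ has $\psi$-value equal to $s0$ or $s(\alpha_1)$, hence lies in $\Gamma^{\neq}$. The only difference is cosmetic---you phrase it as a contradiction with $\psi(0)=\infty$, while the paper phrases it directly.
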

\begin{proof}
Let $\alpha = \sum_{j=1}^nq_j\alpha_j$, $\alpha_1<\cdots<\alpha_n\in\Psi$ and $q_1,\ldots,q_n\in\Q^{\neq}$. By Lemma~\ref{gencoeffsumlemma}, either $\psi(\alpha) = s0$, or $\psi(\alpha) = s\alpha_1$, and so $\alpha\neq 0$.
\end{proof}


\medskip\noindent
Lemma~\ref{sfunctionstopsi} below describes the values of an $s$-function in the set $\Psi$.

\begin{lemma}
\label{sfunctionstopsi}
Let an $s$-function $F(x) = \sum_{j=1}^nq_js^{k_j}(x)-\beta$ be given and let $F^*$ be its restriction to $D_F$. Then exactly one of the following is true:
\begin{enumerate}
\item $\image F^*\subseteq\Psi$, $\beta = 0$, $n=1$ and $q_1 = 1$,
\item $|(\image F^*)\cap\Psi| = 2$,
\item $|(\image F^*)\cap\Psi| = 1$,
\item $|(\image F^*)\cap\Psi| = 0$.
\end{enumerate}
\end{lemma}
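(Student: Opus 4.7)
The plan hinges on the preceding lemma's assertion that $\Psi$ is $\Q$-linearly independent in $\Gamma$. First, I observe that the four cases are mutually exclusive: in case (1), $F^*(x) = s^{k_1}(x) \in \Psi$ for every $x \in D_F$, so $|\image F^* \cap \Psi|$ is infinite, while (2)--(4) each specify a finite cardinality. Thus at most one of (1)--(4) can hold. The easy direction of case (1) is immediate: when $n=1$, $q_1=1$, and $\beta=0$, the function $F^*$ is just $x \mapsto s^{k_1}(x)$, taking all its values in $\Psi$.

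For the remaining content, consider any $x \in D_F$ with $F^*(x) = \gamma \in \Psi$. Then
\[
\beta = \sum_{j=1}^n q_j\, s^{k_j}(x) - \gamma
\]
expresses $\beta$ as a $\Q$-linear combination of elements of $\Psi$. So if $\beta \notin \Q\Psi$, there are no solutions and we fall into case (4). Otherwise, by the preceding lemma $\beta$ has a unique canonical representation $\beta = \sum_{l=1}^m r_l \delta_l$ with distinct $\delta_l \in \Psi$ and $r_l \in \Q^{\neq}$. Each solution $(x,\gamma)$ then pairs up the multiset $\{(q_j, s^{k_j}(x))\} \cup \{(-1, \gamma)\}$ (after combining any coincident $\Psi$-elements) with $\{(r_l, \delta_l)\}$; and because $s$ is injective on $\Psi$, fixing $x$ fixes the entire tuple $(s^{k_1}(x), \ldots, s^{k_n}(x))$, so each such matching determines $x$ uniquely. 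A case split by whether $\gamma \in \{s^{k_j}(x) : j\}$, and if so whether the corresponding $q_{j_0} = 1$, bounds the number of candidate matchings.

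The main obstacle is the combinatorial case analysis showing that at most two matchings exist unless we are in case (1). The decisive move will be: assume $x_1, x_2, x_3 \in D_F$ give distinct solutions $\gamma_1,\gamma_2,\gamma_3 \in \Psi$; comparing pairwise the constraints forced by uniqueness of the $\Psi$-decomposition of $\beta$, and using that finite subsets of $\Z$ are not invariant under any nonzero integer shift (so the tuples $(s^{k_j}(x_i))_j$ cannot overlap sufficiently for different $i$), one derives contradictions with the $\Q$-linear independence of $\Psi$ unless the whole configuration collapses to $n=1$, $q_1=1$, $\beta=0$. The two-solution boundary (case (2)) is realized exactly when $n=1$, $q_1=-1$, and $\beta = -\alpha_1 - \alpha_2$ for two distinct $\alpha_1, \alpha_2 \in \Psi \cap s^{k_1}(D_F)$: the symmetry $\beta = -\alpha - \gamma = -\gamma - \alpha$ produces precisely two valid matchings.
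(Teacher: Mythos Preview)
Your overall strategy---exploit the $\Q$-linear independence of $\Psi$ to constrain how the moving terms $s^{k_j}(x)$ can match against a fixed decomposition of $\beta$---is exactly the paper's strategy. But your execution has two problems.

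First, the central combinatorial step is not carried out. You write that from three distinct solutions ``one derives contradictions with the $\Q$-linear independence of $\Psi$,'' invoking that finite subsets of $\Z$ are not invariant under a nonzero shift. This is a plan, not an argument, and the shift-invariance remark does not obviously yield the bound $\le 2$: the tuple $(s^{k_1}(x),\dots,s^{k_n}(x))$ moves rigidly as $x$ varies, but the extra term $-\gamma$ is \emph{not} tied to that rigid motion, so you must control simultaneously how the window aligns with $\{\delta_1,\dots,\delta_m\}$ and where the leftover $\gamma$ lands. You have not done this.

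Second, your claimed characterization of case~(2) is false. You assert that $|(\image F^*)\cap\Psi|=2$ occurs \emph{exactly} when $n=1$, $q_1=-1$, and $\beta=-\alpha_1-\alpha_2$. But take $n=2$, $k_1=0$, $k_2=1$, $q_1=q_2=1$, and $\beta=\alpha\in\Psi$ with $\alpha>s0$, so $F(x)=x+s(x)-\alpha$. Then $F(\alpha)=s(\alpha)\in\Psi$ and $F(p(\alpha))=p(\alpha)\in\Psi$, and one checks (by linear independence) that no other $x$ works. So case~(2) arises with $n=2$ as well. Since this concrete claim is wrong, the unexecuted ``assume three solutions and contradict'' argument cannot be trusted without details.

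The paper's proof avoids all of this by comparing the length $m$ of the canonical $\Psi$-expansion of $\beta$ directly to $n$. If $m\notin\{n-1,n,n+1\}$ there are no solutions; if $m=n$ there is at most one (the window $\{s^{k_j}(x)\}$ must coincide with $\{\alpha_1,\dots,\alpha_m\}$); if $m=n\pm 1$ there are at most two, because the window must align with the $\alpha$'s at one of its two ends. This is both shorter and gives the bound $\le 2$ for free, without any three-solutions argument.
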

\begin{proof}
If $\beta\not\in\SPAN_{\Q}\Psi\subseteq\Gamma$, then $(\image F^*)\cap\Psi = \emptyset$. Thus assume for the rest of the proof that 
\[
\beta = q_1'\alpha_1+\cdots+q_m'\alpha_m
\]
 for $\alpha_1<\cdots<\alpha_m\in\Psi$ and $q_1',\ldots,q_m'\in\Q^{\neq}$. 
 
 The idea is that we are interested in which values of $x$ will put the expression
 \[
 \underbrace{q_1s^{k_1}(x)+\cdots+q_ns^{k_n}(x)}_{\alpha(x)} - \underbrace{(q_1'\alpha_1+\cdots+q_m'\alpha_m)}_{\beta}
 \]
 into the set $\Psi$. By the $\Q$-linear independence of $\Psi$, it is necessary that nearly all of the components of $\alpha(x)$ and $\beta$ will cancel. We will do this by a case distinction.
 
  If $m>n+1$ or $m<n-1$, then for all $x\in D_F$, the value of $F^*(x)$ will be a linear combination of two or more elements of $\Psi$ with nonzero coefficients so $(\image F^*)\cap\Psi = \emptyset$. Thus further assume that $n-1\leq m\leq n+1$. If $m=0$ (so $\beta=0$), then $F^*(x)\in\Psi$ iff $n=1$ and $q_1 = 1$, by the linear independence of $\Psi$. So further assume that $m>0$. Now we look at three subcases:

(Case 1: $m=n-1$, $m>0$) In this case we can expand out $F^*(x)$ as follows:
\[
F^*(x) = q_1s^{k_1}(x)+\cdots+q_ns^{k_n}(x) - q_1'\alpha_1-\cdots-q_{n-1}'\alpha_{n-1}.
\]
In order for $F^*(x)$ above to be an element of $\Psi$, it is necessary that either $s^{k_1}(x) = \alpha_1$ or $s^{k_n}(x) = \alpha_n$, otherwise the value of $F(x)$ will be a linear combination of two or more elements of $\Psi$. Thus $|(\image F^*)\cap\Psi|\leq 2$ in this case.

(Case 2: $m=n+1$, $m>0$) This case is similar to Case 1 and $|(\image F^*)\cap \Psi|\leq 2$.

(Case 3: $m=n$) We can expand $F^*(x)$ as follows:
\[
F^*(x) = q_1s^{k_1}(x)+\cdots+q_ns^{k_n}(x) - q_1'\alpha_1-\cdots-q_{n}'\alpha_{n}.
\]
In order for $F^*(x)\in\Psi$, it is necessary that $s^{k_j}(x) = \alpha_j$ for $j=1,\ldots,n$. Otherwise the value of $F^*(x)$ will be a linear combination of two or more elements of $\Psi$. Thus $|(\image F^*)\cap\Psi|\leq 1$ in this case.
\end{proof}

\begin{lemma}
\label{psitogammaterms}
Let $t(x):\Gamma_{\infty}\to\Gamma_{\infty}$ be an $L$-term and let $F:\Psi\to\Gamma_{\infty}$ be the restriction $t|\Psi$ of $t$ to $\Psi$. Then there is an increasing sequence $s0=\alpha_0<\alpha_1<\cdots<\alpha_{n-1}<\alpha_n=\infty$ in $\Psi_{\infty}$ such that for $k=0,\ldots,n-1$, the restriction of $F$ to $[\alpha_k,\alpha_{k+1})_{\Psi}$ is given by an $s$-function.
\end{lemma}
\begin{proof}
We do this by induction on the complexity of the $L$-terms.

(Easy Cases) By definition the constant term $\beta$ for $\beta\in\Gamma_{\infty}$ is an $s$-function, and its clear that the set of $s$-functions is closed under $+,-$ and $\delta_n$ for $n\geq 1$.

($\psi$ Case) Let $F(x) = \sum_{j=1}^nq_js^{k_j}(x)-\beta$ be an $s$-function. Then we can determine the value of $\psi(F(x))$ from Theorem~\ref{psisfunctions}. Note that whenever the expression $s^l(x)<\delta$ is not vacuous in the table of Theorem~\ref{psisfunctions}, then it is equivalent to $x<s^{-l}\delta$ (similarly for $=$ and $>$).

($s$ Case) This is similar to the $\psi$ case, except we use Corollary~\ref{ssfunctions}.

($p$ Case) Let $F(x) = \sum_{j=1}^nq_js^{k_j}(x)-\beta$ be an $s$-function. By Lemma~\ref{sfunctionstopsi}, if $\beta=0, n=1,q_1=1$, then $F^*$ is of the form $s^k(x)$ and so
\[
p(F(x)) = \begin{cases}
\infty & \text{if $x\in I_F$} \\
\infty & \text{if $x = \min D_F$ and $k\leq0$} \\
s^{k-1}(x) & \text{if $x>\min D_F$ or $k>0$}
\end{cases}
\] 
Otherwise, $F(x)\in\Psi$ for $0,1$ or $2$ values of $x$, so $p(F(x)) = \infty$ for all $x\in\Psi$ with at most $0$, $1$ or $2$ exceptions.
\end{proof}

\medskip\noindent
We say that a set $I\subseteq\Psi$ is an \textbf{interval in $\Psi$} if there are $\alpha,\beta\in\Psi_{\infty}$ with $\alpha<\beta$ such that $I = [\alpha,\beta)_{\Psi}$. The following is immediate from Theorem~\ref{qe}, and Lemmas~\ref{sfunctionsmonotone} and~\ref{psitogammaterms}:

\begin{cor}
\label{ominimal}
Every definable $A\subseteq\Psi$ is a finite union of intervals in $\Psi$ and singletons.
\end{cor}

\begin{proof}[Proof of Theorem~\ref{psitogamma}]
It follows from quantifier elimination and the fact that $T_{\log}$ has a universal axiomatization that there are $L$-terms $t_1(x),\ldots,t_{n}(x)$ such that on $\Psi$ we have $F(x) = t_k(x)$, for some $i\in \{1,\ldots,k\}$. By Corollary~\ref{ominimal}, the set
\[
D_i:= \{x\in\Psi: F(x) = t_k(x)\}\subseteq \Psi
\]
will be a finite union of intervals and singletons. Furthermore, by Lemma~\ref{psitogammaterms}, the restriction of $F(x)$ to $D_i$ will be given piecewise by $s$-functions in the desired way.
\end{proof}

\begin{cor}[Characterization of definable functions $\Psi\to\Psi$]
\label{psitopsi}
Let $F:\Psi\to\Psi$ be definable in $(\Gamma,\psi)$. Then there is an increasing sequence $s0=\alpha_0<\alpha_1<\cdots<\alpha_{n-1}<\alpha_n=\infty$ in $\Psi_{\infty}$ such that for $k=1,\ldots,n$, the restriction of $F$ to $[\alpha_{k-1},\alpha_{k})_{\Psi}$ is either constant or of the form $x\mapsto s^l(x)$ for some $l\in\Z$.
\end{cor}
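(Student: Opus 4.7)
The plan is to derive this corollary directly from Theorem~\ref{psitogamma} combined with Lemma~\ref{sfunctionstopsi}. Since $\Psi\subseteq\Gamma_{\infty}$, any definable function $F:\Psi\to\Psi$ is also a definable function $\Psi\to\Gamma_{\infty}$, so Theorem~\ref{psitogamma} yields an initial partition $s0=\alpha_0<\alpha_1<\cdots<\alpha_n=\infty$ of $\Psi_{\infty}$ such that on each half-open interval $[\alpha_{k-1},\alpha_k)_{\Psi}$ the function $F$ is given by some $s$-function $F_k(x)=\sum_{j=1}^{m_k}q_{k,j}s^{l_{k,j}}(x)-\beta_k$ (or by a constant, which is already of the desired form).

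The key observation is that on each piece $[\alpha_{k-1},\alpha_k)_{\Psi}$, the image of $F_k$ is contained in $\Psi$. Applying Lemma~\ref{sfunctionstopsi} to the restriction of $F_k$ to its set of ``good inputs'' gives a sharp trichotomy: either $F_k$ has the explicit form $s^{l}(x)$ for some $l\in\Z$ (case (1) of the lemma, with $\beta_k=0$, $m_k=1$, and leading coefficient $1$), in which case the piece already satisfies the conclusion of the corollary, or the image $F_k(D_{F_k})$ meets $\Psi$ in at most two points (cases (2)-(4)). In the latter case, since $F(x)\in\Psi$ for every $x\in[\alpha_{k-1},\alpha_k)_{\Psi}$, the whole piece consists of at most two points of $\Psi$.

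To finish, I refine the partition on each such ``small'' piece by inserting the appropriate elements of $\Psi_{\infty}$, breaking the piece into at most two singletons. On each singleton $F$ is trivially constant. Collecting all refinements yields a finer partition $s0=\alpha_0'<\alpha_1'<\cdots<\alpha_{N}'=\infty$ of $\Psi_{\infty}$ on whose half-open intervals $F$ is either constant or of the form $x\mapsto s^l(x)$ for some $l\in\Z$, which is what the corollary requires.

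The only step requiring any delicacy is the passage from ``$s$-function whose image on the piece lies in $\Psi$'' to ``either $F_k$ is globally of the form $s^l$, or the piece is finite''; but this is precisely what Lemma~\ref{sfunctionstopsi} is designed to give, so no further calculation is needed.
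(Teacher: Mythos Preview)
Your proposal is correct and follows essentially the same approach as the paper, which simply states that the corollary follows from Theorem~\ref{psitogamma} and Lemma~\ref{sfunctionstopsi}. The one small point you leave implicit is that passing from ``$|F_k(D_{F_k})\cap\Psi|\le 2$'' to ``the piece $[\alpha_{k-1},\alpha_k)_\Psi$ has at most two points'' uses the injectivity of $F_k$ on $D_{F_k}$, which the paper records as a corollary of the linear independence of $\Psi$; with that remark added, your argument is complete.
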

\begin{proof}
This follows from Theorem~\ref{psitogamma} and  Lemma~\ref{sfunctionstopsi}.
\end{proof}

\section{Definable subsets of $\Psi$}
\label{stableembeddedness}

\noindent
\emph{In this section we assume that $(\Gamma,\psi)$ is a model of $T_{\log}$}. 

\medskip\noindent
It is clear from Corollary~\ref{ominimal} that each nonempty definable $A\subseteq\Psi$ has a least element. This gives us definable Skolem functions for definable subsets of $\Psi^n$ (see, for example,~\cite[p. 94]{tametopology}).

\medskip\noindent
The following Theorem~\ref{psintopsi} follows immediately from Corollary~\ref{ominimal} and the main result of~\cite{discreteominimal}. For the reader's convenience we supply a more direct and self-contained proof. It is a variant of~\cite[Lemma 4.7]{densepairs}, which itself is a variant of~\cite[Lemma 1]{stronglyminimal}.

\begin{thm}
\label{psintopsi}
Let $n\geq 1$ and suppose that $f:\Psi^{n}\to\Psi$ is a definable function. Then $f$ is definable in the structure $(\Psi;<)$.
\end{thm}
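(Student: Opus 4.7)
The plan is to proceed by induction on $n$. The base case $n = 1$ is essentially Corollary \ref{psitopsi}: that corollary writes a definable $f : \Psi \to \Psi$ piecewise as a constant $c_k \in \Psi$ or as $x \mapsto s^{l_k}(x)$ for some $l_k \in \Z$ on intervals $[\alpha_{k-1}, \alpha_k)_\Psi$ with endpoints in $\Psi_\infty$. The axioms of $T_0$ make $\Psi$ a discrete successor set with $\gamma \mapsto s\gamma$ a bijection $\Psi \to \Psi^{>s0}$, so $s$ and $p$, and therefore $s^l$ for every fixed $l \in \Z$, are first-order definable in $(\Psi; <)$ from $<$ alone; the intervals and constant values $c_k$ are definable over parameters from $\Psi$, and the case $n = 1$ follows.

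For the inductive step, fix definable $f : \Psi^n \to \Psi$ and, for each $\bar{a} \in \Psi^{n-1}$, let $f_{\bar{a}}(x) := f(\bar{a}, x)$. The heart of the argument is a uniform-in-$\bar{a}$ version of Corollary \ref{psitopsi}: after passing to a finite definable partition of $\Psi^{n-1}$ on each part of which the type of piece is constant, there should exist $m \in \N$ and fixed integers $l_1, \ldots, l_m \in \Z$, together with definable functions $\alpha_0, \ldots, \alpha_m : \Psi^{n-1} \to \Psi_\infty$ and $c_1, \ldots, c_m : \Psi^{n-1} \to \Psi_\infty$, such that on $[\alpha_{k-1}(\bar{a}), \alpha_k(\bar{a}))_\Psi$ the function $f_{\bar{a}}$ equals either the constant $c_k(\bar{a})$ or $x \mapsto s^{l_k}(x)$, according to the type attached to the $k$th piece on the chosen part. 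Granted this, the inductive hypothesis applies to each $\alpha_k$ and $c_k$ on the (definable) preimage of $\Psi$, yielding their $(\Psi; <)$-definability, and combined with the $(\Psi; <)$-definability of $s^{l_k}$, this makes $f$ itself $(\Psi; <)$-definable.

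To extract this uniform description I would use quantifier elimination together with the explicit computations in Theorem \ref{psisfunctions} and Corollary \ref{ssfunctions}. By QE the graph of $f$ on $\Psi^{n+1}$ is cut out by a quantifier-free $L$-formula whose atomic subformulas compare $L$-terms. An induction on term complexity, using Theorem \ref{psisfunctions} and Corollary \ref{ssfunctions} to push $\psi$ and $s$ past sums of $s$-shifts of the variables, rewrites each such term, piecewise on $\Psi^n$, as an $s$-function in the last coordinate with parameter $\beta \in \Gamma$ that depends definably on $\bar{a}$. The integer shifts appearing in these $s$-functions are determined by the syntactic shape of the defining term, not by $\bar{a}$, which is the source of the absoluteness of the $l_k$.

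The main obstacle is precisely the bookkeeping in this last paragraph: tracking how the finite case splits in Theorem \ref{psisfunctions} and Corollary \ref{ssfunctions} propagate when the $s$-function parameter $\beta$ is itself a $\Psi^{n-1}$-definable element of $\Gamma$, and verifying that the resulting breakpoint and value functions are indeed $\Psi_\infty$-valued rather than merely $\Gamma_\infty$-valued. Once that bookkeeping is organized, the uniform bound on $m$ and the definability of the $\alpha_k$ and $c_k$ as functions into $\Psi_\infty$ fall out of the bounded branching of the Theorem \ref{psisfunctions}/Corollary \ref{ssfunctions} case analysis, and the induction closes.
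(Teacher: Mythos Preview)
Your base case and overall inductive strategy match the paper's, but the inductive step is organized quite differently. You propose a direct syntactic uniformization: re-run the term analysis behind Corollary~\ref{psitopsi} with the first $n-1$ coordinates treated as parameters, extract a uniform bound $m$, fixed shifts $l_k$, and definable breakpoint and value functions $\alpha_k,c_k:\Psi^{n-1}\to\Psi_\infty$, then feed those into the inductive hypothesis. The paper instead uses a soft model-theoretic trick (a variant of the argument in \cite{densepairs}, itself going back to \cite{stronglyminimal}): pass to an $\aleph_0$-saturated model, for each $a\in\Psi$ write $f_a:\Psi^{n-1}\to\Psi$ in $(\Psi;<)$ by induction, cover $\Psi$ by the $A$-definable sets $\Delta_a$ of $b$'s whose $f_b$ arises as a section of the same $(\Psi;<)$-definable family as $f_a$, extract a finite subcover by compactness, amalgamate into a single $(\Psi;<)$-definable $F$, and use definable Skolem functions (available by Corollary~\ref{ominimal}) to pick the section parameter as a function of $a$, which is then $(\Psi;<)$-definable by the one-variable case.

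What each buys: your route stays entirely within the explicit calculus of Theorem~\ref{psisfunctions} and Corollary~\ref{ssfunctions} and would, if carried out, give concrete bounds on the number and shape of pieces; but, as you already note, it demands a multivariate version of that calculus (the case splits in Theorem~\ref{psisfunctions} depend on quantities like $\psi(\beta)$ and $s(q^{-1}\beta)$, and once $\beta$ is itself a $\Q$-combination of $s$-shifts of several $a_i$'s you must in turn resolve those via Lemma~\ref{gencoeffsumlemma}, introducing further case distinctions on the relative order of the $s^{k_i}(a_i)$), and you must check that every breakpoint that emerges really lands in $\Psi_\infty$ rather than elsewhere in $\Gamma_\infty$. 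None of this is wrong, but it is genuine work beyond what is written. The paper's compactness-and-Skolem argument sidesteps all of that bookkeeping: it needs only the one-variable Corollary~\ref{psitopsi}, Corollary~\ref{ominimal} (for definable Skolem functions on $\Psi$), and saturation, and never touches a multivariate term computation.
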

\begin{proof}
We can arrange that $(\Gamma,\psi)$ is $\aleph_0$-saturated.

The case $n=1$ follows from Corollary~\ref{psitopsi}.

Let $n>1$. Let $A$ be the finite set of parameters from $\Gamma$ used to define $f$. For each $a\in\Psi$ we can define the function $f_a:x\mapsto f(a,x):\Psi^{n-1}\to\Psi$. By induction, $f_a$ is definable in the structure $(\Psi;<)$, so we have $c_a\in\Psi^{N_a}$ and a set $\Phi_a\subseteq \Psi^{N_a+(n-1)+1}$ definable in $(\Psi;<)$ such that $\Phi_a(c_a) = \Gamma(f_a)$. We can arrange that $\Phi_a$ is the graph of a function $F_a:\Psi^{N_a+(n-1)}\to\Psi$ such that $F_a(c_a,x) = f_a(x)$ for all $x\in\Psi$. Next let $\Delta_a\subseteq\Psi$ be the $A$-definable set of all $b\in\Psi$ such that the function $f_b:\Psi^{n-1}\to\Psi$ occurs as a section of $F_a$. Note that $a\in\Delta_a$ since $F_a(c_a,x) = f(a,x)$. Thus
\[
\Psi = \bigcup_{a\in\Psi} \Delta_a
\]
By saturation there are $a_1,\ldots,a_k\in\Psi$ such that:
\[
\Psi = \bigcup_{j=1}^k\Delta_j
\]
where $\Delta_j:=\Delta_{a_j}$ for $j=1,\ldots,k$. Let $F_j:=F_{a_j}$, $\Phi_j:=\Phi_{a_j}$, $c_{j}:=c_{a_j}$ and $N_j:=N_{a_j}$ for $j=1,\ldots,k$ and let $N = \max_{1\leq j\leq k}N_j$. Extend each function $F_j:\Psi^{N_j+(n-1)}\to\Psi$ to a function $F_j':\Psi^{N+(n-1)}\to\Psi$ by setting
\[
F_j'(w_1,\ldots,w_N,x):=F_j(w_1,\ldots,w_{N_j},x)\;\; \text{for all $(w_1,\ldots,w_N,x)\in\Psi^{N+(n-1)}$}
\]
so the last $N-N_j$ variables before $x$ are dummy variables. Next define a function $F:\Psi^{1+N+(n-1)}\to\Psi$ by
\[
F(v,w_1,\ldots,w_N,x) = \begin{cases}
F_1'(w_1,\ldots,w_N,x) & \text{if $v=s0$} \\
F_2'(w_1,\ldots,w_N,x) & \text{if $v=s^20$} \\
&\vdots  \\
F_{k-1}'(w_1,\ldots,w_N,x) & \text{if $v=s^{k-1}0$}\\
F_k'(w_1,\ldots,w_N,x) & \text{if $v\geq s^k0$} \\
\end{cases}
\]
Finally, we note the following:
\begin{eqnarray*}
\Psi = \textstyle\bigcup_{j=1}^n\Delta_j&\Rightarrow&\text{for every $a\in\Psi$ there is $j\in\{1,\ldots,k\}$ such that $a\in\Delta_j$} \\
&\Rightarrow& \text{for every $a\in\Psi$ there is $j\in\{1,\ldots,k\}$ and $c\in\Psi^{N_j}$} \\ &&\;\quad\quad\text{such that $f(a,x) = F_j(c,x)$ for every $x\in\Psi$} \\
&\Rightarrow& \text{for every $a\in\Psi$ there is $j\in\{1,\ldots,k\}$ and $c\in\Psi^{N}$} \\ &&\;\quad\quad\text{such that $f(a,x) = F_j'(c,x)$ for every $x\in\Psi$} \\
&\Rightarrow& \text{for every $a\in\Psi$ there is $v\in\Psi$ and $c\in\Psi^N$} \\ && \;\quad\quad\text{such that $f(a,x) = F(v,c,x)$ for every $x\in\Psi$} \\
&\Rightarrow& \text{for every $a\in\Psi$ there is $c\in\Psi^{1+N}$} \\ && \;\quad\quad\text{such that $f(a,x) = F(c,x)$ for every $x\in\Psi$} \\
&\Rightarrow& \forall a\in\Psi \;\exists c\in\Psi^{1+N} \;\forall x\in\Psi (f(a,x) = F(c,x))
\end{eqnarray*}
By definability of Skolem functions, there is a definable function $c=(c_0,\ldots,c_N):\Psi\to\Psi^{1+N}$ such that
\[
\forall a\in\Psi\;\forall x\in\Psi \;(f(a,x) = F(c(a),x))
\]
From the base case of this lemma, we may assume that $c_i:\Psi\to\Psi$ is definable in $(\Psi;<)$ for $i=0,\ldots,N$. Thus $f(z,x):\Psi^n\to\Psi$ agrees with the function $F(c(z),x):\Psi^n\to\Psi$, which is definable in $(\Psi;<)$. This concludes the proof of the induction step.
\end{proof}

\begin{cor}
The subset $\Psi$ of $\Gamma$ is stably embedded in $(\Gamma,\psi)$.
\end{cor}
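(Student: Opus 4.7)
The plan is to reduce the statement about definable subsets to the already-proved statement about definable functions, via a standard ``characteristic function'' trick. Recall that stable embeddedness of $\Psi$ in $(\Gamma,\psi)$ amounts to showing: every set $D\subseteq\Psi^n$ that is definable in $(\Gamma,\psi)$ with parameters from $\Gamma$ is, in fact, definable with parameters from $\Psi$, and indeed in the reduct $(\Psi;<)$.

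The idea is as follows. Given a definable $D\subseteq \Psi^n$, I would define an associated function $f_D:\Psi^n\to\Psi$ by
\[
f_D(x) = \begin{cases} s0 & \text{if } x\in D, \\ s(s0) & \text{if } x\notin D. \end{cases}
\]
Since $s0$ and $s(s0)$ are $\emptyset$-definable constants with $s0<s(s0)$ in $\Psi$ (using Lemma~\ref{successorincreasing} and Lemma~\ref{s0lemma}), $f_D$ is definable in $(\Gamma,\psi)$ using exactly the parameters needed to define $D$. Then Theorem~\ref{psintopsi} applies and tells us that $f_D$ is definable in the structure $(\Psi;<)$, hence in particular with parameters from $\Psi$. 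Recovering $D$ as the preimage
\[
D = \{x\in\Psi^n : f_D(x) = s0\}
\]
then exhibits $D$ as definable in $(\Psi;<)$ with parameters from $\Psi$, which is exactly the conclusion required.

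The main difficulty has already been disposed of in Theorem~\ref{psintopsi}; the remaining work is essentially bookkeeping. The only minor subtlety is making sure one works with the correct formulation of stable embeddedness: namely that the induced structure on $\Psi$ (collecting all $(\Gamma,\psi)$-definable subsets of $\Psi^n$) coincides with the structure definable in the reduct $(\Psi;<)$ using parameters from $\Psi$. The construction above gives the nontrivial inclusion; the reverse inclusion is immediate since $<$ is part of the language. I do not foresee any substantive obstacle beyond invoking the previous theorem cleanly.
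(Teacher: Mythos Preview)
Your proposal is correct and is precisely the standard way to pass from Theorem~\ref{psintopsi} to the stable embeddedness of $\Psi$; the paper itself gives no explicit argument for this corollary, treating it as immediate from Theorem~\ref{psintopsi}, and your characteristic-function trick is exactly the intended bridge.
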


\section{Final Remarks}

\noindent
\emph{In this section we assume that $(\Gamma,\psi)$ is a model of $T_{\log}$}. In contrast to the o-minimality of $\Psi$ (Corollary~\ref{ominimal}), it is important to note that $(\Gamma,\psi)$ is not even weakly o-minimal because the definable set $\Psi\subseteq\Gamma$ is infinite and discrete. In fact, $(\Gamma,\psi)$ is not even \emph{locally o-minimal} (in the sense of~\cite{localominimality}) because the definable set $(\Psi-\Psi)^{>0}\subseteq\Gamma$ does not have the local o-minimality property at $0$.

\medskip\noindent However, $(\Gamma,\psi)$ is ``o-minimal at infinity'' in the following sense:

\begin{lemma}
If $X\subseteq\Gamma$ is definable in $(\Gamma,\psi)$, then there is $a\in\Gamma$ such that $(a,\infty)\subseteq X$ or $(a,\infty)\cap X = \emptyset$.
\end{lemma}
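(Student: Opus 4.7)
The plan is to reduce to quantifier-free formulas via Theorem~\ref{qe}: any definable $X\subseteq\Gamma$ is given by a quantifier-free $L$-formula with parameters from $\Gamma$, i.e., a Boolean combination of atomic formulas $t_1(x)=t_2(x)$ and $t_1(x)<t_2(x)$ in $L$-terms $t_1,t_2$. Since the property ``eventually true on $(a,\infty)$ or eventually false on $(a,\infty)$ for some $a\in\Gamma$'' is closed under Boolean combinations (take the maximum of the finitely many thresholds), it suffices to establish this eventual dichotomy for each atomic formula.

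The workhorse is the following claim, which I would prove by induction on term complexity: for every $L$-term $t(x)$ in one variable with parameters from $\Gamma$, there exist $a\in\Gamma$, $q\in\Q$, and $c\in\Gamma_\infty$ such that $t(x)=qx+c$ for all $x\in\Gamma$ with $x>a$, where we set $qx+\infty:=\infty$. The base cases and the inductive steps for $+$, $-$, and $\delta_n$ are immediate. For the composite cases $t=\psi(t_1)$, $t=s(t_1)$, and $t=p(t_1)$, the inductive hypothesis gives $t_1(x)=q_1x+c_1$ for $x>a_1$; if $c_1=\infty$ or $q_1=0$, then $t_1$ is eventually constant and any composition is too. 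The substance is in the case $q_1\neq 0$: then $|t_1(x)|$ grows without bound as $x$ grows in $\Gamma$, and one applies three eventual identities valid for all $y\in\Gamma$:
\begin{itemize}
\item $\psi(y)=s0$ whenever $|y|>s0$: by AC2 and HC, $\psi(y)=\psi(|y|)\leq \psi(s0)=s0$ (using Lemma~\ref{s0lemma}), and $\psi(y)\geq \min\Psi=s0$.
\item $s(y)=s0$ whenever $y>2s0$ or $y<0$: by the Fixed Point Identity (Lemma~\ref{fixedpointidentity}), $s(y)=s0$ is equivalent to $\psi(y-s0)=s0$, and $|y-s0|>s0$ in either case.
\item $p(y)=\infty$ whenever $|y|\geq 2s0$: since $2s0=s0+\psi(s0)\in (\Gamma^{>})'$ and $\Psi<(\Gamma^{>})'$, we get $\Psi<2s0$, so any such $y$ lies outside $\Psi^{>s0}$ and $p(y)=\infty$ by definition.
\end{itemize}
For $q_1\neq 0$, plugging $y=t_1(x)$ with $x$ sufficiently large gives $\psi(t_1(x))=s0$, $s(t_1(x))=s0$, and $p(t_1(x))=\infty$, which completes the induction.

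Given the claim, for $x>a$ large enough, any atomic formula becomes a comparison of two expressions each of the form $qx+c$ or $\infty$. Comparisons involving $\infty$ have constant truth value; comparisons $q_1x+c_1=q_2x+c_2$ have at most one solution in the divisible ordered abelian group $\Gamma$; and comparisons $q_1x+c_1<q_2x+c_2$ are equivalent to a tail condition on $x$. In each case, by further enlarging $a$, the atomic formula is constantly true or constantly false on $(a,\infty)$. Closing under Boolean combinations completes the proof.

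The main obstacle is essentially bookkeeping: verifying the three eventual identities for $\psi$, $s$, and $p$, and then running the induction. No deep new ideas are required beyond the observation that $\psi$, $s$, and $p$ all ``collapse'' on elements of sufficiently large magnitude.
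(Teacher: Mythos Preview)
Your proposal is correct and follows essentially the same approach as the paper: both reduce via QE to analyzing $L$-terms in one variable and show by induction on term complexity that every term is eventually affine (i.e., of the form $qx+c$ with $c\in\Gamma_\infty$), using that $\psi$, $s$, $p$ collapse to constants on inputs of large magnitude. Your explicit thresholds ($|y|>s0$ for $\psi$, $y>2s0$ or $y<0$ for $s$, $|y|\ge 2s0$ for $p$) are a welcome sharpening of the paper's more casual ``eventually constant'' statements, and framing the argument directly at the level of atomic formulas rather than via piecewise-by-terms definable functions is a minor stylistic difference only.
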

\noindent
This is immediate from the following claim:
\begin{claim}
Let $F:\Gamma\to\Gamma_{\infty}$ be a definable function. Then there is some $a\in\Gamma$ such that on the restriction $(a,\infty)$, $F$ is either constant, or of the form $x\mapsto qx+\beta$ for $q\in\Q^{\neq}$ and $\beta\in\Gamma$.
\end{claim}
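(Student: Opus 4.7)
\medskip\noindent
The plan is to deduce the Claim from quantifier elimination (Theorem~\ref{qe}) together with the fact that $T_{\log}$ has a universal axiomatization (Lemma~\ref{universalaxiomatizationforT}). These imply that any definable $F\colon \Gamma\to\Gamma_\infty$ is given piecewise by $L$-terms $t(x,\bar b)$ on finitely many cells cut out by quantifier-free $L$-formulas in $x$ with parameters $\bar b\in\Gamma$. So the Claim reduces to showing: for any $L$-term $t(x,\bar b)$, there is some $a\in\Gamma$ such that for all $x>a$, $t(x,\bar b)$ is either a constant in $\Gamma_\infty$ or equal to $qx+\beta$ for some $q\in\Q^{\neq}$ and $\beta\in\Gamma$. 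Indeed, once this is established, each atomic formula of the form $t_1=t_2$ or $t_1<t_2$ is eventually true or eventually false (two distinct affine functions of $x$ agree at most at a single point), so on some tail $(a,\infty)$, $F$ coincides with a single $L$-term of the required form.

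\medskip\noindent
The main inductive step for the above reduction uses three eventual-constancy facts for $\psi$, $s$, $p$, valid in any model of $T_{\log}$:
\begin{itemize}
\item If $|y|>s0$, then $\psi(y)=s0$. (Apply HC to $0<s0\leq |y|$ to obtain $\psi(y)=\psi(|y|)\leq\psi(s0)=s0$, using that $s0$ is the unique fixed point of $\psi$ by Lemma~\ref{s0lemma}; combine with $\psi(y)\geq \min\Psi=s0$.)
\item If $|y|>2s0$, then $s(y)=s0$. (Then $|y-s0|>s0$, so the previous bullet gives $\psi(y-s0)=s0$, whence $s(y)=s0$ by the Fixed Point Identity, Lemma~\ref{fixedpointidentity}.)
\item If $|y|>2s0$, then $p(y)=\infty$. (AC3 applied to $s0>0$ yields $\Psi<(s0)'=2s0$, so $y\notin\Psi^{>s0}$.)
\end{itemize}

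\medskip\noindent
With these in hand, I would induct on $L$-term complexity to show every term $t(x,\bar b)$ is eventually constant or of the form $qx+\beta$ with $q\neq 0$. The base cases ($x$ and constants in $\Gamma_\infty$) and the cases $+$, $-$, $\delta_n$ clearly preserve this class. For $\psi(t')$, $s(t')$, $p(t')$: if $t'$ is eventually constant, then so are the compositions; and if $t'(x)=qx+\beta$ with $q\neq 0$, then $|t'(x)|\to\infty$ as $x\to\infty$, so eventually $|t'(x)|>2s0$, whereupon the three bullets above force $\psi(t'(x))$, $s(t'(x))$, $p(t'(x))$ to equal the constants $s0$, $s0$, $\infty$ respectively on a tail of $\Gamma$. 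I do not anticipate a major obstacle; the only small point of care is that the cleanest route to the second bullet uses the Fixed Point Identity rather than a direct calculation with $\int$.
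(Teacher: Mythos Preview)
Your proposal is correct and follows essentially the same approach as the paper: reduce via QE and universal axiomatization to analyzing $L$-terms on a tail of $\Gamma$, then induct on term complexity, using that $\psi$, $s$, $p$ become constant once the argument is sufficiently large in absolute value. Your version is in fact slightly more careful than the paper's, since you explicitly justify why one piece of the quantifier-free definition contains a tail (by showing each atomic formula is eventually decided), and you give concrete thresholds ($|y|>s0$ for $\psi$, $|y|>2s0$ for $s$ and $p$) where the paper simply asserts eventual constancy.
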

\begin{proof}
By quantifier elimination and universal axiomatization of $T_{\log}$, it follows that $F$ is given piecewise by $L$-terms. In particular, there is some $a\in\Gamma$ such that $F$ is equal to an $L$-term $t(x)$ on $(a,\infty)$. Thus we can prove this by induction on the complexity of $t(x)$.

The cases $t(x) = \beta$ for some $\beta\in\Gamma_{\infty}$ is clear since this is already a constant function. The cases $t(x) = t_1(x)+t_2(x)$, $t(x) = -t_1(x)$, $t(x) = \delta_nt_1(x)$ are also clear. 

If $t(x)$ is constant on $(b,\infty)$ for some $b\in\Gamma$, then so are $\psi(t(x)), s(t(x))$ and $p(t(x))$. If $t(x)$ is $qx+\beta$ on $(b,\infty)$, then $t(x)$ is either strictly increasing and cofinal in $\Gamma$, or strictly decreasing and coinitial in $\Gamma$. Thus $\psi(t(x))$ and $s(t(x))$ will eventually be the constant value $s0$ and $p(t(x))$ will eventually be the constant value $\infty$.
\end{proof}

\medskip\noindent
We conclude with a list of unresolved issues and things left to do:

\begin{enumerate}[(1)]
\item Describe more explicitly the subsets of $\Gamma$ that are definable in $(\Gamma,\psi)$.
\item Describe all definable functions $\Gamma\to\Gamma_{\infty}$.
\item Describe all possible simple extensions $\Gamma\preceq\Gamma\langle c\rangle$.
\item Does $T_{\log}$ have NIP (the Non-Independence Property)?
\item Is $T_{\log}$ \emph{distal}? Distal theories form a subclass of NIP theories which in some sense are purely unstable. See~\cite{distal} for a definition of distality.
\item Is $(\Gamma,\psi)$ \emph{quasi-weakly-o-minimal}, i.e., any definable subset is a finite boolean combination of convex sets and $0$-definable sets? For more information on this property in the o-minimal setting, see~\cite{quasiominimal}.
\item Is $(\Gamma,\psi)$ \emph{d-minimal}, i.e., any definable subset of $\Gamma$ is a union of an open set and finitely many discrete sets? See~\cite[\S3.4]{dminimal} for a discussion of d-minimality in the context of expansions of the real field.
\end{enumerate}

\noindent
Items (3) and (4) are addressed in a future paper,~\cite{gehretNIP}.

\section*{Acknowledgements}

\noindent
This material is based upon work supported by the National Science Foundation under Grant No. 0932078000 while the author was in residence at the Mathematical Sciences Research Institute in Berkeley, California, during the Spring 2014 semester. The author would like to thank the referee for the very careful reading of the manuscript and many helpful suggestions, and would also like to thank Lou van den Dries for his guidance and numerous discussions around the topics of this paper.

\bibliographystyle{amsalpha}	
\bibliography{refs}

\end{document}